\newtheorem{theorem}{Theorem}[section]
\newtheorem{lemma}[theorem]{Lemma}
\newtheorem{corollary}[theorem]{Corollary}
\newtheorem{proposition}[theorem]{Proposition}
\theoremstyle{definition}
\newtheorem{remark}[theorem]{Remark}
\newcommand{\restrict}{\,{\mathbin{\vert\mkern-0.3mu\grave{}}}\,}
\newcommand{\luk}{\L u\-ka\-s\-ie\-wicz}
\newcommand{\remove}[1]{}
\DeclareMathOperator{\Rn}{{\mathbb R^{\it n}}}
\DeclareMathOperator{\maxspec}{\boldsymbol{\mu}}
\DeclareMathOperator{\Maxspec}{{\rm Maxspec}}
\DeclareMathOperator{\spec}{{\rm Spec}}
\DeclareMathOperator{\McN}{\mathcal M}
\DeclareMathOperator{\McNn}{\mathcal M([0,1]^{\it n})}
\DeclareMathOperator{\interval}{[0,1]}
\DeclareMathOperator{\cube}{[0,1]^{\it n}}
\DeclareMathOperator{\range}{\rm range}
\DeclareMathOperator{\den}{\rm den}
 \title[Hopfian MV-algebras, unital $\ell$-groups, AF C$^*$-algebras]
{Hopfian $\ell$-groups, MV-algebras and AF C$^*$-algebras}
\author{Daniele Mundici}
\address[D. Mundici]{Department of
Mathematics and Computer Science  ``Ulisse Dini'' \\
University of Florence\\
Viale Morgagni 67/A \\
I-50134 Florence \\
Italy}
\email{ mundici@math.unifi.it }
\date{\today}
\begin{document}

\thanks{2000 {\it Mathematics Subject Classification.}
Primary:    06F20. Secondary:  06D35,  
08A10, 08A35, 08B20, 08B30, 08C05, 18A20, 18C05, 
19A49, 20M05, 46L05, 46L80, 46M40, 52B55, 57Q05, 57Q25.}

\keywords{Hopfian algebra, residually finite,
 unital $\ell$-group, MV-algebra, $\ell$-group, $\Gamma$ functor, 
abelian lattice-ordered group, piecewise linear function, hull kernel topology,
Yosida duality,  Baker-Beynon duality, germ of a function, 
AF C$^*$-algebra, Bratteli diagram, 
Farey-Stern-Brocot C$^*$-algebra, Elliott classification,
Grothendieck $K_0$ functor,  residually finite dimensional,
Effros-Shen C$^*$-algebra, 
Behnke-Leptin
C$^*$-algebra}

\begin{abstract}
An algebra  is said to be {\it hopfian\/} 
if it is not isomorphic to a proper
quotient of itself. 
We describe several classes of 
hopfian and of non-hopfian   
unital lattice-ordered abelian groups
and MV-algebras. Using Elliott classification and
$K_0$-theory,  we apply our results to
 other related structures, notably
the Farey-Stern-Brocot AF C$^*$-algebra and all its primitive
quotients, including 
%the Effros-Shen
%C$^*$-algebras  $\mathfrak F_\theta$, 
the  Behnke-Leptin  C$^*$-algebras  $\mathcal A_{k,q}$. 
\end{abstract}

\maketitle

%%%%%%%%%%%%%%%%%%%%%%%%

\section{Introduction}
Since the publication of 
 \cite{hop} and \cite{mal} the literature on hopfian
 algebras and spaces has expanded rapidly.
 The survey
  \cite{var1} may give an idea of the applicability of
  the notion of hopficity and of the various methods used
  to prove that   structures have or do not
  have the hopfian property.

Also  the literature on the  equivalent categories of MV-algebras
 and unital $\ell$-groups
 has been steadily expanding over the last
 thirty years. See 
 \cite{buscabmun, cab-ja, cab-forum, 
 cabmun,  cabmun-ccm,  carrus, gehgoomar}
 for a selection of
  recent papers, and the monographs  \cite{cigdotmun, mun11}
  for a detailed account on the relationships between these
  structures and rational polyhedra, AF C$^*$-algebras, 
  Grothendieck topoi, Riesz spaces, multisets, etc.

 Remarkably enough, the literature on hopfian MV-algebras, 
 (unital as well as non-unital)   $\ell$-groups and
 C$^*$-algebras is virtually nonexistent.
 The aim of this paper is to give a first account
 of the depth and  multiform beauty  of
 this theory,  with its own  geometric, algebraic,
 and  topological   techniques.  We will apply our results
 to the Farey-Stern-Brocot AF C$^*$-algebra $\mathfrak M_1$,
 \cite{mun-adv,boc}, 
 and all its quotients, including 
% the Effros-Shen
%C$^*$-algebras  $\mathfrak F_\theta$, \cite{eff}, and
the  Behnke-Leptin  C$^*$-algebras  $\mathcal A_{k,q}$,
\cite{behlep}.   
 
 \smallskip
 
 We recall that 
 a {\it unital $\ell$-group} is an abelian
 group $G$ equipped with a translation invariant
 lattice-order and a distinguished {\it archimedean}
  element $u$, called the (strong, order) {\it unit}.
  In other words,  every $0\leq x\in G$ is dominated by
 some integer multiple of $u$. A {\it homomorphism}   
 of unital $\ell$-groups preserves the unit as well as the group and
 the lattice structure.
 
 An {\it MV-algebra} is an involutive
 abelian monoid $A=(A,0,\neg,\oplus)$
 satisfying the equations  $x\oplus \neg 0=\neg 0$ and
 $\neg(\neg x\oplus y)\oplus y = 
 \neg(\neg y \oplus x)\oplus x.$
 Equivalently, by Chang completeness theorem 
 \cite[Theorem 2.5.3]{cigdotmun},  
 $A$ satisfies all
 equations satisfied by the real unit interval $[0,1]$
 equipped with the operations $\neg x=1-x$ and
 $x\oplus y=\min(1,x+y)$.
 Boolean algebras coincide with MV-algebras
 satisfying the equation $x\oplus x=x.$

  In
  \cite[Theorem 3.9]{mun-jfa} a natural  equivalence
  $\Gamma$ is established between 
  unital $\ell$-groups and MV-algebras.

\smallskip
A  (universal) algebra $R$
 is {\it residually finite}  if for any $x \not= y \in R$ there
is a homomorphism $h$  of $R$ 
into a finite algebra such that $h(x)\not=h(y)$.
%In other words, the congruences  $\theta$
%with finite quotient  $R/\theta$ have the trivial 
%congruence $\{(x,x)\mid x\in R\}$ as their intersection.
This general notion is investigated 
 in Malcev's book  \cite[p. 60]{mal-book},
with the name ``finite approximability''.

\smallskip
The following result generalizes 
a  group-theoretic theorem  due to  Malcev
\cite{mal}:
\begin{theorem}
\cite[Theorem 1]{eva}, 
\cite[Lemma 6, p. 287]{mal-book}  %  \cite[p. 167]{blaneu}
\label{theorem:evans}
\,\,Every finitely  generated residually finite
algebra $R$ is  {\em hopfian}, meaning that
every 
homomorphism  of   $R$ onto $R$ is 
an automorphism.
\end{theorem}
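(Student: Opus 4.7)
The plan is to run the standard pigeonhole argument on homomorphisms of $R$ into a fixed finite algebra. Let $h\colon R \to R$ be a surjective homomorphism. I would argue by contradiction: suppose $h$ is not injective, so there exist $x\ne y$ in $R$ with $h(x)=h(y)$. By residual finiteness, pick a homomorphism $\varphi\colon R\to F$ with $F$ finite and $\varphi(x)\ne \varphi(y)$.

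The key finiteness ingredient is that, since $R$ is finitely generated (say by $g_1,\dots,g_n$), any homomorphism from $R$ into $F$ is determined by the images of $g_1,\dots,g_n$, so there are at most $|F|^n$ homomorphisms $R\to F$. Now consider the infinite list
\[
 \varphi,\ \varphi\circ h,\ \varphi\circ h^2,\ \varphi\circ h^3,\ \ldots
\]
By pigeonhole there are indices $i<j$ with $\varphi\circ h^i = \varphi\circ h^j$.

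Next I would use surjectivity to cancel: since $h$ is onto, so is every iterate $h^i$, and two maps that agree after post-composition with a surjection must agree. Hence $\varphi = \varphi\circ h^m$ where $m=j-i\ge 1$. Finally, from $h(x)=h(y)$ one gets $h^m(x)=h^m(y)$, and applying $\varphi$ together with the identity $\varphi=\varphi\circ h^m$ yields
\[
 \varphi(x)=\varphi(h^m(x))=\varphi(h^m(y))=\varphi(y),
\]
contradicting the choice of $\varphi$. Thus $h$ is injective, hence an automorphism.

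I do not anticipate a serious obstacle: the argument is purely combinatorial once one has the two ingredients (finitely many homomorphisms into any fixed finite algebra, and cancellability of surjections on the right). The only point worth double-checking is the universal-algebraic version of the cancellation step, which is immediate: if $h^i\colon R\to R$ is surjective and $\alpha\circ h^i=\beta\circ h^i$, then for every $r\in R$ one writes $r=h^i(s)$ and reads off $\alpha(r)=\beta(r)$. Everything else is bookkeeping.
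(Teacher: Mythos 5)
Your argument is correct and complete; it is in fact the standard pigeonhole proof of this result. The paper itself gives no proof of Theorem \ref{theorem:evans} --- it is quoted from Evans and from Malcev's book --- and your argument (finitely many homomorphisms into a fixed finite algebra because a homomorphism is determined by its values on a finite generating set, pigeonhole on the sequence $\varphi\circ h^k$, right-cancellation of the surjection $h^i$ to get $\varphi=\varphi\circ h^m$, and the resulting contradiction with $\varphi(x)\ne\varphi(y)$) is precisely the classical one found in those sources.
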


In Theorem \ref{theorem:residually-finite}
finitely generated residually finite MV-algebras
will be shown to coincide
with semisimple MV-algebras
whose finite rank
maximal ideals  are dense in their maximal spectral space.

Then as a particular case of Theorem \ref{theorem:evans}, 
 every finitely generated semisimple MV-algebra 
whose finite rank
maximal ideals  are dense  is hopfian, (Corollary \ref{corollary:epi}).
Corollary \ref{corollary:epi-bis} yields the counterpart
of this result for  unital $\ell$-groups: 
if $(G,u)$ is finitely generated
and semisimple, and the finite rank maximal ideals of $G$
are dense in the maximal spectral space of $G$, then
$(G,u)$ is hopfian.

%In the category of 
%unital $\ell$-groups,
%epimorphisms form a strictly larger class than
%surjective  homomorphisms, i.e.,  regular unital epimorphisms, 
% \cite[p.1321]{cab-forum}.
%Similarly, by  Theorem \ref{theorem:gamma},
% in the  category of MV-algebras,
%epimorphisms form a strictly larger class than
%surjective homomorphisms.
%%
%Thus   
%Theorem \ref{theorem:epi} is independent of (and stronger than)
%the MV-algebraic case of
%Theorem \ref{theorem:evans}. 

In Section \ref{section:other} we show that
several classes of MV-algebras have the
hopfian property. This is the case of 
finitely presented MV-algebras,
finitely generated projective, and in particular,
finitely generated free MV-algebras.
Further,   finitely
 generated MV-algebras with  finite prime spectrum
 are  hopfian, as well as MV-algebras whose
 maximal spectral space is a manifold without boundary,
 (Corollary \ref{corollary:hopfians} and Theorems
 \ref{theorem:germ}, 
\ref{theorem:manifold}).
Theorem \ref{theorem:non-hopf} provides notable
examples of non-hopfian MV-algebras.
Corollary \ref{corollary:seven} yields hopfian and
non-hopfian examples of MV-algebras satisfying any
two of the three conditions of being finitely generated,
  semisimple, and having a dense set of finite rank maximal ideals,
   along with the negation of the third
 condition.
 
Each MV-algebraic result has an equivalent
counterpart for unital $\ell$-groups, via the categorical equivalence
$\Gamma$  (see Corollaries
\ref{corollary:epi-bis},
\ref{corollary:finpres-group},
\ref{corollary:germ-group},
\ref{corollary:nonhopf-group},
\ref{corollary:seven}).  
%Preservation properties are touched upon  
%in Proposition \ref{proposition:preservation}.

Sections \ref{section:applications1}
and \ref{section:applications2}  
are devoted to applications outside the domain of
MV-algebras and unital $\ell$-groups. In 
Corollary 
\ref{corollary:hopfian-freel}   finitely generated free  $\ell$-groups
(without a distinguished unit) are shown to be hopfian.  
Let $\mathfrak M_1$ be the 
Farey-Stern-Brocot  AF C$^*$-algebra  introduced in 1988, 
 \cite{mun-adv},  
  and recently rediscovered and
  renamed $\mathfrak A$, 
\cite{boc, eck}.
 Theorem \ref{theorem:res-fin-dim} shows that
$\mathfrak M_1$
has a separating family of finite dimensional representations,
(i.e., $\mathfrak M_1$ is {\it residually finite dimensional\/}).
%, \cite{goomen}).
Composing  the $\Gamma$ functor with  Grothendieck
$K_0$, in Theorem
\ref{theorem:af-hopf} we show that $\mathfrak M_1$
is hopfian. The proof relies on  
Elliott classification and Bott periodicity theorem, \cite{eff}.
As shown in Corollary \ref{corollary:quotients},  the
 hopfian property is inherited
by all primitive quotients of $\mathfrak M_1$, 
including 
%the Effros-Shen
%C$^*$-algebras  $\mathfrak F_\theta$  and
the  Behnke-Leptin  C$^*$-algebras  $\mathcal A_{k,q}$.

We refer to \cite{cigdotmun, mun11} for background on
 MV-algebras,  to  \cite{andfei} and \cite{bkw}
 for unital $\ell$-groups, and to \cite{eff} for
 AF C$^*$-algebras, $K_0$ and Elliott classification.  Unless otherwise
specified, all MV-algebras and all unital $\ell$-groups in this
 paper are nontrivial.  To help the reader,
 in Section \ref{section:appendix} we record the
 most basic MV-algebraic theorems used 
throughout Sections \ref{section:two}-\ref{section:applications2}. 
 
\noindent 
%%%%%%%%%%%%%%%%%%%%%%%%%%
\section{Characterizing residually finite MV-algebras and unital $\ell$-groups}
\label{section:two}
%%%%%%%%%%%%%%%%%%%%%%%%

{\it The spectral topology of an MV-algebra, \cite[\S 4]{mun11}.}
Unless otherwise specified, all ideals considered in this paper
are proper.  For any MV-algebra $A$ we  let $\spec(A)$ denote
 the space  of its
{\it prime} ideals---those ideals $\mathfrak p$ of $A$ such that
$A/\mathfrak p$ is an MV-chain. 
Following   \cite[Definition 4.14]{mun11},  
$\spec(A)$ comes equipped
 with the  {\it spectral}, or {\it hull kernel}
  topology: its closed sets have the form  
$\mathsf{F}_\mathfrak j = \{\mathfrak p\in \spec(A)\mid
\mathfrak p\supseteq \mathfrak j\}$, 
where  $\mathfrak j$  ranges over
ideals of $A$, plus the trivial ideal $A$.
The resulting topological space  is known as
the {\it (prime) spectral space} of $A$.

\smallskip
The  {\it (hull kernel)  topology} of the {\it maximal spectral space}
$\maxspec(A)\subseteq \spec(A)$  coincides with the
 topology inherited from ${\rm Spec}(A)$ by restriction.
The standard basis of  closed sets
of $\maxspec(A)$ is given by all
sets of the form  $\mathsf{B}_a=
\{\mathfrak m\in \maxspec(A)\mid a\in\mathfrak m\}$,
letting $a$ range over elements of $A$.
 
\smallskip
An MV-algebra  $A$ is   {\it semisimple} if the intersection
of its maximal ideals is $\{0\}$.
 A maximal ideal $\mathfrak m$ of $A$ is said to have 
 a {\it finite rank}
if the quotient MV-algebra
$A/\mathfrak m$ is finite  (whence
$A/\mathfrak m$ is automatically a finite
MV-chain).

For any compact Hausdorff space $X$ and
MV-algebra  $B$ of continuous functions on $X$,
the maximal ideal $\mathfrak m_x$ of $B$  is  defined by 
 $  \mathfrak m_x=\{f\in B
\mid f(x)=0\}$.

\bigskip
\noindent
{\it Polyhedral topology, \cite{sta}, \cite[\S\S 2-3]{mun11}.}
Let $n=1,2,\ldots $. A set
 $P \subseteq \mathbb R^n$  is
 said to be a {\it polyhedron}  if it is a finite union
 of  closed  simplexes  $S_i\subseteq \mathbb R^n$.
  $P$ need not be convex, nor connected; the
 simplexes    $S_i$   need not have the same dimension.
If  we can choose each $S_i$ 
 with rational vertices then $P$
 is said to be a {\it rational polyhedron}.
 
\smallskip 
  A {\it McNaughton function on}  $\cube$ 
  is a  $\interval$-valued continuous function $f$ 
together with (affine)  linear  polynomials
 $p_1,\ldots,p_u$ with integer coefficients such that
 for each $x\in \cube$ there is $j\in\{1,\ldots,u\}$ satisfying 
 $f(x)=p_j(x).$
 
 \smallskip 
 As a first generalization, for any integers  $n,m>0$ and 
 rational polyhedron
 $P\subseteq \interval^n$,
 a function  $f\colon P\to \interval^m$
 is said to be a   {\it polyhedral \,\,$\mathbb Z$-map}
 if there are McNaughton functions
 $f_1,\dots,f_t$ defined on
$ \interval^n$ 
 such that $f(x)=(f_1(x),\ldots,f_m(x))$  for each  $x\in P$.
% If $f$ is one-one and also $f^{-1}$ is a
% $\mathbb Z$-map, then $f$ is said to be
% a {\it $\mathbb Z$-homeomorphism.} 

\smallskip 
More generally,  given nonempty closed sets
$X\subseteq \interval^{n}$ and $Y\subseteq \interval^m$,
a map $g \colon X\to Y$ is called a {\it $\mathbb{Z}$-map} if
 there exist rational polyhedra  $X\subseteq P\subseteq
 \interval^n$ and $Y\subseteq Q\subseteq \interval^m$,
 and a polyhedral
 $\mathbb{Z}$-map $f \colon P\rightarrow Q$
  such that $g=f\restrict {X}=$ the restriction of $f$ to $X$.
We denote by   $\McN(X)$   the MV-algebra of
  $\mathbb Z$-maps $f\colon X \to \interval$, and 
  say that any such $f$ is a {\it McNaughton function on} $X$.
 
%  The functorial properties of $\mathcal M$ are
%  detailed in  Theorem \ref{theorem:duality}. 
  
    \smallskip
For every rational point  $r\in \mathbb R^n,\,$
$\den(r)$  denotes the least common
denominator of the coordinates of $r$.
We say that $\den(r)$ is the  {\it denominator} 
of $r$.

\begin{theorem}
\label{theorem:residually-finite}
For any finitely generated MV-algebra $A$ the following conditions are
equivalent:
\begin{itemize}
\item[(i)]
A is residually finite.

\smallskip
\item[(ii)]
$A$ is semisimple and its maximal ideals of  finite rank  
 form a dense subset of $\maxspec(A)$.
\end{itemize}
\end{theorem}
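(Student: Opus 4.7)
The starting point is to rephrase residual finiteness ideal-theoretically. Every finite MV-algebra is a finite product of finite MV-chains, so a homomorphism of $A$ onto a finite MV-algebra has kernel of the form $\mathfrak m_1 \cap \cdots \cap \mathfrak m_r$ with each $\mathfrak m_i$ a maximal ideal of finite rank. Combined with the MV-difference $\delta(x,y) = (x \ominus y) \oplus (y \ominus x)$ (which vanishes iff $x = y$), this shows that $A$ is residually finite if and only if the intersection of all finite-rank maximal ideals of $A$ equals $\{0\}$. From this reformulation, the direction (ii)$\Rightarrow$(i) is immediate: given $0 \neq a \in A$, semisimplicity provides a maximal ideal $\mathfrak m \not\ni a$, so $\mathfrak m$ lies in the open set $\maxspec(A) \setminus \mathsf B_a$; density then supplies a finite-rank maximal $\mathfrak n$ in this set, and the quotient map $A \to A/\mathfrak n$ separates $a$ from $0$.

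For (i)$\Rightarrow$(ii), semisimplicity is automatic from the reformulation. To prove density I would exploit the hypothesis that $A$ is \emph{finitely generated}: as a finitely generated semisimple MV-algebra, $A \cong \McN(X)$ for some closed $X \subseteq [0,1]^n$, via an isomorphism carrying $\maxspec(A)$ onto $X$ with its Euclidean topology under $\mathfrak m_x \leftrightarrow x$. Under this identification, $\mathfrak m_x$ has finite rank precisely when $x \in \mathbb Q^n$: if $\den(x) = d$ then every McNaughton function sends $x$ into the finite set $\tfrac{1}{d}\mathbb Z \cap [0,1]$, while conversely the coordinate projections $\pi_i \in \McN(X)$ force $x_i = \pi_i(x) \in \mathbb Q$ whenever $A/\mathfrak m_x$ is finite. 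The task thus reduces to showing that rational points are dense in $X$.

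Suppose, for contradiction, that they are not, so that some nonempty relatively open $\tilde U \cap X$ (with $\tilde U \subseteq [0,1]^n$ open) avoids $\mathbb Q^n$. Fix $x \in \tilde U \cap X$, choose a rational $n$-simplex $\sigma \subseteq \tilde U$ with $x$ in its topological interior, and produce a McNaughton function $f \colon [0,1]^n \to [0,1]$ supported in $\sigma$ with $f(x) > 0$; such an $f$ arises as a Schauder hat from a rational simplicial subdivision of $[0,1]^n$ sufficiently refined that some rational vertex has $x$ in the interior of its star and that this star sits inside $\sigma$. Then $f \restrict X$ is nonzero in $A = \McN(X)$ yet vanishes on every rational point of $X$ (any such point inside $\sigma$ would lie in $\tilde U \cap X$, against the choice of $\tilde U$), contradicting residual finiteness. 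The chief technical obstacle is this bump-function construction at a possibly irrational point of $X$ inside an arbitrarily small rational simplex, and it is the step that genuinely uses the finite generation of $A$ via the concrete $\McN(X)$ representation.
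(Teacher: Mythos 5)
Your proof is correct, and its geometric core---reducing density of finite-rank maximal ideals to density of rational points in $X\subseteq[0,1]^n$ and then, for the contradiction, manufacturing a nonzero element of $\McN(X)$ supported in a rational-point-free relatively open set---is the same as the paper's, which builds the bump function from \cite[Corollary 2.10]{mun11} (a McNaughton function vanishing exactly off a prescribed open rational simplex) where you use a Schauder hat over a sufficiently fine unimodular rational triangulation; both constructions are standard and interchangeable here. Where you genuinely diverge is in the packaging: you first prove the clean equivalence ``$A$ is residually finite iff the finite-rank maximal ideals intersect to $\{0\}$,'' using the decomposition of finite MV-algebras into products of finite chains together with Chang's distance $\delta(x,y)$ to reduce separation of pairs to separation from $0$. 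This buys you two simplifications the paper does not have: semisimplicity in (i)$\Rightarrow$(ii) becomes a one-line consequence (the intersection of all maximal ideals is contained in the intersection of the finite-rank ones), whereas the paper argues separately via infinitesimals and \cite[Proposition 3.6.4]{cigdotmun}; and your (ii)$\Rightarrow$(i) is a purely point-set argument in $\maxspec(A)$ (the open set $\maxspec(A)\setminus\mathsf B_a$ is nonempty by semisimplicity and must meet the dense set of finite-rank maximals), avoiding the paper's detour through a continuous extension $f$ of $a$ to the cube and a Euclidean neighborhood where $f$ does not vanish. The only points to tidy up are routine: the Schauder hat is a $\mathbb Z$-map only for a \emph{unimodular} rational triangulation, so you should invoke a unimodular refinement; and ``$x$ in the topological interior of $\sigma$'' must be read relative to $[0,1]^n$ when $x$ lies on the boundary of the cube (a technicality the paper's formulation shares). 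Neither affects correctness.
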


\begin{proof} 
For definiteness, let us assume that $A$ has $n$ generators,
$n=1,2,\ldots$.
If  $\psi$ is a homomorphism of $A$ into
a finite algebra $F$,  then 
$F$ is isomorphic to a finite product of finite
MV-chains  $C_1,\ldots,C_k$,\,\,\,
 \cite[Proposition 3.6.5]{cigdotmun}.
   For each $j=1,\ldots,k$ let $\gamma_j$ denote
 the $j$th projection map of $C_1\times\dots\times C_k$
onto $C_j$.  
Suppose  an element $b\in A$ does not belong to
$\ker(\psi).$
Then
$
\gamma_i(\psi(b))\not=0.
$
 for at least one  $i =1,\ldots,k$.
We then have the following well known result:
  {\it If an element $b$ of an MV-algebra $B$
is sent by a homomorphism to a nonzero element
of a finite MV-algebra $F$, then some homomorphism
of $B$ into a finite MV-chain $C$   
sends $b$ to a nonzero element of $C$.}

\smallskip
(i)$\Rightarrow$(ii) Evidently,  $A$ has no infinitesimal
$\epsilon$. For otherwise,
no homomorphism $\chi$ of $A$ into a finite
MV-algebra satisfies  $\chi(\epsilon)\not=0,$
and $A$ is not  residually finite---a contradiction.
%(Equivalently, no homomorphism $\Xi$ of $A$ onto a finite
%MV-chain would satisfy  $\Xi(\epsilon)\not=0,$
%and $A$ would not be residually finite.)
Thus by  \cite[Proposition 3.6.4]{cigdotmun},   $A$ is semisimple. 

We now consider the set of finite
rank ideals of $A$, 
with the intent of proving its denseness
 in the maximal spectral space $\maxspec(A)$.
Using Theorem \ref{theorem:representation}(iii), for 
some nonempty closed subset  $X$ of $\cube$ we first identify    $A$  
with the MV-algebra
 $\McN(X)$ of McNaughton functions on $X$.

Let $\iota\colon x\in X\mapsto \mathfrak m_x\in \maxspec(A)$ 
be the  homeomorphism  
 of $X$ onto $\maxspec(A)$ defined in
  Theorem \ref{theorem:semisimple}(iv). 
  Then  a
 maximal ideal $\mathfrak m\in \maxspec(A)$  has a finite
rank iff the quotient MV-algebra $A/\mathfrak m$ is finite
iff $A/\mathfrak m$ is a finite MV-chain  (because
$A/\mathfrak m$ is simple) iff the   point
$x_\mathfrak m = \iota^{-1}(\mathfrak m)\in X$ given by Theorem
\ref{theorem:semisimple}(vi)  is rational.

Our original task now amounts to proving
that  $X$ has a dense set of rational points.  
  Let us say that a point of $X$ is
{\it irrational} if not all its coordinates are rational.
Arguing by way of contradiction,  let  
$v\in X$ be an irrational point  together with
 an {\it open} rational $n$-simplex  $\mathcal N\ni v$
such that no rational point lies in 
  $\mathcal N\cap X$.
  By \cite[Corollary 2.10]{mun11}
some  McNaughton function $h\colon\cube\to [0,1]$
 vanishes precisely on the (closed)  rational polyhedron
  $\cube\setminus \mathcal N.$ 
Without loss of generality we may assume that the value
$h(v)$ is an irrational number.  
 The restriction $k=h\restrict X$ of $h$
to $X$ belongs to $A=\McN(X)$.
 
Let $\rho$ be a homomorphism  
of $A$ onto a  finite MV-chain.
By Theorem \ref{theorem:semisimple},
for some rational point  $z \in X$\,\,  the quotient
map  $f\in A\mapsto f/\mathfrak m_z$
coincides with $\rho$.
By definition of $\mathcal N$,  the point $z$ lies  in
$X\setminus \mathcal N$, whence  
$\rho(k)=k/\mathfrak m_z=k(z)=0.$
Thus for 
no  homomorphism  $\rho$ of $A$ onto a 
finite MV-chain 
we can have 
$\rho(k)\not=0$.  By the remark 
preceding the proof of (i)$\Rightarrow$(ii),  for  
 no  homomorphism  $\psi$ of $A$ onto a 
finite MV-algebra 
we can have 
$\psi(k)\not=0$.
Since $0\not= k(v)=h(v)\in\interval\setminus \mathbb Q,$  
this contradicts 
the  assumption that
$A$ is residually finite.
Thus $X$ has a dense set of rational points, and
the finite rank maximal ideals of $A$ are
dense in $\maxspec(A)$, as desired.

\smallskip
(ii)$\Rightarrow$ (i)
By Theorem \ref{theorem:representation}(iii) we can 
identify $A$ with $\McN(X)$  for some nonempty 
closed 
space $X\subseteq \cube$  homeomorphic to the
maximal spectral space  $\maxspec(A)$. 
For any nonzero $a\in A$    we will exhibit
  a homomorphism  $\sigma$ of $A$ into a finite MV-algebra,
  such that   $\sigma(a)\not=0$.   
  Let  $y\in X$ satisfy 
 $a(y)>0$. By definition,   
 $a$ is the restriction to $X$ of some McNaughton
 function  $f \colon \cube \to [0,1]$.
Since $f$ is continuous, for some
 open neighborhood $\mathcal R$ of $y$ in $\cube$,
$f$ never vanishes over $\mathcal R$.
With the notational stipulations following 
Theorem \ref{theorem:semisimple}, 
 the assumed 
density   in $\maxspec(A)$  of 
the set of finite rank maximal ideals
yields a rational point $r\in \mathcal R\cap X$ such that  
$f(r)=a(r)>0.$
Its  corresponding maximal
ideal  $\mathfrak m_r$ 
determines  the homomorphism  
$$\sigma\colon l\in A=\McN(X)\mapsto l/\mathfrak m_r
= l(r)\in \interval.$$
The range of $\sigma$
 is the set $V=\{l(r)\mid l\in \McN(X)\}$.
 Let $d$ be the
least common denominator of the coordinates
of $r$.  Since the linear pieces of every $l\in \McN(X)$
are (affine) linear  polynomials with integer
coefficients,  $V$ 
is an MV-subalgebra of the finite MV-chain   $C =\{0,\,1/d,\,\, 2/d,
\ldots, \,(d-1)/d,\,1\}$. (Actually, by McNaughton theorem,
\cite[9.1.5]{cigdotmun},  $V=C$.)
Thus $\sigma(a)=a(r)$ is a nonzero member of the finite
MV-algebra $V=\range(\sigma)$. 
In conclusion,  $A$ is residually finite.
\end{proof}

From 
Theorems \ref{theorem:evans}-\ref{theorem:residually-finite}
we  have:
 
\begin{corollary}
\label{corollary:epi}
Every finitely generated  semisimple  MV-algebra $A$
with a dense set of finite rank maximal ideals
%has the following  property:
%any {\em epimorphism} of $A$ into $A$ is an automorphism.
%A fortiori, $A$ 
 is hopfian.
\end{corollary}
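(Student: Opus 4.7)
The plan is to derive this corollary as an immediate consequence of the two preceding theorems. First, I would observe that the hypotheses placed on $A$ --- finitely generated, semisimple, and possessing a dense set of finite rank maximal ideals in $\maxspec(A)$ --- are exactly condition (ii) of Theorem \ref{theorem:residually-finite}. Applying that theorem, I conclude that condition (i) holds as well, namely that $A$ is residually finite.

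Having established that $A$ is a finitely generated residually finite algebra, I would then invoke Theorem \ref{theorem:evans} directly: under these two hypotheses every surjective endomorphism of $A$ is an automorphism, which is precisely the assertion that $A$ is hopfian.

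There is no real obstacle to surmount here, since the substantive work has already been carried out in the proof of Theorem \ref{theorem:residually-finite}, where the geometric representation $A \cong \McN(X)$ for some closed $X \subseteq \cube$ was used to convert the density of finite rank maximal ideals in $\maxspec(A)$ into density of rational points in $X$, and thereby into a supply of homomorphisms onto finite MV-chains separating the points of $A$. The present corollary is essentially a packaging of that characterization together with the general universal-algebraic Malcev--Evans principle, so the proof reduces to a single short paragraph citing Theorems \ref{theorem:residually-finite} and \ref{theorem:evans} in succession.
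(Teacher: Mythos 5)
Your proposal is correct and follows exactly the paper's own route: the corollary is stated there as an immediate consequence of Theorems \ref{theorem:evans} and \ref{theorem:residually-finite}, with the implication (ii)$\Rightarrow$(i) of the latter supplying residual finiteness and the Evans--Malcev theorem then yielding hopficity. Nothing further is needed.
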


%%%%%%%%%%%%%%%%%%%%%%%%%%
\section{Hopfian MV-algebras and unital $\ell$-groups}
\label{section:examples}

\medskip
\noindent{\it The spectral topology of a unital $\ell$-group.}
An  {\it ideal} of a unital $\ell$-group is
  the kernel of a  homomorphism
  of $(G,u)$.  (In \cite{bkw} ideals are called ``$\ell$-ideals''.)
  We  let $\spec(G,u)$ denote the set of {\it prime}  ideals of 
$(G,u)$, those ideals  $\mathfrak p$ such that
the quotient $(G,u)/\mathfrak p$ is totally ordered.
Unless otherwise specified, all ideals of $(G,u)$ in this paper will
be {\it proper}, i.e., different from $G$.
$\spec(G,u)$ comes   equipped
with the {\it (hull kernel) spectral topology},
whose   closed sets have the form
$\mathsf{F}_\mathfrak j = \{\mathfrak p\in \spec(A)\mid
\mathfrak p\supseteq \mathfrak j\}$,
letting  $\mathfrak j$ range over all 
ideals of $(G,u)$, plus the improper ideal $G$.
 
\smallskip
We  let $\maxspec(G,u)$ denote the set of  maximal
ideals of $(G,u)$  equipped with the topology
inherited  from
$\spec(G,u)$  by restriction.
A basis of closed sets for the {\it maximal
spectral  space} $\maxspec(G,u)$ is given by  the family of sets 
$\,\,\,\mathsf B_{a}
= \{\mathfrak m \in \maxspec(G,u) \mid a\in \mathfrak m \}$  
letting $a$ range over 
elements  of $G$. Owing to the existence of a unit
in $G$, \,\,\,
$\maxspec(G,u)$ is a nonempty compact Hausdorff space,
\cite[10.2.2]{bkw}.
From Theorem \ref{theorem:gamma}
one easily obtains a homeomorphism of 
$\maxspec(G,u)$ onto $\maxspec(\Gamma(G,u)).$

\smallskip
A maximal ideal $\mathfrak m$
of $(G,u)$ is said to have a {\it finite rank}
if the quotient $G/\mathfrak m$ is isomorphic to
the additive group $\mathbb Z$ with its natural order and the unit
coinciding with some integer $v>0$.
Equivalently,  $\mathfrak m\cap [0,1]$
is a finite rank maximal ideal of the MV-algebra
$\Gamma(G,u)$,  (see Theorem \ref{theorem:gamma}).

\smallskip

We say that $(G,u)$ is {\it semisimple}  if the intersection of its maximal
ideals
 is $\{0\}$. Equivalently,
the MV-algebra $\Gamma(G,u)$ is semisimple.
As is well known, $(G,u)$ is semisimple iff it is archimedean
iff it is  isomorphic to
a lattice ordered abelian group of real valued
functions over the compact Hausdorff space 
of its maximal ideals,
with the constant function 1 as the unit, 
 \cite[Corollaire 13.2.4]{bkw}.

 \medskip
 Evidently, there is no residually finite
unital $\ell$-group $(G,u)$ (except the trivial one,
where  $0=u$).
So Theorem \ref{theorem:evans}  has
no direct applicability to  $(G,u)$. 
Hovever, combining
Theorem \ref{theorem:gamma} 
with 
Theorems \ref{theorem:evans}-\ref{theorem:residually-finite}
we  have:

\begin{corollary}
\label{corollary:epi-bis}
Let $(G,u)$ be a finitely generated  semisimple  
unital $\ell$-group $(G,u)$ whose maximal ideals
of finite rank are dense in the maximal spectral space 
$\maxspec(G,u)$.
%Then 
%every epimorphism of $(G,u)$ into $(G,u)$ is an 
% automorphism.
%A fortiori, 
$(G,u)$ is hopfian.
\end{corollary}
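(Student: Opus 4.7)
The plan is to transport the corollary from MV-algebras to unital $\ell$-groups via the categorical equivalence $\Gamma$, using Corollary \ref{corollary:epi} as the workhorse. Set $A=\Gamma(G,u)$ and verify that $A$ inherits the three hypotheses.

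First, finite generation: any generating set $g_{1},\ldots,g_{n}$ of $(G,u)$ can be converted (using the unit $u$ to truncate, and the $\ell$-group identity $g=g^{+}-g^{-}$) into a finite set in $[0,u]$ generating $A$ as an MV-algebra; this is a standard consequence of the equivalence $\Gamma$ stated in Theorem \ref{theorem:gamma}. Second, semisimplicity of $A$ is already recorded in the excerpt as equivalent to semisimplicity of $(G,u)$. Third, the excerpt explicitly gives a homeomorphism $\maxspec(G,u)\cong\maxspec(A)$ carrying $\mathfrak m$ to $\mathfrak m\cap[0,u]$, and states that under this bijection finite rank maximal ideals correspond; hence the density hypothesis transfers from $\maxspec(G,u)$ to $\maxspec(A)$.

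Having verified the hypotheses, Corollary \ref{corollary:epi} yields that $A=\Gamma(G,u)$ is hopfian. To conclude that $(G,u)$ is hopfian, let $\varphi\colon(G,u)\twoheadrightarrow(G,u)$ be a surjective homomorphism of unital $\ell$-groups. The functor $\Gamma$ sends $\varphi$ to $\Gamma(\varphi)\colon A\to A$; since $\Gamma$ restricts to a bijection between $[0,u]\subseteq G$ and $A$ and preserves the operations, $\Gamma(\varphi)$ is surjective. By hopficity of $A$, $\Gamma(\varphi)$ is an automorphism of $A$, and because $\Gamma$ is an equivalence of categories (and in particular fully faithful and conservative), $\varphi$ is itself an automorphism of $(G,u)$.

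The only non-formal point is preservation of finite generation under $\Gamma$; once that is settled, everything else is bookkeeping against the dictionary already compiled in the excerpt, so I do not expect a genuine obstacle beyond unpacking the correspondence. In particular, no new geometric or topological argument is required at this stage, since all the work has been done in Theorem \ref{theorem:residually-finite} and the MV-algebraic Corollary \ref{corollary:epi}.
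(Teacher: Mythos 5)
Your proposal is correct and follows exactly the route the paper intends: the paper offers no separate proof, simply stating that the corollary follows by "combining Theorem \ref{theorem:gamma} with Theorems \ref{theorem:evans}--\ref{theorem:residually-finite}", which is precisely the transfer via $\Gamma$ to Corollary \ref{corollary:epi} that you carry out. Your extra care about preservation of finite generation and about surjectivity of $\Gamma(\varphi)$ (truncating preimages into $[0,u]$) only fills in details the paper leaves implicit.
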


An
MV-algebra $F$ is  said to be 
{\it projective}
if whenever $\psi\colon A\to B$ is
a surjective homomorphism and $\phi\colon F\to B$ is a homomorphism,
there is a homomorphism $\theta\colon F\to A$ such that 
$\phi= \psi \circ \theta$.

\begin{corollary}
\label{corollary:hopfians}
Any
finitely presented, any finitely generated projective,
and in particular any finitely generated free MV-algebra
is hopfian.   
\end{corollary}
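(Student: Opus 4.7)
The plan is to reduce all three cases to Corollary \ref{corollary:epi} by showing that each class of MV-algebras listed consists of finitely generated, semisimple MV-algebras whose finite-rank maximal ideals are dense in the maximal spectral space. Since finitely generated free MV-algebras are a particular case of finitely generated projective ones, and projective MV-algebras are retracts of finitely generated free ones, it is enough to handle finitely presented MV-algebras and finitely generated projective MV-algebras separately (projectives need not be finitely presented in general, though in the finitely generated setting there is strong overlap).

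For the finitely presented case, I would invoke the standard representation: every finitely presented MV-algebra is isomorphic to $\McN(P)$ for some rational polyhedron $P\subseteq [0,1]^n$. Such $\McN(P)$ is finitely generated (by the coordinate projections restricted to $P$) and semisimple (evaluations at points of $P$ separate the elements). By the identification in the proof of Theorem \ref{theorem:residually-finite}, the maximal spectral space of $\McN(P)$ is homeomorphic to $P$, with finite-rank maximal ideals corresponding to rational points of $P$. Since each closed simplex with rational vertices has a dense set of rational points (rational convex combinations of rational vertices are rational and dense in the simplex), the rational points of $P$ are dense in $P$. Corollary \ref{corollary:epi} then gives hopficity.

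For the finitely generated projective case, I would use the fact that a finitely generated projective MV-algebra $F$ is a retract of some finitely generated free MV-algebra $\McN([0,1]^n)$, hence isomorphic to $\McN(X)$ for some closed $X\subseteq [0,1]^n$ that is the image of a $\mathbb Z$-map retraction $r\colon[0,1]^n\to[0,1]^n$. Such an $X$ is automatically a rational polyhedron (being the image of a polyhedral $\mathbb Z$-map), so the same density-of-rational-points argument applies, and $F$ is finitely generated and semisimple. The free case is then immediate by taking $X=[0,1]^n$, where the rational points of the cube are obviously dense.

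The main obstacle is really just bookkeeping: one has to cite (or prove) that finitely presented MV-algebras are exactly the $\McN(P)$ with $P$ a rational polyhedron, and that finitely generated projective MV-algebras have the form $\McN(X)$ with $X$ a rational polyhedron arising as the range of a retraction. Once these representation theorems are in hand, the hopfian conclusion reduces to the elementary fact that rational points are dense in every rational polyhedron, together with Corollary \ref{corollary:epi}.
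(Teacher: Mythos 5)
Your proposal is correct and follows essentially the same route as the paper: identify a finitely presented MV-algebra with $\McN(P)$ for a rational polyhedron $P$, observe that it is finitely generated and semisimple with finite-rank maximal ideals corresponding to the (dense) rational points of $P$, and apply Corollary \ref{corollary:epi}. The only divergence is in the projective case, where the paper simply cites the fact that finitely generated projective MV-algebras are finitely presented, while you sketch the underlying retraction argument (the image of an idempotent $\mathbb Z$-map is a rational polyhedron) --- a sound, self-contained substitute for that citation.
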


\begin{proof} By
\cite[Theorem 6.3]{mun11} we may identify the finitely presented
MV-algebra  $A$ with $\McN(P)$  for some rational
polyhedron $P\subseteq \cube.$  It follows that 
$A$ is semisimple and the rational  points
are dense in $P$.
By Theorem \ref{theorem:semisimple},
the finite rank maximal ideals of $A$ are dense in $\maxspec(A).$
By {Theorem  \ref{theorem:residually-finite}},
 $A$ is residually finite.
Since $A$ is finitely generated, then by 
Corollary  \ref{corollary:epi},\,
$A$ is hopfian. 
By   \cite[Proposition 17.5]{mun11}, finitely generated projective MV-algebras are finitely presented. 
Finitely generated free MV-algebras are particular cases of
finitely  presented MV-algebras.
\end{proof}

A unital $\ell$-group $(G,u)$
 is  said to be 
{\it projective}
if whenever $\psi\colon (K,w) \to (H,v)$ is
a surjective homomorphism and 
$\phi\colon (G,u) \to (H,v)$ is a homomorphism,
there is a homomorphism
$\theta\colon (G,u) \to  (K,w)$ such that 
$\phi= \psi \circ \theta$.

\begin{corollary}
\label{corollary:finpres-group}
Any finitely presented, as well as any finitely generated projective
 unital $\ell$-group $(G,u)$
is hopfian.  
\end{corollary}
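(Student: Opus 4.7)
The plan is to transport Corollary \ref{corollary:hopfians} across the categorical equivalence $\Gamma$ of Theorem \ref{theorem:gamma}. All four properties involved --- being finitely generated, finitely presented, projective, and hopfian --- are purely categorical: hopficity asks that every self-epimorphism be an isomorphism, and an equivalence of categories preserves and reflects epimorphisms and isomorphisms; projectivity is the lifting property against epimorphisms; and finite generation/finite presentation are expressible by coequalizer diagrams of free objects, which are again preserved by any equivalence. Consequently, if $(G,u)$ is finitely presented, then $\Gamma(G,u)$ is a finitely presented MV-algebra, and if $(G,u)$ is finitely generated projective, then $\Gamma(G,u)$ is a finitely generated projective MV-algebra.

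With the transfer in hand, I would invoke Corollary \ref{corollary:hopfians} to conclude that $\Gamma(G,u)$ is a hopfian MV-algebra. Now given any surjective unital $\ell$-homomorphism $h\colon(G,u)\to (G,u)$, its image $\Gamma(h)\colon \Gamma(G,u)\to \Gamma(G,u)$ is a surjective MV-endomorphism; by the hopficity of $\Gamma(G,u)$ it is an isomorphism. Since $\Gamma$ reflects isomorphisms, $h$ itself is an isomorphism, establishing that $(G,u)$ is hopfian.

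The only step requiring care is the verification that $\Gamma$ preserves finite presentation and projectivity, but this amounts to routine categorical bookkeeping once Theorem \ref{theorem:gamma} is available. An alternative, more concrete route would avoid the abstract preservation argument: using the unital $\ell$-group analogue of the geometric characterization of finitely presented objects as piecewise linear homogeneous function algebras on rational polyhedral cones, one shows directly that such a $(G,u)$ is semisimple with finite rank maximal ideals dense in $\maxspec(G,u)$, and then applies Corollary \ref{corollary:epi-bis}. Either route is essentially immediate, and no genuine obstacle is anticipated.
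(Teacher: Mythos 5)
Your proposal is correct and follows essentially the same route as the paper: transfer finite presentation and finitely generated projectivity across the equivalence $\Gamma$ and then invoke Corollary \ref{corollary:hopfians} together with the fact that an equivalence preserves and reflects surjections and isomorphisms. The one caveat is that the paper does not treat the preservation of finite presentation as routine bookkeeping --- since unital $\ell$-groups do not form a variety (the archimedean condition on the unit is not equational), the identification of Gabriel--Ulmer finitely presented unital $\ell$-groups with the $\Gamma$-correspondents of finitely presented MV-algebras is a cited theorem (\cite[Theorem 2.2]{cab-forum}, \cite[Remark 5.10(a)]{carrus}), and your ``coequalizer of free objects'' justification is delicate here because freeness is not a purely categorical notion.
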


\begin{proof}
We first remark that  ``finitely presented
unital $\ell$-groups'' as defined
categorically following   Gabriel--Ulmer  
(\cite[19, \S\S II.2,   IX.1]{maclane})
coincide with 
the $\Gamma$ correspondents
of  finitely presented MV-algebras,
(\cite[Theorem 2.2]{cab-forum}, \cite[Remark 5.10(a)]{carrus}).
%A {\it free} unital $\ell$-group is defined as the
%$\Gamma$ correspondent of a free MV-algebra.
%The freeness properties of the
%$\Gamma$ correspondents of free
%MV-algebras are shown in \cite[Proposition 2]{mun-dcds}.
By Theorem \ref{theorem:gamma},
 finitely generated projective unital $\ell$-groups
correspond via $\Gamma$ to
 finitely generated projective MV-algebras.
 
 The desired result then follows from 
{Corollary \ref{corollary:hopfians}}, in view of the preservation
properties of $\Gamma$,  (Theorem \ref{theorem:gamma}).
\end{proof}

\medskip
\subsubsection*{The free $n$-generator unital $\ell$-groups
 $(M_n,1), \,\,\,n=1,2,\ldots$}
\label{subsection:freeg}
For later use in this paper
we introduce here the correspondents via $\Gamma$ 
of finitely generated free MV-algebras.
To this purpose we denote by $(M_n,1)$
      the unital $\ell$-group of all
piecewise   linear continuous functions
$l \colon [0,1]^{n}\to \mathbb R$
such that each linear piece of $l$ has integer
coefficients: the number of pieces of
$l$ is always finite;  the adjective ``linear'' is 
 understood in the affine sense;
$(M_n,1)$ is 
equipped with the
distinguished unit given by the constant function 1
over $ [0,1]^{n}$. 
As an alternative equivalent definition of
$(M_n,1)$,
\begin{equation}
\label{equation:freeg}
\Gamma(M_n,1)=\McNn.
\end{equation}
%More generally,
%for any nonempty closed subset $X\subseteq [0,1]^{n}$ we let
%$\McN_\mathbb R(X)$ denote
%unital $\ell$-group of restrictions to $X$
%of the functions in $\McN_\mathbb R([0,1]^{n})$,
%with the constant function $1\restrict X$ as
%distinguished unit.
%
%
%
The coordinate functions $\pi_{i}\colon [0,1]^{n} \to
\mathbb R,\,\,\,\,\,(i=1,\ldots,n)$
are said to form a {\it free generating set}
of  $(M_n,1)$ because of the
following result:

\begin{proposition}
         \label{proposition:free}
{\rm \cite[4.16]{mun-jfa}} 
$(M_n,1)$   is
generated by the elements $\pi_{1},\ldots,\pi_{n}$
together with the unit $1$.
For every  unital $\ell$-group $(H,v)$,  $n$-tuple
$(v_{1},\ldots,v_{n})$ of elements in the unit interval $ [0,v]$
of $(H,v)$,
and map \,\,$\eta\colon \pi_{i}
\mapsto v_{i}$,   \,\,$\eta$ can be uniquely extended to a
 homomorphism of
$(M_n,1)$ into $(H,v)$.
\end{proposition}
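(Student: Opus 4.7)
The plan is to deduce this statement from two ingredients already available to us: McNaughton's theorem, which (via (\ref{equation:freeg})) exhibits $\Gamma(M_n,1)=\McNn$ as the free MV-algebra on the $n$ coordinate functions $\pi_1|_{\cube},\ldots,\pi_n|_{\cube}$, and the categorical equivalence $\Gamma$ between unital $\ell$-groups and MV-algebras (Theorem \ref{theorem:gamma}), which converts the desired universal property on one side into the analogous one on the other.

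For the generation claim, I would first invoke McNaughton's theorem \cite[9.1.5]{cigdotmun} to conclude that $\pi_1,\ldots,\pi_n$ generate $\McNn=\Gamma(M_n,1)$ as an MV-algebra. Next, I would use the standard fact (essentially built into the definition of a unital $\ell$-group) that the unit interval $[0,u]$ together with $u$ generates the whole unital $\ell$-group: every $g\in G$ decomposes as $g=g^{+}-g^{-}$, and by archimedeanity of the unit each positive element is dominated by some $Nu$ and can be written as a sum of elements of $[0,u]$ using $g^{+}=\sum_{k=0}^{N-1}\bigl((g^{+}-ku)\wedge u\bigr)^{+}$. Since the MV-operations $\neg,\oplus$ on $[0,u]$ are definable from the $\ell$-group operations and the unit, generation of $\Gamma(M_n,1)$ as an MV-algebra by the $\pi_i$ lifts to generation of $(M_n,1)$ as a unital $\ell$-group by $\pi_1,\ldots,\pi_n,1$.

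For the existence of the extension, given any $n$-tuple $(v_1,\ldots,v_n)$ in $[0,v]=\Gamma(H,v)$, McNaughton's theorem supplies a unique MV-algebra homomorphism $\bar\eta\colon\McNn\to\Gamma(H,v)$ sending $\pi_i\mapsto v_i$. Applying the functor $\Gamma^{-1}$ from Theorem \ref{theorem:gamma} converts $\bar\eta$ into a unital $\ell$-group homomorphism $\eta\colon(M_n,1)\to(H,v)$ whose restriction to the unit interval is $\bar\eta$, so in particular $\eta(\pi_i)=v_i$. Uniqueness is then immediate from the generation statement already proved: any two homomorphisms $(M_n,1)\to(H,v)$ that agree on $\pi_1,\ldots,\pi_n$ and on $1$ must agree on the whole of $(M_n,1)$.

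The only mildly delicate point — really the ``main obstacle'' such as it is — is the transit through $\Gamma$: one must know that an MV-algebra homomorphism $\Gamma(M_n,1)\to\Gamma(H,v)$ uniquely extends to a unital $\ell$-group homomorphism $(M_n,1)\to(H,v)$. This is however exactly the content of the categorical equivalence $\Gamma$ (bijectivity of $\Gamma$ on hom-sets), so no new technical work is required; the entire argument is a bookkeeping exercise translating McNaughton's universal property across the equivalence.
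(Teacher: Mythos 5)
Your proposal is correct. The paper itself gives no proof of Proposition \ref{proposition:free} --- it is simply quoted from \cite[4.16]{mun-jfa} --- and your derivation (McNaughton's theorem for the universal property of $\McNn$, the good-sequence decomposition $g^{+}=\sum_{k=0}^{N-1}\bigl((g^{+}-ku)\wedge u\bigr)^{+}$ to see that the unit interval together with $u$ generates a unital $\ell$-group, and fullness/faithfulness of $\Gamma$ to transport the universal property) is exactly the standard argument one would reconstruct from the ingredients the paper already cites.
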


\begin{corollary}
\label{corollary:free-group}
For each  $n=1,2,\ldots$,
$(M_n,1)$ is hopfian.
\end{corollary}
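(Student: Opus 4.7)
The plan is to derive this as an immediate corollary of Corollary \ref{corollary:finpres-group}. By Proposition \ref{proposition:free}, $(M_n,1)$ is the free unital $\ell$-group on the $n$ generators $\pi_1,\ldots,\pi_n$; equivalently, via (\ref{equation:freeg}) and Theorem \ref{theorem:gamma}, it is the $\Gamma$-correspondent of the free $n$-generator MV-algebra $\McNn$. In particular $(M_n,1)$ is finitely presented, since a free algebra on finitely many generators is presented (in the categorical Gabriel--Ulmer sense used in the proof of Corollary \ref{corollary:finpres-group}) by those generators together with the empty set of relations. Corollary \ref{corollary:finpres-group} then yields that $(M_n,1)$ is hopfian.

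Alternatively, and perhaps more instructively, one can verify the hypotheses of Corollary \ref{corollary:epi-bis} directly. By Proposition \ref{proposition:free} the elements $\pi_1,\ldots,\pi_n$ together with $1$ generate $(M_n,1)$, so it is finitely generated. Each point $x\in [0,1]^n$ gives rise to the maximal ideal $\mathfrak m_x=\{l\in M_n\mid l(x)=0\}$; via Theorem \ref{theorem:semisimple} (applied through $\Gamma$ to $\McNn$) the assignment $x\mapsto \mathfrak m_x$ is a homeomorphism of $[0,1]^n$ onto $\maxspec(M_n,1)$, and $\bigcap_{x\in[0,1]^n}\mathfrak m_x=\{0\}$, so $(M_n,1)$ is semisimple. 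Moreover $\mathfrak m_x$ has finite rank iff $x$ is a rational point of $[0,1]^n$: at rational $x$ the quotient $M_n/\mathfrak m_x$ is a subgroup of $(1/\den(x))\mathbb Z$ with unit $1$, hence of finite rank, while at irrational $x$ the evaluation functionals have dense image in $\mathbb R$. Since rational points are dense in $[0,1]^n$, the finite-rank maximal ideals are dense in $\maxspec(M_n,1)$, and Corollary \ref{corollary:epi-bis} applies.

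There is no real obstacle in this proof, since all the underlying work—Theorem \ref{theorem:residually-finite} and the transfer along $\Gamma$—has already been carried out. The only subtlety worth flagging is the identification of ``free on finitely many generators'' with ``finitely presented'' in the unital $\ell$-group setting, which is exactly the point invoked in the first line of the proof of Corollary \ref{corollary:finpres-group} via \cite[Theorem 2.2]{cab-forum} and \cite[Remark 5.10(a)]{carrus}.
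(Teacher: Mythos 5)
Your proposal is correct and follows essentially the same route as the paper: the paper's proof simply cites \eqref{equation:freeg}, Corollary \ref{corollary:hopfians} and Theorem \ref{theorem:gamma}, which is exactly the content of your first paragraph (Corollary \ref{corollary:finpres-group} being just the $\Gamma$-transported form of Corollary \ref{corollary:hopfians}). Your alternative verification via Corollary \ref{corollary:epi-bis} is also sound, but it merely unwinds the proof of Corollary \ref{corollary:hopfians} rather than giving a genuinely different argument.
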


\begin{proof} This follows from
\eqref{equation:freeg}, by an application
of Corollary
\ref{corollary:hopfians} and 
Theorem \ref{theorem:gamma}.
\end{proof}

%%%%%%%%%%%%%%%%%%%%%%%%%%%%%%%%
 \section{Other classes of hopfian  MV-algebras and
 unital $\ell$-groups}
 \label{section:other}
 %%%%%%%%%%%%%%%%%%%%%%%%%%%%%%%
  \subsubsection*{Germinal ideals and quotients}
  Following \cite[Definition 4.7]{mun11}, 
for any  $n=1,2,\ldots$ and $x\in \cube$
the ideal  $\mathfrak o_x$ is defined by 
 $$ \mathfrak o_x=\{f\in
 \McN(\interval^n)\mid f   \mbox{ identically vanishes over some
 open neighborhood of } x  \}.  $$
$ \mathfrak o_x$ is called
 the {\it germinal ideal
 of $ \McN(\interval^n)$ at $x$}.
 Accordingly, the  quotient  MV-algebra
 $\McN(\interval^n)/\mathfrak o_x$ is called the
{\it germinal MV-algebra at $x$},  and for
  each $f\in \McN(\interval^{n})$, 
 $\,\,f/\mathfrak o_x$ is called the
 {\it germ of $f$ (at $x$),}  and is denoted $\Check f$
 whenever $x$ is clear from the context.
 
 If $x$ happens to lie on the boundary of the $n$-cube, 
the neighborhoods of $x$  are understood relative to the
  restriction topology.

\smallskip

%For each ideal  $\mathfrak j$  of $A$
%(including $A$ itself)  let us write  
% $
% \mathsf{F}_\mathfrak j=\{\mathfrak I \in \spec(A) \mid \mathfrak I
% \supseteq \mathfrak j \}.
% $
% Thus in particular,  $F_A=\emptyset$  and
% $F_{\{0\}}=\spec(A).$   One easily checks that
% $F_\mathfrak i\cup F_\mathfrak j = 
% F_{\mathfrak i \cap \mathfrak j}.$
% For any family $\mathfrak j_i$ of ideals of $A$,
% letting  $\bigvee \mathfrak j_i$ denote the smallest
% ideal of $A$ containing all $\mathfrak j_i,$
% it follows that
% $
% \bigcap  F_{\mathfrak j_i} = F_{\bigvee \mathfrak j_i}.
% $
 %
 %
%%%%%4.14 
% \begin{definition}
%\label{definition:spectral-topology}  

\medskip
In general,  an MV-algebra   in either class (i)-(iii) below
need not fall under the hypotheses of Theorem \ref{theorem:evans}.
Thus the following theorem is not a special case of that
  result.

 \begin{theorem}
 \label{theorem:germ}
 The following  MV-algebras are hopfian:
 \begin{enumerate}
 \item[(i)] Simple MV-algebras. 
 
 \smallskip
\item[(ii)]  Finitely generated  MV-algebras with only 
 finitely prime ideals.
%notably   Chang algebra {\rm C}
%$=\{0,\epsilon, 2\epsilon,\ldots,1-2\epsilon, 1-\epsilon,1\}$.  

 \smallskip 
\item[(iii)] For any  rational  point 
 $w$ lying in the interior of the cube  $\cube$, the germinal
 MV-algebra $A=\McNn/\mathfrak o_w$.
 \end{enumerate}
 \end{theorem}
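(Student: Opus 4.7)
For part (i), a simple MV-algebra has only $\{0\}$ and $A$ itself as ideals, so the kernel of any surjective endomorphism of the nontrivial algebra $A$ must be a proper ideal, hence $\{0\}$; the endomorphism is therefore an automorphism. For part (ii), let $\phi\colon A\twoheadrightarrow A$ be surjective. Since preimages of primes under an MV-homomorphism are prime, the rule $\mathfrak p\mapsto \phi^{-1}(\mathfrak p)$ sends $\spec(A)$ into $\spec(A)$, and it is injective because $\phi$ is surjective (so $\phi(\phi^{-1}(\mathfrak p))=\mathfrak p$). Its image is $\{\mathfrak p\in\spec(A):\mathfrak p\supseteq\ker\phi\}$, and finiteness of $\spec(A)$ forces this image to coincide with all of $\spec(A)$. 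Thus $\ker\phi$ lies in every prime ideal, and since the subdirect representation of any nontrivial MV-algebra gives $\bigcap\spec(A)=\{0\}$, we get $\ker\phi=\{0\}$ and $\phi$ is an automorphism.

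Part (iii) is the geometrically delicate case, and the plan is to parametrize $\mathrm{End}(A)$ by germs at $w$ of integer-coefficient piecewise-linear self-maps of $\cube$ fixing $w$. Since $\McNn$ is free on the coordinate projections $\pi_1,\dots,\pi_n$, an endomorphism $\phi$ of $A=\McNn/\mathfrak o_w$ is determined by the $n$-tuple $\check g_i:=\phi(\check\pi_i)\in A$, acting by $\phi(\check f)=\check{f\circ g}$ with $g=(g_1,\dots,g_n)$. The induced map from $\McNn$ descends to its quotient $A$ exactly when $g(w)=w$: sufficiency comes from continuity of $g$ at $w$, so that a function vanishing in a neighborhood of $w$ pulls back along $g$ to one vanishing in a neighborhood of $w$; necessity follows because for every $p\neq w$ one can exhibit a McNaughton function on $\cube$ vanishing near $w$ but nonzero at $p$.

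Assume now that $\phi$ is surjective. Then each generator $\check\pi_i$ lies in the image of $\phi$, so we obtain $h_1,\dots,h_n\in\McNn$ with $h_i\circ g=\pi_i$ in an open neighborhood of $w$; setting $h=(h_1,\dots,h_n)$ yields $h\circ g=\mathrm{id}$ near $w$, and in particular $g$ is injective there. Since $w$ lies in the interior of $\cube$, a small enough neighborhood of $w$ is an open subset of $\Rn$ and PL invariance of domain applies: $g$ restricts to a PL homeomorphism between two open neighborhoods of $w$, whose inverse must then coincide with the integer-coefficient PL map $h$. The germ of $h$ therefore satisfies $h(w)=w$ and defines a bona fide endomorphism $\psi$ of $A$, and the germ identities $g\circ h=h\circ g=\mathrm{id}$ translate into $\phi\circ\psi=\psi\circ\phi=\mathrm{id}_A$, so $\phi$ is an automorphism. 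The principal obstacle I anticipate is the careful verification of the germ parametrization of $\mathrm{End}(A)$, together with the application of PL invariance of domain in the $\mathbb Z$-coefficient PL category, where one must argue that a one-sided inverse $h$ obtained from surjectivity is automatically a two-sided inverse at the germ level.
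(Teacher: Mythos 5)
Your proof is correct, and for parts (ii) and (iii) it follows a genuinely different route from the paper's. In (i) both arguments are essentially immediate (the paper notes that the only endomorphism of a simple MV-algebra is the identity map; you note that the kernel of a surjection onto a nontrivial algebra is a proper, hence zero, ideal). In (ii) the paper passes through $\Gamma$ to unital $\ell$-groups and invokes the fact that the underlying group is a finite power of $\mathbb Z$, hence hopfian as an abstract group; your argument---an injective self-map of the finite set $\spec(A)$ whose image is the set of primes containing $\ker\phi$ must be onto, so $\ker\phi\subseteq\bigcap\spec(A)=\{0\}$ by the subdirect representation theorem---stays entirely inside MV-algebras, and since it never uses finite generation it actually proves the stronger statement that \emph{every} MV-algebra with finitely many prime ideals is hopfian (just record the routine verification that the image of $\mathfrak p\mapsto\phi^{-1}(\mathfrak p)$ is exactly the set of primes above $\ker\phi$). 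In (iii) the paper identifies the set of primes covered by the unique maximal ideal of $A=\McNn/\mathfrak o_w$ with the sphere $S^{n-1}$ via directional derivatives at $w$, and derives a contradiction from the fact that $S^{n-1}$ is homeomorphic to no proper subset of itself; you instead work directly with a representing self-map $g$ fixing $w$ (the parametrization of $\phi$ as ``germ of $f$ goes to germ of $f\circ g$'' is justified exactly as you say, by freeness of $\McNn$ on the coordinate functions, and equality of germs at $w$ is the same as equality on a neighborhood of $w$), extract from surjectivity on the generators a left inverse $h$ with $h\circ g=\mathrm{id}$ near $w$, and use invariance of domain at the interior point $w$ to conclude that $g$ maps a neighborhood $U$ of $w$ homeomorphically onto the open set $g(U)\ni w$, whence $g\circ h=\mathrm{id}$ on $g(U)$ and the endomorphism induced by $h$ is a two-sided inverse of $\phi$. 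Both proofs ultimately rest on Brouwer-type topology, but yours is more local and elementary, bypasses the spectral analysis entirely, and---since it nowhere uses the rationality of $w$---yields hopficity of the germinal algebra at every interior point; what the paper's route buys in exchange is the structural homeomorphism between the next-to-maximal part of the prime spectrum and $S^{n-1}$, which is of independent interest.
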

 
 \begin{proof}
(i) By Theorem \ref{theorem:semisimple}(i)-(ii),
  the only endomorphism
of a simple MV-algebra is identity.
 
 \smallskip
(ii)  Let  $\sigma$ be a surjective homomorphism
 of $A$ onto itself, with the intent to prove that $\sigma$
 is injective.
 Let the unital $\ell$-group $(G,u)$
 be defined by $\Gamma(G,u)=A.$
 Then for a unique surjective  homomorphism
 $\tau\colon (G,u)\to (G,u)$ we have  $\sigma=\Gamma(\tau)$.
  Let  $G_{\rm group}$ denote the group reduct of  $(G,u)$.
 The elementary theory of $\ell$-groups
 and their lexicographic products 
\cite[Theorem 3.2]{andfei}  shows that 
$G_{\rm group}$  is a finite product of
 cyclic groups $\mathbb Z.$
 It follows that $G_{\rm group}$ 
 is  hopfian (\cite[\S 3.3 20(d) p. 152]{magkarsol}).
Since, in particular,   $\tau$ is a  homomorphism
of $G_{\rm group}$  onto itself,  $\tau$ is injective.
Going back to $(G,u)$ and $A$,
 also the homomorphism  $\sigma=\tau\restrict[0,1]$
is injective, and $A$ is hopfian.

\medskip
(iii) Let $S^{n-1}$ be the set of unit vectors in $\mathbb R^n$.
 Let $f,g\in \McNn$. By 
  \cite[Proposition 4.8]{mun11},    
\begin{equation}
\label{equation:condition-for-germinal}
f/\mathfrak o_w
=g/\mathfrak o_w  \mbox{  iff  } 
f(w)=g(w)   \mbox{  and  } 
\partial f(w)/\partial e = \partial g(w)/\partial e
\mbox{ for  all   $e\in S^{n-1}$.} 
\end{equation}
Thus we may unambiguously speak of {\it the
directional derivative of a germ}  $\Check f\in A$  and write
$$
\partial \Check f(w)/\partial d = \partial g(w)/\partial d,
\,\,\,\mbox{  $d \in \Rn$,}
$$
independently of the actual representative $g\in \McNn$
of   $\Check f.$
Similarly, we may unambiguously define  $\Check f(w)=g(w)$
and say that {\it  the germ $\Check f$ has value}  $g(w).$
 
 The germinal MV-algebra $A$ has exactly one maximal ideal, namely
 the set $\mathfrak g_w$ given by 
$$
\mathfrak g_w=\{\Check g\in A\mid \mbox{$g\in \McNn$ 
vanishes only at $w$ 
in an open neighborhood of $w$}\}.
$$
 For every prime ideal $\mathfrak p$ of $A$
 the only maximal ideal of $A$ containing $\mathfrak p$  is
 $\mathfrak g_w$.   The   ideals of $A$ containing
 $\mathfrak p$ are all prime, and  form a finite chain (under inclusion),
 whose greatest element is    $\mathfrak g_w$. 
We say that  a prime ideal $\mathfrak q$
of $A$  belongs to
$sub\maxspec(A)$ if  ($\mathfrak q\not= \mathfrak g_w$ and) 
the only ideal properly containing
$\mathfrak q$ is $\mathfrak g_w$.

%  the only maximal ideal $\mathfrak m_\mathfrak p$ of $A$ 
% above $\mathfrak p.$  Since  $A$ has exactly one maximal ideal, for 
%  all $\mathfrak p, \mathfrak q\in
% sub\maxspec(A)$ we have
% $$ \mathfrak m_\mathfrak p =
% \mathfrak m_\mathfrak q =
%  \mbox{the set of germs in $A/\mathfrak o_w$   vanishing at $w$}.
% $$

 \medskip
 \noindent
 {\it Claim.}   Equipped with the  topology
 inherited from  $\spec(A)$ by restriction,
 $sub\maxspec(A)$  is homeomorphic
 to the $(n-1)$-sphere,    
  \begin{equation}
\label{equation:sphere}
 sub\maxspec(A) \cong S^{n-1}.
 \end{equation}

%%%%%%%true true true by rationality of w
 As a matter of fact,  let  $\mathfrak p\in sub\maxspec(A)$
 and  $\Check f\in \mathfrak p$.  
%By  \eqref{equation:condition-for-germinal} we may assume that
%the zeroset of $f$ is the singleton point $\{w\}$. 
It is not hard to see that 
the directional derivative  
$\partial \Check f(w)/\partial d $
of $\Check f$ at $w$ must vanish along some
direction $d\in S^{n-1}$. 
For otherwise,  since the map 
$d\in S^{n-1}\mapsto \partial f(w)/\partial d\in \mathbb R$ 
is continuous and $S^{n-1}$ is compact, there is
$\rho>0$ such that $\partial f(w)/\partial d\geq \rho$ 
for all $d\in S^{n-1}.$
Thus every function  $g\in \McNn$ 
 vanishing
at $w$ is dominated by a suitable integer multiple of $f$.
It follows that  $\mathfrak p$
coincides with the maximal ideal $\mathfrak g_w$ of $A$,
 against our hypotheses
that  $\mathfrak p\subsetneqq \mathfrak g_w$. 
Another application of the
continuity of the map  $d \mapsto \partial \Check f(w)/\partial d$
 shows that
the set  $S_{\Check f}$ of unit vectors  $v$
such that $\partial \Check f(w)/\partial v=0$ is a {\it closed}
nonempty subset 
of $S^{n-1}$.
A compactness argument now shows that
the intersection of the sets  $S_{\Check g}$, where
$\Check g$ ranges over all elements of $\mathfrak p$,
is a closed nonempty subset $D_\mathfrak p$ of  $S^{n-1}$.
Since the point $w$ is rational and
  $\mathfrak p$ is prime, a routine  separation
  argument  shows that
   this set must
be a singleton, $D_\mathfrak p=\{d_\mathfrak p\}$ with
$d_\mathfrak p\in  S^{n-1}$.

 Conversely, 
  every    unit vector $d\in S^{n-1}$ determines
 the prime ideal   $\mathfrak p_d$  of $A$ 
 consisting of the germs of $A$ vanishing at $w$ and whose
 directional derivative at $w$ along $d$ vanishes.
 Since $w$ is a rational point, $\mathfrak p_d$ does not
 coincide with $\mathfrak g_w$. 
 The first part of the proof of the claim can be used to show that
 there is no prime ideal $\mathfrak p$ between $\mathfrak g_w$ and
  $\mathfrak p_d$. So  $\mathfrak p_d$ belongs to 
 $sub\maxspec(A)$.

In conclusion, the  maps 
\begin{equation}
\label{equation:primes-directions}
 \mathfrak p\in sub\maxspec(A)\mapsto
 d_\mathfrak p\in S^{n-1} \mbox{ and } \,\, d \in S^{n-1}\mapsto
  \mathfrak p_d\in sub\maxspec(A)
\end{equation}
are inverses of each other.  By  definition of the
prime spectral topology, these maps are homeomorphisms.
Our claim is settled.

\smallskip
For  every $ g \in \McNn$
we can now expand  \eqref{equation:condition-for-germinal}
as follows:
\begin{equation}
\label{equation:stronger}
g\in \mathfrak o_w
 \mbox{  iff  } 
\Check g(w) =0 
\mbox{ and }\partial\Check g(w)/\partial d
\mbox{ for all  $d\in \Rn$} 
 \mbox{  iff  } 
\Check g \in \mathfrak p
 \mbox{ for all $\mathfrak p\in sub\maxspec(A)$}. 
\end{equation}
%Incidentally, we have obtained the well known identity 
% $\mathfrak o_w=\bigcap \{\mathfrak p\in sub\maxspec(A)\}. $
%%% NEEDS TO PASS TO GERMS

\medskip
To conclude the proof, by
 way of contradiction  let  
$\sigma\colon A\to A$ be a surjective endomorphism
which is not  injective, i.e., there exists  
$h \in\McNn$  satisfying
$ 
0\not= \Check h   \in \ker(\sigma).
$
Since, trivially,  
$\Check h(w)=0$, then by \eqref{equation:condition-for-germinal}  we must have
$
\partial \check h(w)/\partial r >0 \mbox{ for some } r\in S^{n-1}.
$
Thus $\check h$ does not belong to the prime ideal 
$\mathfrak p_r$
of \eqref{equation:primes-directions}.
It follows from \eqref{equation:stronger} that 
\begin{equation}
\label{equation:crucial}
\ker(\sigma)\nsubseteq \mathfrak   p_r \in 
sub\maxspec(A). 
\end{equation}

%%%FACOLTATIVO
%Since for fixed $a\in \cube$ and $f\in \McNn$, the
% directional derivative  
% $\partial f(a)/\partial d$   varies
%continuously with the direction $d\in \Rn$,   there is
%a nonempty relatively open set 
%$O_h\subseteq S^{n-1}$ of
%unit vectors in the unit sphere $S^{n-1}\subseteq \Rn$ centered at 0
%such that 
%$
%\partial \Check h(w)/\partial s >0 \mbox{ for all  } s\in O_h.
%$
%Moreover, $O_h\not=S^{n-1}$, 
%because $\Check h$ is not the zero germ.

We now inspect the prime spectrum of the quotient MV-algebra
$A/\ker(\sigma)$. 
The elementary ideal theory of MV-algebras,
 \cite[pp. 16-18]{cigdotmun},\,\,\, yields  a
 one-one  inclusion preserving correspondence
 between $\spec(A/\ker(\sigma))$
and  the set   of prime ideals of  $A$ containing  
$\ker(\sigma).$
Since this correspondence is 
 also a homeomorphism, 
the set  $sub\maxspec(A/\ker(\sigma))$
equipped with the restriction topology is homeomorphic
to  the set $\mathfrak P$ of all 
$\mathfrak p\in sub\maxspec(A)$ containing  $ \ker(\sigma)$.
Now \eqref{equation:sphere} and \eqref{equation:crucial} show that
$\mathfrak P$  is homeomorphic to a {\it proper\/} subset of
$ S^{n-1}$. 
As is well known, 
$ S^{n-1}$ is not homeomorphic to any of its proper subsets
 (see, e.g., \cite[7.2(6), p. 180]{ams} 
 for a proof).  As a consequence, 
 $$sub\maxspec(A/\ker(\sigma))\not\cong sub\maxspec(A).$$
By contrast, from the isomorphism
$
A=\sigma(A) \cong A/\ker(\sigma)
$
we obtain  
$
\spec(A)\cong \spec(A/\ker(\sigma)),
\,\,\ \maxspec(A)\cong\maxspec(A/\ker(\sigma)).
$
Since the homeomorphism of
$\maxspec(A)$  onto $\maxspec(A/\ker(\sigma))$ is order preserving
(and so is its inverse) we finally obtain 
$$ 
sub\maxspec(A)\cong sub\maxspec(A/\ker(\sigma)),
$$
a contradiction showing that $A$ is hopfian.
\end{proof}

 \begin{corollary}
 	\label{corollary:germ-group}
 The following classes of unital $\ell$-groups are hopfian:
 \begin{enumerate}
 \item[(i)] Simple unital $\ell$-groups. 
 
 \smallskip
 \item[(ii)]  Finitely generated  unital $\ell$-groups with
 only finitely many prime ideals.  
 
  \smallskip
 \item[(iii)] For $w$ a rational point
  lying in the interior of the cube  $\cube$, the 
  {\em germinal
unital $\ell$-group $(G,u)$   at $w$},  defined by
$\Gamma(G,u)=\McNn/\mathfrak o_w$.
(See \cite[\S 10.5]{bkw} for an equivalent definition independent
of MV-algebras). 
 \end{enumerate}
 \end{corollary}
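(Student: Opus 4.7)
The plan is to transfer each of the three cases to its MV-algebraic counterpart in Theorem \ref{theorem:germ} via the categorical equivalence $\Gamma$ of Theorem \ref{theorem:gamma}. Since $\Gamma$ is an equivalence of categories, it preserves and reflects isomorphisms, surjective homomorphisms, and injective homomorphisms; in particular, $(G,u)$ is hopfian if and only if $\Gamma(G,u)$ is hopfian. Moreover, $\Gamma$ induces an order-preserving bijection between ideals of $(G,u)$ and ideals of $\Gamma(G,u)$, which sends primes to primes, maximal ideals to maximal ideals, preserves finite rank, and restricts to a homeomorphism of the associated (maximal) spectral spaces. With these preservation properties in hand, each of (i)--(iii) becomes a direct appeal to the corresponding clause of Theorem \ref{theorem:germ}.

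For (i), the equivalence $\Gamma$ makes simplicity of $(G,u)$ the same as simplicity of $\Gamma(G,u)$, so Theorem \ref{theorem:germ}(i) applies at once. For (iii), the definition of the germinal unital $\ell$-group $(G,u)$ at $w$ is precisely $\Gamma(G,u) = \McNn/\mathfrak{o}_w$, which is the germinal MV-algebra of Theorem \ref{theorem:germ}(iii); the conclusion is immediate.

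Case (ii) is the only one requiring a separate remark, namely that finite generation is preserved by $\Gamma$. This follows from the fact, recorded in the subsection on $(M_n,1)$, that $(M_n,1)$ is the free unital $\ell$-group on $n$ generators and, by \eqref{equation:freeg} and Proposition \ref{proposition:free}, $\Gamma(M_n,1) = \McN([0,1]^n)$ is the free MV-algebra on $n$ generators; finite generation being a categorical property (existence of a surjection from the free object on finitely many generators), $\Gamma$ preserves it in both directions. Combined with the bijection between prime ideals, this shows $\Gamma(G,u)$ is a finitely generated MV-algebra with only finitely many prime ideals, so Theorem \ref{theorem:germ}(ii) applies.

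The proof is essentially bookkeeping under $\Gamma$, and no substantive obstacle is expected: the geometric and spectral content was already isolated in the proof of Theorem \ref{theorem:germ}, and the only point requiring explicit mention is the compatibility of the notion ``finitely generated'' across the two categories, which is handled by the freeness of $(M_n,1)$ together with the equivalence $\Gamma$.
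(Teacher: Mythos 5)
Your proposal is correct and follows exactly the paper's route: the paper's entire proof is the one-line citation ``By Theorems \ref{theorem:germ} and \ref{theorem:gamma},'' and your argument is simply a careful unpacking of that citation, transferring each clause through $\Gamma$ and checking that simplicity, finite generation, and the prime-ideal structure are preserved. The extra detail you supply (in particular the remark on why finite generation transfers) is accurate and consistent with the preservation properties recorded in Theorem \ref{theorem:gamma} and Proposition \ref{proposition:free}.
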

 \begin{proof} By   Theorems
\ref{theorem:germ} and \ref{theorem:gamma}.
 \end{proof}

  \begin{theorem}
 \label{theorem:manifold}
  Let $A$ be a semisimple
 MV-algebra.

\smallskip 
 (i) If $\maxspec(A)$ is an
 $n$-dimensional manifold without boundary then  $A$ is hopfian.

  \smallskip
 (ii)  If
 $\maxspec(A)$ is 
 an $n$-manifold 
 with boundary, and
$A$ has a generating set with $n$
 elements, then  $A$ is hopfian.

  \smallskip
 
 (iii) More generally,  $A$ is hopfian if
  $A$ has a generating set of $n$ elements
  and   $\maxspec(A)$ is homeomorphically
  embeddable onto a subset $X$ of $\cube$
  coinciding with the closure of its interior.
 \end{theorem}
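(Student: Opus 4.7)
The plan is to dispose of the three parts in reverse order: (iii) is the strongest assertion, carrying $n$ generators together with a purely topological hypothesis on $X$; (ii) will fall out as a special case once one verifies that an $n$-manifold with boundary sitting in $\cube$ automatically satisfies $X=\cl(\interior X)$; and (i) must be handled separately, because no finite generation is assumed.

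For (iii), I would apply Theorem \ref{theorem:representation}(iii) to the finitely generated semisimple $A$ to identify it with $\McN(X)$, with the homeomorphism $\maxspec(A)\cong X\subseteq\cube$ afforded by the $n$ generators. The hypothesis $X=\cl(\interior X)$ makes $\interior X$ a nonempty open subset of $\mathbb R^n$ dense in $X$, so the rational points of $\mathbb R^n$ lie densely in $\interior X$ and, by closure, in $X$. Via the identification of rational points with finite-rank maximal ideals from Theorem \ref{theorem:semisimple}, the finite-rank maximal ideals of $A$ are dense in $\maxspec(A)$, and Corollary \ref{corollary:epi} delivers hopficity directly.

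For (ii), fix the embedding $\maxspec(A)\cong X\subseteq\cube$ coming from the $n$ generators. It suffices to verify that $X=\cl(\interior X)$ in $\mathbb R^n$, reducing matters to (iii). Any manifold-interior point $p\in X$ has a chart $\phi\colon V\to X\cap U$ with $V$ open in $\mathbb R^n$; composing with $X\hookrightarrow\mathbb R^n$ gives a continuous injection $V\to\mathbb R^n$ whose image is open by invariance of domain, placing $p$ in the Euclidean interior of $X$. The manifold-boundary of $X$ is an $(n-1)$-manifold, hence nowhere dense, so $X$ is the closure of its manifold interior, which in turn lies in $\interior X$.

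For (i), since $A$ is not assumed to be finitely generated, Corollary \ref{corollary:epi} is unavailable, and I would argue dually on $\maxspec(A)$. A surjective endomorphism $\sigma\colon A\to A$ induces the continuous self-map $\sigma^*(\mathfrak m)=\sigma^{-1}(\mathfrak m)$: maximality follows because $A/\sigma^{-1}(\mathfrak m)\cong A/\mathfrak m$ is simple; continuity because $(\sigma^*)^{-1}(\mathsf B_a)=\mathsf B_{\sigma(a)}$; injectivity because $\sigma$ is surjective. Invariance of domain applied through manifold charts makes $\sigma^*$ an open map, so $\sigma^*(\maxspec(A))$ is simultaneously open and compact, hence clopen, hence a union of connected components. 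Compactness and local connectedness force finitely many components; an injection on a finite set is bijective, and since each component is connected while its image is a nonempty clopen subset of its target component, $\sigma^*$ is surjective on each component and therefore on all of $\maxspec(A)$. Semisimplicity then forces $\ker\sigma=\{0\}$: for $\sigma(a)=0$, every $\mathfrak m=\sigma^*(\mathfrak n)$ contains $a$, so $a$ lies in every maximal ideal and must vanish. The main obstacle is precisely this component-by-component bookkeeping in (i): the manifold-without-boundary hypothesis has to be used uniformly so that invariance of domain applies at every point with no defect at a possible boundary, and the a priori global injection $\sigma^*$ must be upgraded to a surjection on each target component before one may pass to global surjectivity.
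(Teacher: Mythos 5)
Your proof is correct. For parts (ii) and (iii) it coincides with the paper's argument (identify $A$ with $\McN(X)$, deduce density of rational points from $X=\cl(\interior X)$, hence density of finite rank maximal ideals, then invoke Corollary \ref{corollary:epi}); your invariance-of-domain justification that a manifold-interior point of $X$ is a Euclidean-interior point, so that (ii) really does reduce to (iii), is a step the paper asserts without proof, and it is worth having. For part (i) you take a genuinely different route. The paper supposes $\sigma$ surjective and non-injective, passes to the zero-set $N'=\bigcap\{f^{-1}(0)\mid f\in\ker\sigma\}$, uses semisimplicity of $\sigma(\McN(N))$ and Proposition \ref{proposition:3.4.5} to get $\McN(N)/\ker\sigma\cong\McN(N')$, hence a homeomorphism of the compact boundaryless manifold $N$ onto its proper closed subset $N'$, and quotes Varadarajan's result that this is impossible. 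You instead dualize first: the map $\sigma^*(\mathfrak m)=\sigma^{-1}(\mathfrak m)$ is a continuous injection of $\maxspec(A)$ into itself, and you re-prove the relevant topological fact from scratch (invariance of domain makes $\sigma^*$ open, compactness and local connectedness give finitely many components, and the component bookkeeping forces surjectivity), after which semisimplicity kills $\ker\sigma$ since every maximal ideal is of the form $\sigma^{-1}(\mathfrak n)$. Both arguments rest on the same topological input --- a compact manifold without boundary admits no non-surjective continuous self-injection --- but yours is self-contained (no citation needed, no representation of $A$ as $\McN(N)$, no appeal to Yosida's Proposition \ref{proposition:3.4.5}), at the cost of redoing the point-set topology that the paper outsources. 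One small point to make explicit: $\sigma^{-1}(\mathfrak m)$ is proper because $\sigma$ is unital, and injectivity of $\sigma^*$ uses $\sigma(\sigma^{-1}(\mathfrak m))=\mathfrak m$, which again requires surjectivity of $\sigma$; you use both facts correctly but tacitly.
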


 \begin{proof}
 (i)
  Let $\kappa$ be the cardinality of a generating set of 
 $A$.  By Theorems
 \ref{theorem:representation}(iii)
 and \ref{theorem:semisimple},
 we may identify the semisimple MV-algebra
$A$ with
the MV-algebra  $\McN(N)$ for some
homeomorphic copy  $N\subseteq \interval^\kappa$
of the maximal spectral space  $\maxspec(A).$
$N$ is an $n$-manifold with boundary embedded into
$\interval^\kappa.$
 
 By way of contradiction,  suppose  $\sigma$
is  a  homomorphism  of $\McN(N)$
 onto $\McN(N)$  and $\sigma$ is not one-one.
 By Lemma \ref{1.2.8} 
we have an   isomorphism
\begin{equation}
\label{equation:primoiso}
  \McN(N)/\ker{\sigma}\cong
\McN(N). 
%= \sigma(\McN(N))
%\mbox{ with } \tau(x/\ker(\sigma))=\sigma(x)
% \mbox{ for all } x\in \McN(N).  
\end{equation}
Let  $N' =\bigcap \{f^{-1}(0)
 \mid f\in \ker(\sigma)\}$.
 By Theorem \ref{theorem:semisimple},
 $N'\cong \maxspec(\McN(N'))$.  
Since  $\ker(\sigma)
\not=\{0\}$, 
  $N'$ is a  proper  nonempty closed subset of $N$,
(\cite[Proposition 3.4.2]{cigdotmun} and definition of
maximal spectral topology).
Since   $\sigma(\McN(N))=\McN(N)$
 is semisimple,
  $\ker(\sigma)$
 is an intersection of maximal ideals.
By Proposition \ref{proposition:3.4.5}, 
 $ \McN(N)/\ker{\sigma}\cong \McN(N').$
By \eqref{equation:primoiso},  
$\McN(N)$ and $\McN(N')$ are isomorphic,
whence their respective maximal spectral spaces are
homeomorphic, and so are  
$N'$ and $N$.

  However, since $N$
is a manifold without boundary, every one-one continuous map
of  $N$ into    $N$  is surjective. (See
  \cite[Corollary 6.3]{var} for a proof.) 
    We have thus reached a contradiction,
  showing that $A$ is hopfian.

 \smallskip  
  (ii)  For  some integer $n>0$, 
Theorems 
 \ref{theorem:representation}(iii) and \ref{theorem:semisimple}, 
 enable us to 
identify $\maxspec(A)$ with a closed subset $X$ of $\interval^n$,
and write $A=\McN(X).$   
Our hypothesis about $\maxspec(A)$ entails that
 $X$  is equal to the closure of its interior
 in $\cube$.
Thus the rational points are dense in $X$, whence
(by Theorem \ref{theorem:semisimple}), 
  the finite rank maximal ideals of $A$ are dense in 
  $\maxspec(A)$. By  Theorem
  \ref{theorem:residually-finite},  $A$ is residually finite.
  By hypothesis,  $A$ is finitely generated. By 
  Corollary  \ref{corollary:epi},
  $A$ is hopfian.
 
  \smallskip 
  (iii) Same proof as for (ii).
\end{proof}

\begin{corollary} Let $(G,u)$ be a semisimple
 unital $\ell$-group.
 Suppose the maximal spectral  space of
  $(G,u)$     is a
 manifold without boundary.  Then $(G,u)$ is hopfian.
 \end{corollary}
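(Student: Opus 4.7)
The plan is to reduce the statement to its MV-algebraic counterpart, Theorem \ref{theorem:manifold}(i), via the categorical equivalence $\Gamma$.

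First I would set $A=\Gamma(G,u)$. Since $(G,u)$ is semisimple, the remarks in Section \ref{section:examples} preceding Corollary \ref{corollary:epi-bis} tell us that $A$ is a semisimple MV-algebra. Moreover, Theorem \ref{theorem:gamma} provides a canonical homeomorphism $\maxspec(G,u)\cong\maxspec(A)$, so the hypothesis that $\maxspec(G,u)$ is an $n$-dimensional manifold without boundary transfers verbatim to $\maxspec(A)$.

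Next I would invoke Theorem \ref{theorem:manifold}(i) to conclude that $A$ is hopfian. Note that part (i) of that theorem, unlike parts (ii)--(iii), imposes no finite generation requirement, which matters here because the hypotheses of the corollary do not mention finite generation of $(G,u)$.

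Finally I would transfer hopficity back along $\Gamma$. Suppose $\tau\colon(G,u)\to(G,u)$ is a surjective unital $\ell$-homomorphism. Because $\Gamma$ is a functor that preserves surjections (Theorem \ref{theorem:gamma}), the MV-algebra map $\Gamma(\tau)\colon A\to A$ is a surjective endomorphism, hence an automorphism by hopficity of $A$. Since $\Gamma$ is a categorical equivalence, it reflects isomorphisms, so $\tau$ itself is an automorphism. Therefore $(G,u)$ is hopfian.

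There is no real obstacle here; the argument is a routine application of the equivalence $\Gamma$, with all the substantive content (the manifold-invariance argument using Proposition \ref{proposition:3.4.5} and the fact that a manifold without boundary admits no injective continuous self-map onto a proper closed subset) already carried out in the MV-algebraic setting of Theorem \ref{theorem:manifold}(i).
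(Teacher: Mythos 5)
Your proposal is correct and is exactly the argument the paper intends: the corollary is stated immediately after Theorem \ref{theorem:manifold} with no written proof, following the same pattern as Corollary \ref{corollary:germ-group}, namely transfer along $\Gamma$ using the homeomorphism $\maxspec(G,u)\cong\maxspec(\Gamma(G,u))$ and the preservation of semisimplicity noted before Corollary \ref{corollary:epi-bis}. Your observation that part (i) of Theorem \ref{theorem:manifold} requires no finite generation hypothesis (unlike parts (ii)--(iii)) is the right point to check, and it holds since that proof works for a generating set of arbitrary cardinality $\kappa$.
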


%%%%%%%%%%%%%%%%%%%%%
\section{Hopfian vs. non-hopfian}
 %%%%%%%%%%%%%%%%%%%%%

In general, the finite rank maximal ideals
of an $n$-generator  semisimple  MV-algebra $A$ are not
dense in $\maxspec(A),$  even when $n=1.$ 
So  the following  
result is not derivable from Theorem \ref{theorem:evans}:

 \begin{proposition}
 \label{proposition:onegen}
Every one-generator semisimple  MV-algebra 
$A$  is hopfian.
 \end{proposition}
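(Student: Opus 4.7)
The plan is to realize $A$ concretely via the standard representation and reduce hopficity to a rigidity claim about $\mathbb Z$-self-maps of a closed subset of $[0,1]$. By Theorems \ref{theorem:representation}(iii) and \ref{theorem:semisimple} applied with $n=1$ (using that $A$ is one-generator and semisimple), I identify $A$ with $\McN(X)$ for some nonempty closed $X\subseteq [0,1]$, with distinguished generator $\pi\colon x\mapsto x$. Any homomorphism $\sigma\colon A\to A$ is determined by $f:=\sigma(\pi)\in A$, and for $\sigma$ to be well defined on the quotient $\McN([0,1])/I_X$ one must have $f(X)\subseteq X$. A short check shows that $\sigma$ is surjective iff $f\colon X\to f(X)$ is a $\mathbb Z$-homeomorphism (equivalently, there is a $\mathbb Z$-map $g$ on $f(X)$ with $g\circ f=\pi$), and that $\sigma$ is injective iff $f(X)=X$. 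The problem thus reduces to showing that any $\mathbb Z$-homeomorphism $f\colon X\to f(X)\subseteq X$ satisfies $f(X)=X$.

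If $X$ is finite the conclusion is immediate by cardinality. For $X$ infinite I extend $f$ and $g=f^{-1}|_{f(X)}$ to McNaughton functions $\tilde f,\tilde g\colon [0,1]\to[0,1]$. The identity $\tilde g\circ\tilde f=\mathrm{id}$ on $X$, together with the chain rule applied at the cofinite set of $x\in X$ that are non-kink points of $\tilde f$ and whose image is a non-kink point of $\tilde g$, yields $\tilde f'(x)\cdot\tilde g'(\tilde f(x))=1$; integrality of the slopes then forces $\tilde f'(x)=\pm 1$. Consequently, on every linear piece $P$ of $\tilde f$ whose interior meets $X$ in infinitely many points, $\tilde f|_P$ has slope $\pm 1$, and integrality of the constant coefficient combined with $\tilde f(P)\subseteq[0,1]$ and $P\subseteq[0,1]$ forces $\tilde f|_P$ to be either the identity $x\mapsto x$ or the reflection $x\mapsto 1-x$. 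I then argue piece by piece that $\tilde f$ acts as a setwise bijection on a cofinite subset of $X$ containing every limit point (reflection pieces being admissible only where $X$ is locally symmetric about $1/2$); the finitely many remaining isolated points of $X$ are then permuted among themselves by the injectivity of $f|_X$, which gives $f(X)=X$.

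The main obstacle is this final gluing step: pieces of $\tilde f$ that meet $X$ in only finitely many points may carry arbitrary integer slopes, so one must combine the continuity of $\tilde f$ (identity and reflection agree only at $x=1/2$, restricting how pieces can glue together), the injectivity of $f|_X$ (ruling out collisions between the identity/reflection bulk action and the images of stray isolated points), and the boundedness $X\subseteq[0,1]$ (which forbids $x\mapsto x+c$ with $c\neq 0$ on non-degenerate pieces of $[0,1]$) to verify that no isolated point of $X$ can lie outside $f(X)$.
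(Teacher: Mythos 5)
Your proposal is correct, but it takes a genuinely different route from the paper's. The paper performs the same initial reduction (identify $A$ with $\McN(X)$, pass to the zero-set $X'$ of $\ker(\sigma)$, which is your $f(X)$), but then argues softly: the isomorphism $\McN(X)\cong\McN(X')$ induces a homeomorphism $\theta\colon X'\to X$ preserving the arithmetic invariant $G_x=\mathbb Z x+\mathbb Z$ of the quotient $\McN(X)/\mathfrak m_x$, and a point $c\in X\setminus X'$ is then excluded by a pigeonhole count of the finitely many points of a fixed denominator when $c$ is rational, and by the rigidity $G_a=G_b\Leftrightarrow a=b$ or $a=1-b$ when $c$ is irrational. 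You instead dissect the piecewise-linear structure of $f$ itself: the chain-rule-plus-integrality step forcing slope $\pm1$ on every piece meeting $X$ infinitely often, and the boundedness step forcing such pieces to be $x$ or $1-x$, are both sound, provided you take one-sided difference quotients along sequences in $X$ (since $g\circ f=\mathrm{id}$ holds only on $X$) and first invoke injectivity of $f|_{X}$ to rule out slope $0$. What your approach buys is an explicit structure theorem---a surjective endomorphism acts as $x$ or $1-x$ on all but finitely many points of $X$---which the paper's invariant-based argument does not reveal; the price is the final gluing, the one place your sketch is thin. Since the isolated points of $X$ may be infinite in number, you genuinely need a cofinite $f$-invariant set $Y$ containing the derived set, not merely invariance of the derived set itself; the cleanest route is to note that if $f(x)=1-x$ on a reflection piece then $1-x\in X$ cannot lie in an identity piece (otherwise $f(x)=f(1-x)$ contradicts injectivity), so apart from the finitely many $x$ whose mirror $1-x$ falls in a piece meeting $X$ only finitely often, the reflection points pair off into two-cycles $\{x,1-x\}$; taking $Y$ to be the identity points together with these paired reflection points gives $f(Y)=Y$, after which injectivity permutes the finite remainder exactly as you say. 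Note also that the continuity constraint you emphasize (identity and reflection pieces meeting only at $1/2$) is not actually needed for this gluing---injectivity alone suffices.
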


\begin{proof}
$A$ is a quotient of the free one-generator
MV-algebra $\McN(\interval)$.
By Theorems
 \ref{theorem:representation}(iii)
 and \ref{theorem:semisimple},
  we may identify the semisimple MV-algebra
$A$ with
the MV-algebra  $\McN(X)$ for some
homeomorphic copy  $X\subseteq \interval$
of the maximal spectral space  $\maxspec(A).$

 By way of contradiction,  suppose
 there is a  homomorphism $\sigma$ of $\McN(X)$
 onto $\McN(X)$  which is not one-one.
 By Lemma \ref{1.2.8} 
we have an   isomorphism
\begin{equation}
\label{equation:primoiso-bis}
\tau\colon \McN(X)/\ker{\sigma}\cong
\McN(X) 
%= \sigma(\McN(X))
\mbox{ with } \tau(x/\ker(\sigma))=\sigma(x)
 \mbox{ for all } x\in \McN(X).  
\end{equation}

%\commento{Proposition 3.4.2
% is the inclusion reversing
%map from the ideals of $A$ 
%into the closed subsets of its maximal space
%}

Let  $X' \subseteq X$ be the intersection of the zerosets
$f^{-1}(0)$  of the functions  $f\in \ker(\sigma)$.
  By Theorem \ref{theorem:semisimple},
 $X'$ is homeomorphic  to $\maxspec(\McN(X'))$. 
Since  $\sigma$ is not one-one, $\ker(\sigma)
\not=\{0\}$.
By definition of maximal spectral
topology,
  $X'$ is a 
{\it proper} nonempty
 closed subset of $X$,
 (see \cite[Proposition 3.4.2]{cigdotmun} for details).

Note that  $\ker(\sigma)$
 coincides with the  intersection of all maximal ideals
 containing it
(because  $\sigma(\McN(X))=\McN(X)=A$,  which is
 assumed to be  semisimple).
By   \cite[Proposition 3.4.5]{cigdotmun}, 
 $$
  \McN(X)/\ker{\sigma}\cong \McN(X').  
 $$
Then by  \eqref{equation:primoiso-bis}
  we have an isomorphism
 $$\eta\colon  \McN(X)\cong  \McN(X').$$
 Correspondingly,
% , from 
% $X\cong\maxspec(\McN(X))$ and 
% $X'\cong\maxspec(\McN(X'))$, 
 $\eta$ yields a homeomorphism
\begin{equation}
\label{equation:homeomorphism}
 \theta\colon X'\cong X
\end{equation}
%
%
%
%\commento{here were the last boldfaced phrases}
%
%
% 
  such that 
 $$
 \McN(X')/\mathfrak m\cong \McN(X)/\theta(\mathfrak m)
 \mbox{ for each $\mathfrak m\in \maxspec( \McN(X'))$}.
 $$
By  Theorem \ref{theorem:semisimple}(i), both 
quotients  
$\McN(X')/\mathfrak m$ and $ \McN(X)/\theta(\mathfrak m)$ 
 are uniquely isomorphic to the
same subalgebra $I_\mathfrak m$ of the standard MV-algebra $\interval$, and we can safely   write 
  $$
 \McN(X')/\mathfrak m = \McN(X)/\theta(\mathfrak m)
 = I_\mathfrak m
 \mbox{ for each }
 \mathfrak m\in \maxspec( \McN(X')).
 $$
 Thus  for every 
$x \in X'$,  $\theta$ preserves the
group  $$G_x= 
\mathbb Zx+\mathbb Z= \McN(X)/\mathfrak m_x $$
 generated 
 by  $x$ and the unit  1
 in the additive group $\mathbb R$. In symbols,
\begin{equation}
\label{equation:group-preserve}
G_x=G_{\theta(x)}.
\end{equation}
In particular, $\theta$ preserves the denominators of
rational points of $X'$, (if any).

\medskip

%%
%%
%\commento{needs elementary properties (a) and (b) 
%as in the proof for manifold without boundary. Moreover, it needs
%(c) that the injecttion preserves groups $G_x$.
% Then the  elementary ideal theory of MV-algebras,
% \cite[pp. 16-18]{cigdotmun},   + theory of semisimples  }
%%
%%
%%
%
%{\bf    Let  $id_X$ be the 
% identity function on $X$.  Let $p=\sigma(id_X)\in \McN(X)$.
% [[[Then $\sigma=-\circ p$. Hence $range(p)\subseteq X$]]] From ontoness of $\sigma$, some $q\in \McN(X)$
%  satisfies $q\circ p=id_X.$
%This
%shows that   $p$ is a
% one-one (continuous) map of $X$ onto a subset $X'$ of $X$.
% $X'$ coincides with the intersection of the zerosets of all functions
% in $\ker(\sigma).$ Since $\sigma$ is not one-one  $X'$
% is a proper subset of  $X$.  $q$ does the converse embedding
% of $X'$ into $X$.
%  [[[Restriction to $X'$ is an isomorphism
% of  $\McN(X)$ onto $\McN(X')$.]]]
%
%Easy is the preservation of rational denominators
%  because if it gets a proper divisor then
%it cannot be inverted.
%% ,   \cite[Remark after Definition 3.4]{cab-forum}.
%Thus  $p$ preserves the denominators of
% all rational points in its domain. 
% 
%$\sigma=-\circ p$ is isomorphism and so is
%$\sigma^{-1}=-\circ q$
%
%
%Moreover, 
%  for every 
%$x\in X$,  $p$ preserves the
%group  $$G_x= 
%\mathbb Zx+\mathbb Z $$
% generated 
% by  $x$ and the unit  1
% in the additive group $\mathbb R$. In symbols,
%$$G_x=G_{p (x)}.$$
%}

 Since  $X'$ is a proper subset of $X$, let
$$c \in X\setminus X'.$$

\medskip
\noindent {\it Case 1.} $c$ is rational. Say
$\den(c)=d$.
There are only finitely many points in
$[0,1]$ of  denominator $d$.
By  the  pigeonhole principle  it is impossible
that $\theta^{-1}$
maps the set $D\subseteq X$ of points
of  denominator $d$ one-one {\it onto}  $D\setminus \{c\}$.

\medskip
\noindent {\it Case 2.}
 $c$ is irrational.
For any two irrational points  $a,b,\in [0,1]$
 a routine verification shows 
 $G_a=G_b$ iff  $a=b$ or $a=1-b$.
 Upon writing $c\not=\theta^{-1}(c)=
\theta^{-1}(\theta^{-1}(c))$
 we obtain a contradiction with the fact that
  the one-one map $\theta$ of \eqref{equation:homeomorphism}
has the preservation property \eqref{equation:group-preserve}.

\smallskip
Having thus proved that both cases
are impossible, we conclude that  $A$ is hopfian.
\end{proof} 

\begin{figure} 
    \begin{center}                                     
    \includegraphics[height=8.0cm]{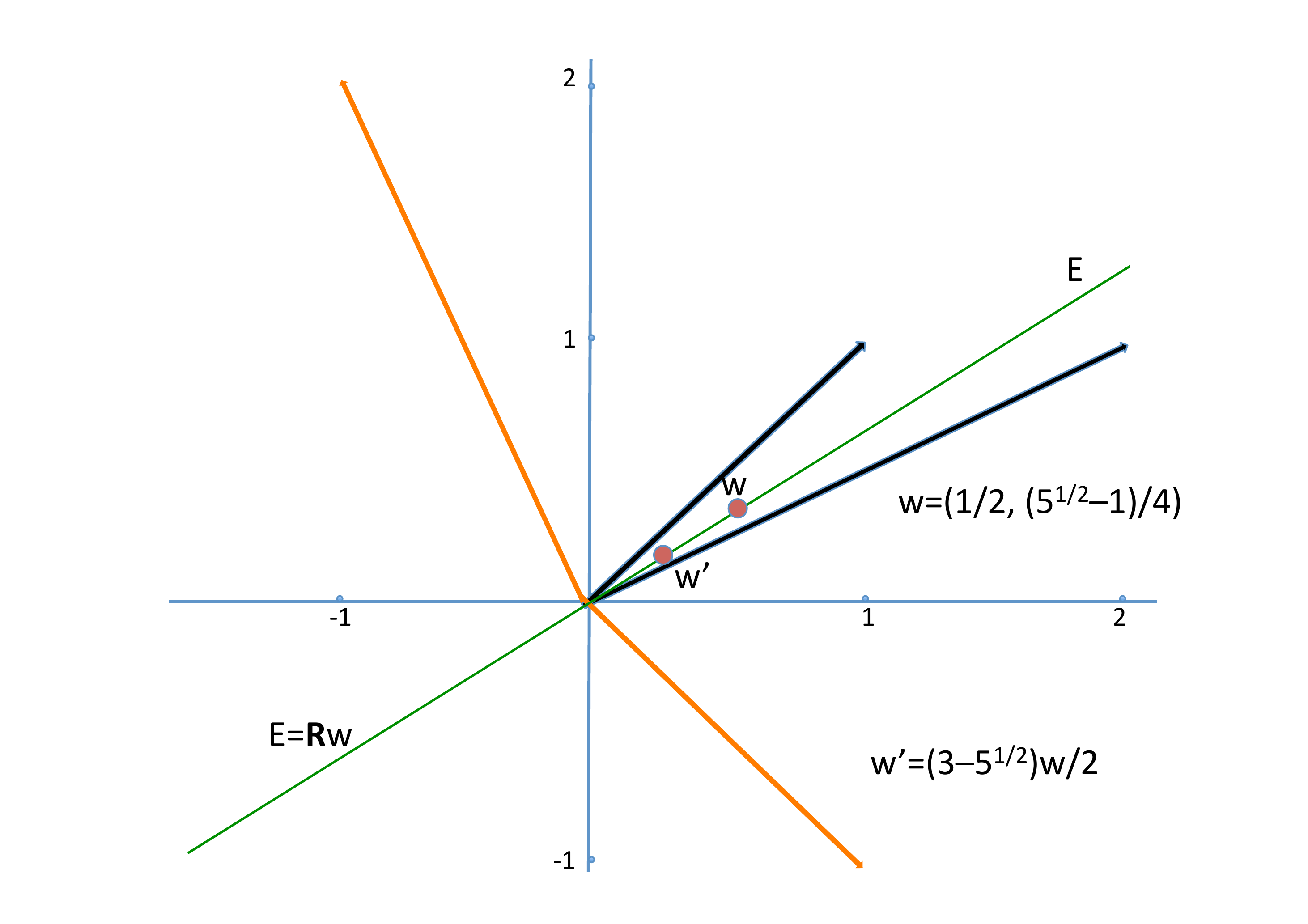}    
    \end{center}                                       
 \caption{\small  
The line $E$ is one of the two eigenspaces
of the  unimodular matrix  with rows (2,1) and (1,1),
and   eigenvalue $1< 1/\lambda \notin \mathbb Q$.
$E$ is also an eigenspace of the inverse matrix 
$L$.  For each $v \in E$,\,
  $Lv =\lambda v= (3-5^{1/2})v/2$.
Let $w=(1/2,(5^{1/2}-1)/{4})	\in E \cap [0,1/2]^2.$ ThenÊ
$L$ yields a  one-one $\mathbb Z$-map  $l^*$  of
 $[0,w] $ onto  $[0,w']=[0,Lw]\subsetneqq [0,w].$ 
  Theorem \ref{theorem:eigen} shows that the MV-algebra
  $\McN([0,w])$ is not hopfian.
}%%caption  
    \label{figure:eigen-tris}                                                 
   \end{figure}

As will be shown by our next result, 
when $n\geq 2$  the analog of
Proposition \ref{proposition:onegen}
fails in general  for $n$-generator semisimple 
MV-algebras.

\smallskip

Throughout the rest of this section   the adjective ``linear''
is  understood in the homogeneous sense.

\begin{theorem} 
\label{theorem:eigen}
Let  $L$ be an
$n \times n$ matrix with integer entries and determinant
equal to 
\,$\pm 1$.
Suppose $L$ has
 a  one-dimensional linear eigenspace $E$
 with eigenvalue $0< \lambda<1$, and 
 $E$ has a nonempty intersection with the
 interior of   $[0,1]^n$.
Then  the semisimple $n$-generator
MV-algebra $\McN(E\cap [0,1/2]^n)$
is not hopfian and is not residually finite.
\end{theorem}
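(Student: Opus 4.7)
\smallskip
\noindent\textbf{Proof plan.}
Orient the generator of $E$ so that all its entries are strictly positive (this is possible because $E$ meets the interior of $[0,1]^n$), and let $w$ be the farthest point of $E\cap[0,1/2]^n$ from the origin, so that $E\cap[0,1/2]^n=[0,w]$ and at least one coordinate of $w$ equals $1/2$. From $Lw=\lambda w$ and $0<\lambda<1$ it follows that $L([0,w])=[0,\lambda w]$ is a \emph{proper} subsegment of $[0,w]$. Because $L$ has integer entries and $\det L=\pm1$, its inverse $L^{-1}$ is an integer matrix as well. On $[0,w]$ each component of $Lx$ lies in $[0,\lambda w_i]\subset[0,1/2]$, so the map $\ell:=L\restrict[0,w]$ agrees on $[0,w]$ with the McNaughton map $x\mapsto(\min(1,\max(0,(Lx)_i)))_{i=1}^n$ defined on $[0,1]^n$; the same goes for $\ell^{-1}=L^{-1}\restrict[0,\lambda w]$. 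Thus $\ell$ is a $\mathbb Z$-homeomorphism of $[0,w]$ onto $[0,\lambda w]$, which by contravariant functoriality yields an MV-algebra isomorphism
$$\ell^{\ast}\colon\McN([0,\lambda w])\to\McN([0,w]),\quad g\mapsto g\circ\ell.$$

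Second, couple $\ell^{\ast}$ with the restriction homomorphism
$\rho\colon\McN([0,w])\to\McN([0,\lambda w])$, $f\mapsto f\restrict[0,\lambda w]$.
Every McNaughton function on $[0,\lambda w]$ is by definition the restriction of a McNaughton function on $[0,1]^n$, hence of one on $[0,w]$, so $\rho$ is surjective. The composite
$$\sigma:=\ell^{\ast}\circ\rho\colon\McN([0,w])\to\McN([0,w]),\qquad \sigma(f)(x)=f(Lx),$$
is therefore a surjective MV-endomorphism. It is \emph{not} injective: pick $N\in\mathbb N$ so large that the interval $(\lambda Nw_{1},Nw_{1})$, of length $(1-\lambda)Nw_{1}$, contains an integer $k$; then
$$\tilde h(x)=\max\bigl(0,\min(1,Nx_{1}-k)\bigr)$$
is a McNaughton function on $[0,1]^n$ vanishing on $\{x:x_{1}\le k/N\}\supset[0,\lambda w]$, while $\tilde h(w)>0$. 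Its restriction to $[0,w]$ is a nonzero element of $\ker\rho=\ker\sigma$. Hence $\McN(E\cap[0,1/2]^n)$ is not hopfian.

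For the final clause, note that $\McN([0,w])$ is generated by the $n$ restricted coordinate projections $\pi_{i}\restrict[0,w]$ and is semisimple (a subalgebra of $C([0,w],[0,1])$). By the contrapositive of Theorem~\ref{theorem:evans}, a finitely generated non-hopfian MV-algebra cannot be residually finite. Alternatively, apply Theorem~\ref{theorem:residually-finite} directly: $\lambda$ is an algebraic integer (a root of the monic integer characteristic polynomial of $L$) lying strictly in $(0,1)$, hence irrational because rational algebraic integers are ordinary integers; and if some nonzero $v\in E$ had rational coordinates, then $Lv$ would be rational while $\lambda v$ would have an irrational coordinate, a contradiction. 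Therefore $E\setminus\{0\}$ meets no rational point, the only rational point of $[0,w]$ is the origin, and the finite-rank maximal ideals are not dense in $\maxspec(\McN([0,w]))$.

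The single delicate step is producing a nonzero element of $\ker\rho$: because $w$ is typically irrational, $[0,\lambda w]$ is \emph{not} a rational polyhedron, so Corollary~2.10 of \cite{mun11} does not directly supply a McNaughton ``zero-indicator'' on it. The gap estimate $(1-\lambda)Nw_{1}>1$ (valid for $N$ large enough since $w_{1}>0$ and $\lambda<1$) is what allows us to cut $[0,w]$ with an integer-coefficient half-plane strictly between $[0,\lambda w]$ and $w$, bypassing this obstacle.
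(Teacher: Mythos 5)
Your proof is correct, and it reaches the same endomorphism $\sigma(f)=f\circ(L\restrict[0,w])$ as the paper, but by a genuinely different route at each of the three key steps. For surjectivity of $\sigma$, the paper verifies directly that each coordinate function lies in the range, by using the integrality of $L^{-1}$ to build McNaughton functions $k_i$ with $k_i\circ l=x_i$ on $E'$; you instead factor $\sigma$ as the restriction homomorphism $\rho\colon\McN([0,w])\to\McN([0,\lambda w])$ (surjective by definition of $\McN$ of a closed set) followed by the isomorphism induced by the $\mathbb Z$-homeomorphism $\ell$ -- the same integrality of $L^{-1}$ enters, but packaged more structurally, and as a bonus you get $\ker\sigma=\ker\rho$ for free. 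For non-injectivity, the paper separates $\lambda E'$ from $E'\setminus\lambda E'$ by an open rational simplex and invokes \cite[Corollary 2.10]{mun11}, whereas you cut with a single integer hyperplane $Nx_1=k$ wedged into the gap $(\lambda Nw_1,Nw_1)$, producing the explicit kernel element $\max(0,\min(1,Nx_1-k))$; this is more elementary and self-contained (your closing remark correctly flags why the rational-polyhedron machinery is not directly available on $[0,\lambda w]$). For the absence of nonzero rational points on $E$, the paper iterates $L$ and applies the pigeonhole principle to points of fixed denominator, while you observe that $\lambda$ is an algebraic integer in $(0,1)$, hence irrational, so $Lv=\lambda v$ cannot be rational for nonzero rational $v$ -- shorter, though it uses a (mild) fact from algebraic number theory that the paper avoids. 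Your preliminary normalization $E\cap[0,1/2]^n=[0,w]$ with all coordinates of the generator positive is justified by the hypothesis that $E$ meets the interior of $\cube$, and your two routes to non-residual-finiteness (via Theorem \ref{theorem:residually-finite}, or via the contrapositive of Theorem \ref{theorem:evans} once non-hopficity is established) are both valid.
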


\begin{proof}   $L$ acts on vectors
$x=(x_1,\ldots,x_n)\in \Rn$ as the linear
transformation  $x\mapsto Lx$, also denoted $L$. 
%$$
%\Lambda
%\colon x=(x_1,\ldots,x_n)\in \Rn\mapsto Lx\in \Rn.
%$$
By hypothesis,
 $L$ preserves denominators of
rational points.

Observe that no nonzero rational point
lies in $E$.
For otherwise (absurdum hypothesis) if 
$0\not=r \in E\cap \mathbb Q^n$  
then from $0< \lambda < 1$ we get
an infinite set  $r, \,\,L(r),\,\, L(L(r)),\,\,\ldots$
 of rational points in $\interval^n$
all of denominator $\den(r)$. This is
impossible,   because for each
$d=1,2,\ldots,$  the cube  $\cube$ contains only finitely many 
rational points of denominator $d$. 

Let us write for short  $E'=E\cap [0,1/2]^n.$
In view of  Theorem \ref{theorem:semisimple},
since  $E'$ has no nonzero rational point then a fortiori 
the finite rank maximal ideals
of the MV-algebra  $\McN(E')$ do not form a dense
set  in the maximal spectral space $\maxspec(\McN(E'))$.
By Theorem \ref{theorem:residually-finite}, the semisimple
MV-algebra  $\McN(E')$ is not residually finite.

Since   $0< \lambda <1$, 
there is an open neighborhood $\mathcal N$ of 
$E'$ in $\cube$ such that the restriction   
%$\Lambda'=
$L \restrict \mathcal N$ maps $ \mathcal N$  into $ \cube.$
Without loss of generality,  the closure of $\mathcal N$ 
is a rational polyhedron, whence
by \cite[Proposition 3.2]{mun11},  $L \restrict \mathcal N$
is extendible to a $\mathbb Z$-map 
%$\Lambda^*
$l  \colon \cube\to \cube$.
Let  the map $l^*\colon E'\to E'$
be defined by   
$$l^*=l\restrict E'=L\restrict E'.$$
For any   $f\in \McN(E')$
the composite function $f\circ l^*$ is 
a member of  $ \McN(E')$.
So  let the map $\sigma\colon  \McN(E')
\to  \McN(E')$  be defined by 
$$
\sigma(f)=  f \circ l^*, \mbox{ for all }  f\in  \McN(E'). 
$$ 

\medskip
\noindent
{\it Claim.}  $\sigma$ is a   homomorphism
of  $ \McN(E')$ onto itself.

As a matter of fact, the inverse  $L^{-1}$ determines a
 linear transformation 
 $x\mapsto L^{-1}x$ 
 of the vector space $\Rn$ one-one onto $\Rn$.
% 
%
%
%  Further,
%for all  $i=1,\ldots,n$  
%we have 
%$$
%(V_i\circ \Lambda)(x_1,\ldots,x_n)=x_i \mbox{ for all }
%x=(x_1,\ldots,x_n)\in \mathbb R^n.
%$$
Since  $L^{-1}$ is an integer matrix and
the nonzero extreme of the segment $E'$ lies in the interior of $\cube$,
for each $i=1,\dots,n$
there is a  McNaughton function
$k_i\colon \cube \to \interval$
such that 
$
(k_i\circ l)(x_1,\ldots,x_n)=x_i \mbox{ for all }  x \in E'.
$
The restriction $k_i^*=k_i\restrict E'$ is a McNaughton function
on $E'$ satisfying the identity
$
(k^*_i\circ l^*)(x_1,\ldots,x_n)=x_i \mbox{ for all }  x\in E'.
$
Since each
component of 
the identity $\mathbb Z$-map on $E'$
is in the range of $\sigma$ then, by
  definition of $\sigma$,   every
member of $\McN(E')$ is in the
range of $\sigma$, and our claim is settled.

\smallskip
Now,
$l^*$  is one-one but is not surjective, because it
 shrinks
$E'$ to the closed set  $l^*(E')=\lambda E' \subsetneqq E'$.  
 There is an open rational $n$-simplex $T\subseteq \cube$ such that
$T\cap (E'\setminus \lambda E' )\not=\emptyset$
and $T\cap \lambda E' =\emptyset.$
  By \cite[Corollary 2.10]{mun11}
   there is a McNaughton function  $g \in \McNn$
whose zeroset coincides with the (closed) rational
polyhedron  $\cube\setminus T.$
So $g$ vanishes identically on $\lambda E' $, but
does not vanish identically over  $E'.$
The restriction $g\restrict E'$ is a member of 
$\McN(E')$, and so is the composite function
$(g \restrict E')\circ l^* = \sigma(g)$.
Since $(g \restrict E')\circ l^* $
vanishes identically over $E'$,
  $g \restrict  E' $ is a nonzero member of $\ker(\sigma)$.  
We have just shown that the
surjective homomorphism $\sigma$   is
not injective. 
Thus $\McN(E')$ is not hopfian.
\end{proof}

See  Figure \ref{figure:eigen-tris} for an
example of a semisimple
non-hopfian two-generator MV-algebra.

\begin{corollary}
With the notation of  the foregoing
theorem, let the unital $\ell$-group $(G,u)$  be defined by
$
\Gamma(G,u) = \McN(E\cap [0,1/2]^n).
$
Then $(G,u)$ is finitely generated, semisimple and 
non-hopfian.
\end{corollary}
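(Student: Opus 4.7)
The plan is to transfer the three conclusions of Theorem \ref{theorem:eigen} from the MV-algebra $A=\McN(E\cap [0,1/2]^n)$ to its correspondent unital $\ell$-group $(G,u)=\Gamma^{-1}(A)$ via the categorical equivalence $\Gamma$. Once Theorem \ref{theorem:eigen} is in hand, essentially no geometry remains; this is a short functorial argument, entirely analogous to the passage from Corollary \ref{corollary:hopfians} to Corollary \ref{corollary:finpres-group} or from Theorem \ref{theorem:germ} to Corollary \ref{corollary:germ-group}.

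First I would observe that semisimplicity transfers directly: as recorded in Section \ref{section:examples}, $(G,u)$ is semisimple iff $\Gamma(G,u)$ is semisimple, so semisimplicity of $A$ (granted by Theorem \ref{theorem:eigen}) immediately yields semisimplicity of $(G,u)$. Second, finite generation of $(G,u)$ follows from the fact that the equivalence $\Gamma$ (Theorem \ref{theorem:gamma}) preserves finite generation in both directions; this is the same preservation property that is invoked in the proof of Corollary \ref{corollary:finpres-group}. Concretely, if $a_1,\dots,a_n\in[0,u]$ are elements whose images under $\Gamma$ generate the MV-algebra $A$, then $a_1,\dots,a_n,u$ generate $(G,u)$ as a unital $\ell$-group.

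Finally, for non-hopficity: the proof of Theorem \ref{theorem:eigen} explicitly constructs a surjective homomorphism $\sigma\colon A\to A$ whose kernel is nonzero (the function $g\restrict E'$ witnesses this). Since $\Gamma$ is an equivalence of categories, it preserves and reflects monomorphisms, epimorphisms, and isomorphisms. Hence the endomorphism $\tau\colon(G,u)\to(G,u)$ corresponding to $\sigma$ under the quasi-inverse of $\Gamma$ is surjective but not injective, showing that $(G,u)$ is not hopfian.

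No genuine obstacle is expected; the only point of care is to cite the correct preservation property of $\Gamma$ for each of the three assertions, so the corollary amounts to a clean one-paragraph application of Theorems \ref{theorem:eigen} and \ref{theorem:gamma}.
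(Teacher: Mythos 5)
Your proposal is correct and is exactly the argument the paper intends: the corollary is stated without proof precisely because it follows from Theorem \ref{theorem:eigen} by the same routine transfer along the equivalence $\Gamma$ that the paper uses in Corollaries \ref{corollary:epi-bis}, \ref{corollary:finpres-group} and \ref{corollary:germ-group}. All three preservation facts you cite (semisimplicity, finite generation, and the reflection of surjective-but-not-injective endomorphisms) are the ones recorded in Section \ref{section:examples} and Theorem \ref{theorem:gamma}, so nothing is missing.
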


\medskip

We refer to \cite[\S 7]{mun11} for free products of MV-algebras.
\medskip

 The  {\it Chang algebra\/} {\rm C}
$=\{0,\epsilon, 2\epsilon,\ldots,1-2\epsilon, 1-\epsilon,1\}$ is
defined as $\Gamma(\mathbb Z +_{lex} \mathbb Z, (1,0))$,
where  $+_{lex}$ denotes the lexicographic product,
\cite[p. 141]{bkw}.
% (also see \cite[9.1, p. 102]{mun11}).

\smallskip

\begin{theorem}
\label{theorem:non-hopf}
The following MV-algebras are not hopfian:
\begin{enumerate}
\item[(i)] Free MV-algebras over infinitely many free generators.

\smallskip
\item[(ii)] The free product ${\rm C}\amalg {\rm C}$. 

\smallskip 
\item[(iii)]
All  countable boolean algebras.  
\end{enumerate}
\end{theorem}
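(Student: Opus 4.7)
The plan is to exhibit, in each of the three cases, an MV-algebra endomorphism that is surjective but not injective.

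For (i), let $F$ be the free MV-algebra on an infinite set $X$ of free generators. Fix any $x_0\in X$ and a bijection $\sigma\colon X\setminus\{x_0\}\to X$, which exists because $X$ is infinite. By the universal property of $F$, the assignment $x_0\mapsto 0$, $x\mapsto\sigma(x)$ for $x\neq x_0$ extends uniquely to an endomorphism $\phi\colon F\to F$. Every free generator lies in $\phi(F)$ via $\sigma^{-1}$, so $\phi$ is onto; since $x_0\neq 0$ in $F$, we have $x_0\in\ker\phi$, so $\phi$ is not injective.

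For (ii), let $a=j_1(\epsilon)$ and $b=j_2(\epsilon)$ be the images of the Chang generator under the two coproduct inclusions $j_1,j_2\colon \mathrm{C}\to \mathrm{C}\amalg \mathrm{C}$. The radical of $\mathrm{C}\amalg \mathrm{C}$ is an ideal, so it contains $a$, $b$, and therefore $a\oplus b$; in particular $a\oplus b$ is infinitesimal, and its multiples $k(a\oplus b)$ are pairwise distinct (they project to $k\epsilon$ under each of the two retractions $\mathrm{C}\amalg \mathrm{C}\to \mathrm{C}$ that collapse the other factor). Hence $\epsilon\mapsto a\oplus b$ defines a valid MV-homomorphism $\mathrm{C}\to \mathrm{C}\amalg \mathrm{C}$, and pairing it with the inclusion $j_2$ yields, by the coproduct universal property, an endomorphism $\phi\colon \mathrm{C}\amalg \mathrm{C}\to \mathrm{C}\amalg \mathrm{C}$ with $\phi(a)=a\oplus b$ and $\phi(b)=b$. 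Because $a+b<u$ in the enveloping $\ell$-group (as $a\oplus b$ is infinitesimal), the identity $(a\oplus b)\ominus b=a$ holds, putting both $a$ and $b$ in $\phi(\mathrm{C}\amalg \mathrm{C})$, so $\phi$ is surjective. For non-injectivity, the retraction $\rho\colon \mathrm{C}\amalg \mathrm{C}\to \mathrm{C}$ with $\rho(a)=0$, $\rho(b)=\epsilon$ shows $b\not\leq a$ in $\mathrm{C}\amalg \mathrm{C}$, so $a\wedge b<b$; yet
\[
\phi(a\wedge b)=\phi(a)\wedge\phi(b)=(a\oplus b)\wedge b = b = \phi(b),
\]
identifying two distinct elements.

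For (iii), I pass through Stone duality: a countable (infinite) Boolean algebra $B$ corresponds to its Stone space $X=S(B)$, a second-countable Stone space, with endomorphisms of $B$ corresponding contravariantly to continuous self-maps of $X$. If $B$ is countably infinite then so is $X$; by the Cantor--Bendixson theorem $X$ is scattered, and an induction on Cantor--Bendixson rank shows that $X$ has infinitely many isolated points. Compactness then yields a convergent sequence $p_1,p_2,\ldots\to p_\infty$ of pairwise distinct isolated points $p_k$ with limit $p_\infty\in X^{(1)}$. Define $f\colon X\to X$ by $f(p_1)=p_\infty$, $f(p_{k+1})=p_k$ for $k\geq 1$, and $f(x)=x$ otherwise. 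Continuity at $p_\infty$ follows from $p_{k-1}\to p_\infty$; continuity at isolated points is automatic; continuity at any other non-isolated point is clear since $f$ is the identity on a neighborhood. The map $f$ is surjective ($p_k=f(p_{k+1})$ and all remaining points are fixed) and not injective ($f(p_1)=p_\infty=f(p_\infty)$ with $p_1\neq p_\infty$). Dualizing yields the required surjective non-injective endomorphism of $B$. The main obstacle is the verification step in (ii): that $a\oplus b$ is a \emph{non-torsion} infinitesimal in $\mathrm{C}\amalg \mathrm{C}$ and that $b\not\leq a$ there. Both reduce to coproduct universal-property arguments applied to the two canonical retractions $\mathrm{C}\amalg \mathrm{C}\to \mathrm{C}$, which detect the independence of the two infinitesimal directions.
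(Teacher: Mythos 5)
Your parts (i) and (ii) are correct. Part (i) is essentially the paper's own argument (a shift/collapse of the free generating set; the paper keeps $\pi_1$ fixed and shifts the rest, you kill one generator and re-index, which works just as well). Part (ii) takes a genuinely different route: the paper realizes ${\rm C}\amalg{\rm C}$ concretely as the germinal quotient $\McN([0,1]^2)/\mathfrak o_{(0,0)}$, passes via $\Gamma$ to a lexicographic unital $\ell$-group of piecewise homogeneous linear functions on the first quadrant, and obtains the surjective non-injective endomorphism by composing with the shear $(x,y)\mapsto(x,y+x)$. Your endomorphism $\phi(a)=a\oplus b$, $\phi(b)=b$ is exactly the abstract shadow of that shear, and your verification stays entirely inside the coproduct universal property: $a\oplus b$ lies in the radical (and for any radical element $c$ the assignment $\epsilon\mapsto c$ does extend to a homomorphism out of ${\rm C}$), so $\phi$ exists; since $a\oplus b<u$ one gets $(a\oplus b)\ominus b=a$ and hence surjectivity; and $\phi(a\wedge b)=(a\oplus b)\wedge b=b=\phi(b)$ with $a\wedge b\neq b$ (detected by the retraction killing $a$) gives non-injectivity. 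What you gain is independence from the germinal and lexicographic machinery; what the paper's concrete model buys is the explicit function-theoretic description of ${\rm C}\amalg{\rm C}$ that it reuses later for the non-hopfian $\ell$-group quotient in Corollary \ref{corollary:hopfian-freel}(ii).

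Part (iii), however, contains two genuine errors. First, the Stone space of a countably infinite Boolean algebra need not be countable, hence need not be scattered: the free Boolean algebra on countably many generators is countable, and its Stone space is the Cantor set $2^{\omega}$, which is uncountable and has no isolated points, so the Cantor--Bendixson argument never gets started for such $B$. Second, the duality is pointed the wrong way: under Stone duality a \emph{surjective non-injective} endomorphism of $B$ corresponds to an \emph{injective non-surjective} continuous self-map of $X=S(B)$ (equivalently, a homeomorphism of $X$ onto a proper closed subspace), whereas you construct a \emph{surjective non-injective} self-map of $X$; dualizing that produces an injective non-surjective endomorphism of $B$, which witnesses failure of \emph{co}-hopficity, not of hopficity. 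For the Cantor set the correct dual object is easy ($s\mapsto 0s$ is a continuous injection onto a proper clopen subset), and in your scattered situation one would want to shift the isolated points \emph{away} from the limit rather than toward it; but a uniform argument covering every countable Boolean algebra is precisely the nontrivial content here, which is why the paper simply invokes \cite{loa}.
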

\begin{proof}
(i)  
Let $Free_\omega=\McN(\interval^\omega)$ be the 
free   MV-algebra over the free generating set
$\{\pi_1,\pi_2,\ldots\}$,  where  $\pi_i$
is the $i$th coordinate function over the Hilbert cube
$\interval^\omega$.  Let $\sigma$  be the unique endomorphism
of $Free_\omega$ extending the map
$\pi_1 \mapsto \pi_1,\,\, \pi_{n+1}\mapsto \pi_n, \,\,n=1,2,\ldots$.
Then $\sigma$ is a surjective non-injective
homomorphism, showing that $Free_\omega$  is not 
hopfian. The case of $Free_\kappa$ for a  cardinal
$\kappa>\omega$
is similar.

\smallskip
(ii)  For this proof we
 assume familiarity with MV-algebraic free products
(\cite[\S 7]{mun11}) 
 and with the basic properties of the
  free  $\ell$-group  $\mathcal A_2$
over two free generators:
this is the  $\ell$-group  of all continuous piecewise {\it homogeneous}
linear functions  $f\colon \mathbb R^2 \to \mathbb R,$ where each piece
has integer coefficients,  \cite[Theorem 6.3]{andfei}.
Directional derivatives and
values  of  germs are  
understood  and denoted as in the proof of 
Theorem \ref{theorem:germ}(iii).

It is not hard to construct an isomorphism $\gamma$ of
 Chang algebra C onto  the germinal quotient 
$\McN(\interval)/\mathfrak o_0$ of $\McN(\interval)$
at the origin 0. To this purpose, we note that   
two  McNaughton functions  $f,g\in \McN(\interval)$
have the same germ at 0  iff $f(0)=g(0)$  and  
  $\partial f(0)/\partial x^+=\partial g(0)/\partial x^+.$
So any germ $\Check f$ with value 0  
is uniquely determined by the value
$\partial\Check f(0)/\partial x^+\in \{0,1,2,\ldots\}$
of its
  directional derivative along the positive direction of
  the $x$-axis.
  The map $\gamma$  sends  the element $n\epsilon \in$ C to
  the germ  $n\Check{x}$ whose directional derivative is $n$.  
  In particular, the smallest infinitesimal $\epsilon$ of C corresponds to
 the germ $\Check x$ of the identity function $x$
 on $\interval$.
 Symmetrically,  $\gamma$  sends the element  $1-m\epsilon\in$ C
 to the germ  $\neg m\Check x$ with  value 1 and directional derivative equal to $-m.$
 We will henceforth identify C and $\McN(\interval)/\mathfrak o_0.$
 
To construct the free product MV-algebra
  C$\amalg$C, following 
\cite[Theorem 7.1]{mun11},  for every
$f\in \mathfrak o_0$ we first embed $f$ into
 $\McN(\interval^2)$ by
 {\it cylindrification
along the $y$ axis}: in other words, we transform
$f$ into the function $f\circ \pi_1\in \McN(\interval^2)$, with
$\pi_1\colon \interval^2\to \interval$ the first coordinate function.
We similarly transform 
$f$  into the function 
$f\circ \pi_2$, its cylindrification along the $x$ axis.
 %
%\bigskip
%we prepare  two copies
% $\mathfrak o'_0$ and  $\mathfrak o''_0$ of the
%germinal ideal $\mathfrak o_0$. Then we embed
%each germ  $f/\mathfrak o_0$ of  $\mathfrak o'_0$
%into   $\McN(\interval^2)$ by cylindrification
%along $y$.
%Similarly, we embed
%every germ of $\mathfrak o''_0$
%into   $\McN(\interval^2)$
%by cylindrification along   $x$.
 %
A routine verification shows 
 that the ideal of $\McN(\interval^2)$ generated by
the set of all these cylindrified functions
%$\mathfrak o'_0\cup \mathfrak o''_0$ 
coincides with the germinal ideal $\mathfrak o_{(0,0)}$
of $\McN(\interval^2)$ at the origin $(0,0)\in \interval ^2$.
Every germ  $\Check g\in \McN(\interval^2)/\mathfrak o_{(0,0)}$
can only have the two possible values  0 and 1.
We have an isomorphism   
\begin{equation}
\label{equation:amalg}
\mbox{C$\amalg$C}
\cong \McN(\interval^2)/\mathfrak o_{(0,0)}.
\end{equation}

Let $Q$ denote the first  quadrant in $\mathbb R^2.$
A germ   $\Check f\in \McN(\interval^2)/\mathfrak o_{(0,0)}$ 
having value 0 is completely characterized
by the unique continuous piecewise {\it homogeneous} linear function  
$\Vec f\colon Q\to \mathbb R_{\geq 0}$
whose directional derivatives at $(0,0)$
along all (unit) vectors $d\in Q$ coincide with those
of  $f$. Each linear
piece of $\Vec f$ has integer coefficients.  If
$\Check g=\Check f$ then $\Vec g=\Vec f$.
Conversely, for any continuous 
piecewise homogeneous linear function  
$l \colon Q\to \mathbb R_{\geq 0}$ with integer coefficients
there is precisely one
germ $\Check h \in \McN(\interval^2)/\mathfrak o_{(0,0)}$
%\,\,(with $h\in \McN(\interval^2)$)\,\, 
such that $l =\Vec h$.
%  (for some $h\in \McN(\interval^2)$). 
This one-one correspondence provides a convenient 
identification of  
the set of germs in 
$\McN(\interval^2)/\mathfrak o_{(0,0)}$  having value 0 at the origin,
with  the
set $\mathsf P $ of continuous   
piecewise homogeneous linear functions $p \geq 0$ 
with integer coefficients, defined
on $Q$. Elements of
$\mathsf P$ are acted upon by the 
pointwise addition, truncated substraction,
 and max min operations of $\mathbb R.$
Similarly, each germ in
$\McN(\interval^2)/\mathfrak o_{(0,0)}$ 
 with value 1 can be
identified with  $1-p$ for a uniquely determined  $p\in\mathsf P$. 
Accordingly, we may write 
\begin{equation}
\label{equation:visuale}
\McN(\interval^2)/\mathfrak o_{(0,0)}
={\mathsf P} \cup (1-{\mathsf P}).
\end{equation}

Let $H$ be the
 $\ell$-group of continuous 
 piecewise homogeneous linear   functions
$f\colon Q \to \mathbb R$  with
integer coefficients. Recalling the definition of
the free $\ell$-group  $\mathcal A_2$
we can write 
\begin{equation}
\label{equation:acca}
H=\mathcal A_2\restrict Q =\{f\restrict Q\mid f\in \mathcal A_2\}
\,\,\,\mbox{   and   }\,\,\, \mathsf{P}=H^+=\{p\in H\mid p\geq 0\}.
\end{equation}
%Note that $\mathsf P$ consists precisely of the positive elements
%of $H,$ 
%\begin{equation}
%\label{equation:accaplus}
%\mathsf{P}=H^+.
% \end{equation}
%
Let the    unital $\ell$-group $(G,u)$ be defined as
\begin{equation}
\label{equation:ebbene}
(G,u)=(\mathbb Z +_{lex} H, 1),
\end{equation}
where  $+_{lex}$ is lexicographic product and 
  the distinguished unit $u$ is given by the constant
function 1 over $Q$.  For each integer $m$,  $G$ has a copy
of $H$ at level $m$.  In more detail, each element of $G$ is a function
$l\colon Q\to \mathbb R$ of the form  $m+f$, where
$m\in \mathbb Z$ and $f\in H.$  Elements  $l',l''$ of $G$ are acted upon
by pointwise sum and subtraction, and by the {\it lexicographic
lattice order}: thus  $l'\vee l''$ is the pointwise max of
$l' $ and $l''$ in case  $l'(0)=l''(0)$;   if  $l'(0)> l''(0)$, then
$l'\vee l'' = l';$ if $l'(0) <  l''(0)$ then $l'\vee l'' = l''.$ 
By  \eqref{equation:amalg}-\eqref{equation:acca} we have isomorphisms
\begin{equation}
\label{equation:ebbene-bis}
\Gamma(G,u)\cong \McN(\interval^2)/\mathfrak o_{(0,0)}
\cong {\rm C}\amalg{\rm C}.
\end{equation}
 
 \medskip
To conclude the proof it suffices to exhibit a surjectve non-injective
unital 
endomorphism $\sigma$ of $(G,u)$.
Let the map $q$ be defined by
$$
q\colon (x,y)\in Q \mapsto (x,y+x)\in Q.
$$ 
Observe that $q\in G$.
Let the homomorphism $\sigma\colon (G,u)\to (G,u)$ be defined by
$$
\sigma\colon f\in (G,u)\mapsto f\circ q\in (G,u).
$$
A direct  inspection shows that $\sigma$  is surjective:
As a  matter of fact,
the first coordinate function  $x$ on $Q$  is obtainable
as $x\circ q.$ The second coordinate function
$y$ is obtainable as $((y-x)\vee 0)\circ q.$
Thus  $\range(\sigma)=G.$ 
However,  $\sigma$ is not injective: for instance,
 the nonzero function
$k \in G$ given by $k(x,y)=(x-y)\vee 0$ belongs to 
$\ker(\sigma)$.
Thus  $(G,u)$ is not hopfian. From
\eqref{equation:ebbene}-\eqref{equation:ebbene-bis} 
and  Theorem \ref{theorem:gamma}  
it follows that  C$\amalg$C is not hopfian.

\medskip
(iii)  \cite[Corollary 4]{loa}.
\end{proof}

 \medskip
The unital $\ell$-group $(M,1)$ was introduced
in \cite[\S 4]{mun-jfa} as the $\Gamma$ correspondent
of the free countably generated MV-algebra, 
$(M,1)=\Gamma(\McN(\interval^\omega))$.
 By Theorems \ref{theorem:non-hopf}(i)
and  \ref{theorem:gamma} we have:

\begin{corollary}
\label{corollary:nonhopf-group}
$(M,1)$
 is not hopfian. 
\end{corollary}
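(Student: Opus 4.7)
The plan is to transport the non-hopfian witness for $\McN(\interval^\omega)$ across the categorical equivalence $\Gamma$. More precisely, Theorem \ref{theorem:non-hopf}(i) provides an explicit surjective, non-injective endomorphism $\sigma$ of $Free_\omega=\McN(\interval^\omega)$, namely the unique MV-homomorphism extending $\pi_1\mapsto\pi_1$ and $\pi_{n+1}\mapsto\pi_n$ for $n\geq 1$. The goal is to show that this $\sigma$ lifts through $\Gamma$ to a surjective, non-injective unital $\ell$-homomorphism of $(M,1)$.

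The key step is to invoke Theorem \ref{theorem:gamma}, which asserts that $\Gamma$ is a categorical equivalence between unital $\ell$-groups and MV-algebras. Since equivalences of categories preserve and reflect all categorical properties of morphisms, in particular they preserve and reflect epimorphisms and monomorphisms. In the concrete categories at hand, Theorem \ref{theorem:gamma} (together with the explicit description of $\Gamma$ on morphisms) tells us that a unital $\ell$-homomorphism $\tau\colon (G,u)\to (H,v)$ is surjective (respectively injective) if and only if its $\Gamma$-image $\Gamma(\tau)\colon \Gamma(G,u)\to \Gamma(H,v)$ is surjective (respectively injective), because $\Gamma(\tau)$ is essentially the restriction of $\tau$ to the unit interval $[0,u]$ and both structures are generated by their unit intervals.

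Applying the inverse equivalence to $\sigma$, using the identification $\Gamma(M,1)=\McN(\interval^\omega)$, produces a unital $\ell$-endomorphism $\widehat\sigma\colon (M,1)\to (M,1)$ with $\Gamma(\widehat\sigma)=\sigma$. By the preservation properties just recalled, $\widehat\sigma$ is surjective (since $\sigma$ is) and fails to be injective (since $\sigma$ does). Therefore $(M,1)$ admits a proper quotient isomorphic to itself, witnessing that it is not hopfian.

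There is essentially no obstacle here: the whole argument is a one-line transfer through $\Gamma$, and the only point that requires care is citing the correct preservation properties of the functor $\Gamma$ (surjectivity and injectivity of homomorphisms), which are part of Theorem \ref{theorem:gamma}. All the creative work was already done in the proof of Theorem \ref{theorem:non-hopf}(i).
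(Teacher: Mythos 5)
Your proposal is correct and follows exactly the paper's route: the paper derives this corollary directly from Theorem \ref{theorem:non-hopf}(i) together with the categorical equivalence $\Gamma$ of Theorem \ref{theorem:gamma}, which is precisely the transfer argument you spell out. The only difference is that you make explicit the (routine) fact that $\Gamma$ preserves and reflects surjectivity and injectivity of homomorphisms, which the paper leaves implicit.
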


\medskip 
In Corollary  \ref{corollary:epi}  the hopfian property
of an MV-algebra 
$A$ is shown to be a consequence of  the following
three conditions:
\begin{itemize}
\item[(a)]
$A$  is  semisimple;

\smallskip
\item[(b)]
$A$ is finitely generated;

\smallskip
\item[(c)]
The  finite rank maximal ideals of $A$ are dense
in $\maxspec(A)$.

\end{itemize}
Our next result exhibits  hopfian and non-hopfian
algebras satisfying any  two of these conditions,
together with the negation of the third one.  

\begin{corollary}
\label{corollary:seven}
Each of the following classes of {\em algebras}
(indifferently meaning MV-algebras or unital $\ell$-groups)
contains a hopfian and a non-hopfian member:
\begin{itemize}

\item[($\bar{\rm a}$bc)]   Non-semisimple, finitely generated
 algebras whose maximal ideals
of finite rank are dense.

\item[(a$\bar{\rm b}$c)]  Semisimple 
not finitely generated algebras, 
whose maximal ideals of finite
rank are  dense.

\item[(ab$\bar{\rm c}$)]  Semisimple,
finitely generated algebras where maximals
of finite rank are not dense.
\end{itemize}
\end{corollary}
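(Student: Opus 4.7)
I will exhibit, for each of the three classes, a hopfian and a non-hopfian MV-algebra; the unital $\ell$-group counterparts then follow from the $\Gamma$-equivalence of Theorem \ref{theorem:gamma}, which preserves finite generation, semisimplicity, and the finite-rank stratification of the maximal spectrum. For class ($\bar{\rm a}$bc), the Chang algebra ${\rm C}$ is finitely generated by $\varepsilon$, non-semisimple, and has its unique maximal ideal of finite rank; since $\spec({\rm C})$ has only two elements, ${\rm C}$ is hopfian by Theorem \ref{theorem:germ}(ii). Its free product ${\rm C}\amalg{\rm C}\cong \McN([0,1]^2)/\mathfrak o_{(0,0)}$, as identified in the proof of Theorem \ref{theorem:non-hopf}(ii), is 2-generated and non-semisimple with a single finite-rank maximal ideal, and is non-hopfian by that theorem.

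For class (ab$\bar{\rm c}$), the semisimple $n$-generator algebra $\McN(E\cap[0,1/2]^n)$ of Theorem \ref{theorem:eigen} is a non-hopfian witness. For a hopfian witness, fix any irrational $\alpha\in(0,1)$ and take the one-generator simple algebra $\McN(\{\alpha\})$, whose unique maximal ideal has infinite rank (so finite-rank maximals fail to be dense in the singleton $\maxspec$) and which is hopfian by Proposition \ref{proposition:onegen}.

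For class (a$\bar{\rm b}$c), the free $\omega$-generated MV-algebra $\McN([0,1]^\omega)$ is semisimple, not finitely generated, has rational (hence finite-rank-maximal) points dense in the Hilbert cube, and is non-hopfian by Theorem \ref{theorem:non-hopf}(i). For a hopfian representative I propose the infinite product $A=\prod_{k=1}^\infty L_k$ of the pairwise non-isomorphic finite MV-chains $L_k=\{0,1/k,\ldots,1\}$. Then $A$ is semisimple (a product of simples) and uncountable, hence not finitely generated; the principal maximal ideals $\mathfrak m_k=\{a\in A\colon a_k=0\}$ satisfy $A/\mathfrak m_k\cong L_k$, are therefore of finite rank, and are dense in $\maxspec(A)$ since $\bigcap_k\mathfrak m_k=\{0\}$. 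The hopfian property will follow by classifying MV-homomorphisms $\psi\colon A\to L_m$: the image of $\psi$ is a subalgebra $L_d\subseteq L_m$ with $d\mid m$, so the kernel is a maximal ideal with finite quotient $L_d$; a non-principal ultrafilter on $\mathbb N$ cannot give such a quotient, because $|L_k|\to\infty$ forces its associated ultraproduct to be infinite, so $\psi$ must factor as the principal projection $\pi_d\colon A\to L_d$ followed by the unique inclusion $L_d\hookrightarrow L_m$. Given any surjective endomorphism $\sigma$ of $A$, the image in the $m$th coordinate is a proper subalgebra of $L_m$ unless $d_m=m$; surjectivity of $\sigma$ thus forces $d_m=m$ for every $m$, whence $\sigma=\mathrm{id}_A$.

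The main obstacle is producing the hopfian witness in class (a$\bar{\rm b}$c): natural candidates such as $\McN([0,1]^\omega)$ or $L^{\mathbb N}$ for a single finite factor $L$ admit shift-type surjective endomorphisms, so rigidity must be engineered by choosing infinitely many pairwise non-isomorphic factors $L_k$, after which the endomorphism classification collapses to the observation that the identity is the only choice of $d$ making every coordinate image full.
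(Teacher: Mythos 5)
Your witnesses for classes ($\bar{\rm a}$bc) and (ab$\bar{\rm c}$) are essentially the paper's own: Chang's ${\rm C}$ versus ${\rm C}\amalg{\rm C}$ for the first, and a one-generator simple algebra on an irrational point versus the eigenspace algebra of Theorem \ref{theorem:eigen} for the third (the paper uses the subalgebra of $\interval$ generated by $\pi/4$, which is your $\McN(\{\alpha\})$ in different clothing, citing Theorem \ref{theorem:germ}(i) rather than Proposition \ref{proposition:onegen}). Where you genuinely diverge is class (a$\bar{\rm b}$c): the paper disposes of it in one line by citing Loats --- countable Boolean algebras are non-hopfian, and Loats' uncountable atomic Boolean algebra is hopfian; every Boolean algebra is semisimple with all maximal ideals of rank $1$, so both conditions (a) and (c) are automatic. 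Your non-hopfian substitute $\McN(\interval^\omega)$ is fine, with one small imprecision: a point of the Hilbert cube with all coordinates rational need not yield a finite-rank maximal ideal (unbounded denominators give an infinite quotient), so you should take, say, the eventually-zero rational points of bounded denominator, which are still dense. Your hopfian substitute $\prod_k L_k$ is a genuinely new, self-contained construction that avoids the external reference, and the endgame (pairwise non-isomorphic factors force every coordinate of a surjective endomorphism to be the identity projection) is a nice rigidity argument.

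However, the key step of that construction has a real gap. You rule out non-principal ultrafilters by saying that $|L_k|\to\infty$ forces the associated ultraproduct to be infinite, hence no finite quotient $L_d$ can arise. But an infinite totally ordered MV-algebra can perfectly well have a finite simple quotient --- the Chang algebra ${\rm C}$ itself is infinite and its quotient by its unique maximal ideal is $\{0,1\}$. So infinitude of the ultraproduct $\prod_k L_k/\mathcal U$ does not by itself exclude a homomorphism onto some $L_d$. What you actually need is that the quotient of this ultraproduct by its unique maximal ideal is infinite: writing $L_k=\Gamma(\mathbb Z,k)$, the archimedean quotient of $\prod_k L_k/\mathcal U$ is the set of $\mathcal U$-limits $\lim_{\mathcal U} a_k/k$, which contains $\lim_{\mathcal U}\lfloor rk\rfloor/k=r$ for every $r\in[0,1]$ and hence is all of $[0,1]$. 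With that standard-part argument in place, any homomorphism $A\to L_m$ whose idempotent trace is a non-principal ultrafilter would factor through an infinite simple algebra, a contradiction, and the rest of your classification (kernel $=\mathfrak m_d$, image the unique copy of $L_d$ in $L_m$ with $d\mid m$) goes through. So the example is salvageable, but as written the decisive step is not justified.
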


\begin{proof}  We first deal with MV-algebras.

\smallskip
($\bar{\rm a}$bc) The 
 Chang algebra C
 is a hopfian example,  by Theorem \ref{theorem:germ}(ii).
The free product MV-algebra
 C$\amalg $C is a non-hopfian 
 example, by Theorem \ref{theorem:non-hopf}(ii).

\smallskip
(a$\bar{\rm b}$c)  A hopfian example is given
by  the uncountable atomic
boolean algebra in \cite[Theorem 6]{loa}.
Non-hopfian examples are given by
all countable boolean algebras,
 (Theorem \ref{theorem:non-hopf}(iii)).

\smallskip
(ab$\bar{\rm c}$)  A hopfian example is given by 
 the subalgebra of  $\interval$
generated by   $\pi/4$,
(Theorem \ref{theorem:germ}(i)).
Non-hopfian examples 
are given by the
MV-algebras considered in 
Theorem \ref{theorem:eigen}, notably, the
two-generator
MV-algebra of Figure \ref{figure:eigen-tris}.

\smallskip
For hopfian and non-hopfian examples of unital $\ell$-groups 
one routinely takes the
 $\Gamma$
 correspondents of the above MV-algebras,
in the light of  Theorem \ref{theorem:gamma}. 
\end{proof}

\section{Applications to $\ell$-groups}
\label{section:applications1}
%%%%%%%%%%%%%%%%%%%%%%%%%%%%%%%%

By an {\it $\ell$-group} $G$ we mean a lattice-ordered abelian group.
$G$ does not have a distinguished unit---indeed, $G$ need not
have a unit.
Our main aim in this section is to show that finitely generated
free $\ell$-groups are hopfian and to
characterize their free generating sets.

The  {\it free  $n$-generator
$\ell$-group}  $\mathcal A_n$ consists of all continuous
 piecewise  homogeneous linear functions on $\mathbb R^n$
with integer
coefficients, \cite[Theorem 6.3]{andfei}.
 The maximal spectral space
 of $\mathcal A_n$ is homeomorphic to the $(n-1)$-sphere  $S^{n-1}$. 

   It should be noted that the two categories
  of $\ell$-groups and unital $\ell$-groups have 
  more differences than similarities: 
  While $\ell$-groups
  are definable by equations, the archimedean property
  of the unit in unital $\ell$-groups is not even definable in
  first-order logic. The maximal spectral space of
  an $\ell$-group may be empty, which is never the
  case for unital $\ell$-groups.
 From the Baker-Beynon duality,
 \cite{bak, bey-uno, bey-due}  it easily follows that
 finitely presented
  $\ell$-groups are dual to rational polyhedra
  with piecewise affine linear {\it rational} maps, while finitely presented
  unital $\ell$-groups are dual to rational polyhedra
  with piecewise affine linear {\it integral}  maps. This endows
  the latter dual pair (but not the former) with an interesting
   measure
  \cite[\S 14]{mun11}, 
  and a rich theory of ``states'', \cite[\S 10]{mun11}. 
While the Baker-Beynon duality  also  implies that  finitely
  presented  and  
 finitely generated projective $\ell$-groups coincide, this
  is  far from  true of finitely generated projective
  unital $\ell$-groups: as a matter of fact, their analysis is a tour de force
in algebraic topology,  \cite[\S 17 and references therein]{mun11}.

This said,  the hopfian property of
 finitely generated free $\ell$-groups will be obtained from
the proof that
finitely generated free unital $\ell$-groups and MV-algebras
are hopfian.

%We assume the reader has some acquaintance with the
%Baker-Beynon duality.  

\smallskip 
We  prepare the following lemma:

\begin{lemma} 
\label{lemma:main}
Every
$n$-element generating set of the free $n$-generator
MV-algebra  $\McNn$
is a free generating set.
\end{lemma}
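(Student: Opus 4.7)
The plan is to deduce the lemma directly from hopficity of the free $n$-generator MV-algebra, which was established in Corollary \ref{corollary:hopfians}. The argument is short and will hinge on the interplay between the universal property of the free algebra and the automatic injectivity supplied by hopficity.

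First I would fix a purported generating set $g_{1},\ldots,g_{n}\in\McNn$ and recall that $\McNn$ is the free MV-algebra on the coordinate projections $\pi_{1},\ldots,\pi_{n}$ (this is precisely the content of McNaughton's theorem, \cite[9.1.5]{cigdotmun}). By the universal property of the free algebra, the assignment $\pi_{i}\mapsto g_{i}$ extends uniquely to a homomorphism $\sigma\colon\McNn\to\McNn$. Since $\{g_{1},\ldots,g_{n}\}$ generates $\McNn$, every element of $\McNn$ lies in the subalgebra $\sigma(\McNn)$, whence $\sigma$ is surjective.

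Next I would invoke Corollary \ref{corollary:hopfians}, which asserts that every finitely generated free MV-algebra is hopfian. Applied to $\McNn$, this forces the surjective endomorphism $\sigma$ to be injective, hence an automorphism.

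Finally I would argue that the automorphic image of a free generating set is itself a free generating set. Concretely, given any MV-algebra $B$ and any $n$-tuple $(b_{1},\ldots,b_{n})$ in $B$, the universal property of $\pi_{1},\ldots,\pi_{n}$ yields a unique homomorphism $\varphi\colon\McNn\to B$ with $\varphi(\pi_{i})=b_{i}$; then $\varphi\circ\sigma^{-1}\colon\McNn\to B$ is the unique homomorphism sending $g_{i}\mapsto b_{i}$ (uniqueness follows from the fact that $\sigma$ is an isomorphism and that $\pi_{i}\mapsto \sigma^{-1}(g_{i})=\pi_{i}$ determines a homomorphism uniquely). This verifies the universal property for $(g_{1},\ldots,g_{n})$, so they form a free generating set. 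I do not anticipate any genuine obstacle here: the entire content of the lemma is essentially packaged inside Corollary \ref{corollary:hopfians}, and the only small verification is the routine remark that automorphisms transport free generating sets to free generating sets.
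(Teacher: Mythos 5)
Your proof is correct, and there is no circularity: Corollary \ref{corollary:hopfians} is established in Section \ref{section:examples} without any appeal to Lemma \ref{lemma:main}, so you may invoke it here. The chain of reasoning --- extend $\pi_i\mapsto g_i$ to an endomorphism $\sigma$ of $\McNn$, note that $\sigma$ is surjective because the $g_i$ generate, conclude from hopficity that $\sigma$ is an automorphism, and check that automorphisms transport free generating sets to free generating sets via $\varphi\circ\sigma^{-1}$ --- is sound (your parenthetical justification of uniqueness is slightly garbled, but the intended argument, namely that $\psi(g_i)=b_i$ forces $\psi\circ\sigma=\varphi$ by freeness, is the right one). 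However, the paper takes a genuinely different and deliberately more elementary route: it shows that the range $R$ of the $\mathbb Z$-map $g=(g_1,\ldots,g_n)$ is a rational polyhedron with $\McN(R)\cong\McNn$, and then derives a contradiction from $R\subsetneqq\cube$ by comparing, for each large denominator $b$, the finitely many rational points of denominator $b$ in $R$ with those in $\cube$. The remark following the lemma explains why: that geometric argument adapts to the free $\ell$-group $\mathcal A_n$ in Corollary \ref{corollary:main-group} (with the non-self-embeddability of the sphere $S^{n-1}$ replacing the denominator count), whereas your hopficity-first strategy would be circular in that setting --- in the paper's architecture the hopficity of $\mathcal A_n$ (Corollary \ref{corollary:hopfian-freel}) is \emph{deduced from} the free-generating-set result, and the residual-finiteness machinery underlying Corollary \ref{corollary:hopfians} has no $\ell$-group analog, since nontrivial $\ell$-groups have no finite members. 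In short, your proof buys brevity for the MV-algebraic statement; the paper's buys a technique that transfers to $\ell$-groups.
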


\begin{proof} 
 $\McNn$ comes equipped 
with the free generating set $\{\pi_1,\ldots,\pi_n\}$, where
$\pi_i\colon \cube\to \interval$ is the $i$th coordinate function.
Let
$\{g_1,\ldots,g_n\}$ be a generating set of  $\McNn$. 
Let  the $\mathbb Z$-map $g\colon \interval^n\to\interval^n$
be defined by  
$g(x)=(g_1(x),\ldots,g_n(x))$ for all $x\in \interval^n$.
Let 
$\eta\colon \McNn\to \McNn$
be the endomorphism  of $\McNn$  canonically
extending the map $\pi_j\mapsto g_j,\,\,\,(j=1,\ldots,n).$
Any  $f\in \McNn$
 is mapped by $\eta$ into the
  composite McNaughton function  $f\circ g$. 
 Arguing by way of contradiction,
assume $\{g_1,\ldots,g_n\}$ does not freely generate
$\McNn$.  Then
$\eta$ is not an automorphism of  $\McNn$.
Since 
$\eta$ is onto $\McNn$  then  $\eta$ is not one-one.
%\begin{equation}
%\label{equation:not-one-one}
%\mbox{$\eta$\, is not one-one.}
%\end{equation}

\medskip
\noindent
{\it Claim.} Let $R\subseteq \interval^n$ denote the range of $g$.
Observe that $R$ is a rational polyhedron, \cite[Lemma 3.4]{mun11}.
Let $\McN(R)$ be the MV-algebra 
of restrictions to $R$  of the McNaughton functions of
$\McNn$.
Then there is an isomorphism of  $\McNn$ onto
$ \McN(R)$. 

 \smallskip
Indeed, the
 homomorphism  $\eta' \colon \McN(R)\to \McNn$  defined by
$h \mapsto h\circ g$ for all $h \in \McN(R)$
 is onto  $\McNn$ because the $g_i$ generate
$\McNn.$ Let  $0\not= l\in \McN(R)$, say  $l(y)\not=0$
for some $y\in R.$  There is  $x\in \cube$ with  $y=g(x)$.
Then  $\eta' (l)=l\circ g$ does not vanish at $x$, which shows
that $\eta' $ is one-one.  Our claim is proved.

\smallskip
 Now we note that $R$ is strictly contained in 
 $\interval^n$, for otherwise the
 map $\eta'$  coincides with  
 $\eta$, whence  $\eta$  is  one-one, which is impossible. 
Thus there exists  a point
$z\in \cube\setminus R$. Since $R$ is a closed subset
of $\cube$ we may assume $z\in \mathbb Q^n.$
Let $d=\den(z)$ be the least common denominator of the coordinates
of $z$. 
By Theorem \ref{theorem:semisimple}, the
 maximal ideal $\mathfrak m_z=\{f\in \McN(R)\mid f(z)=0\}$ 
%$x\in R\mapsto \mathfrak m_x
%\in \maxspec({\McN(R)})$ 
%%
%%
%%
%%
%%
has the property that  the quotient
$\McN(R)/\mathfrak m_z$ is the MV-chain with $d+1$ elements.
Conversely, if  $y\in R$ and  $\McN(R)/\mathfrak m_y$ is
the MV-chain with $d+1$ elements then $\den(y)=d$. 
(If necessary, see \cite[Proposition 4.4(iii)]{mun11} for details.)
For  $b=1,2,\dots,$ let  $N_b(\McN(R))$ be the number of maximal ideals  
$\mathfrak m\in \maxspec(\McN(R))$
 such that $\McN(R)/\mathfrak m$ has  
$b+1$ elements.  Then   $N_b(\McN(R))$  coincides   
 with the  (trivially finite)  number of rational points $s$
in $R$ such that $\den(s)=b.$  Let  $N_b(\McNn)$
be similarly defined.   From 
$R \subsetneqq \cube$ and $R$ being closed,  it follows that
$N_b(\McNn)$  is strictly larger
than $N_b(\McN(R))$ for all large $b$. This contradicts
 the existence of the isomorphism  $\eta'$  of
$\McN(R)$ onto $ \McNn$.
\end{proof}

\begin{remark}
A different proof of Lemma
\ref{lemma:main}
can be obtained from  \cite[Remark after Theorem 2]{jontar}
(also see
\cite[Corollary 8]{mal-book}),
 using the
well known fact 
that the variety of MV-algebras is generated by 
its finite members,
\cite[proof of Chang completeness theorem 2.5.3]{cigdotmun}.  
However,  
 the elementary proof given here is of  some interest,
  because, as will be shown in the next result,  it works equally well  for 
  $\ell$-groups,  to which
 \cite{jontar} and  \cite[Corollary 8]{mal-book} 
are not apparently applicable.
 \end{remark}

\begin{corollary}
\label{corollary:main-group}  Any  
generating set $\{g_1,\ldots,g_n\}$ of the free $n$-generator 
$\ell$-group   $\mathcal A_n$ is free generating.
\end{corollary}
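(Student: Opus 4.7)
My plan is to transcribe the argument of Lemma \ref{lemma:main} to the $\ell$-group setting, substituting the sphere $S^{n-1}$---the maximal spectral space of $\mathcal A_n$---for the cube $[0,1]^n$. The denominator-counting that produced a contradiction in Lemma \ref{lemma:main} will be replaced by the topological fact, already invoked in the proof of Theorem \ref{theorem:germ}(iii) via \cite[7.2(6), p.\,180]{ams}, that $S^{n-1}$ is not homeomorphic to any of its proper subsets.

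First I would introduce the endomorphism of $\mathcal A_n$ determined by the given generating set. Define $g=(g_1,\dots,g_n)\colon \mathbb R^n\to \mathbb R^n$; each $g_j$ is continuous, positively homogeneous, and piecewise homogeneous linear with integer coefficients, so $g$ is too. By the universal property of $\mathcal A_n$ as the free $n$-generator $\ell$-group (\cite[Theorem 6.3]{andfei}), there is a unique $\ell$-endomorphism $\eta\colon \mathcal A_n\to \mathcal A_n$ with $\eta(\pi_j)=g_j$, explicitly $\eta(f)=f\circ g$, and the hypothesis that the $g_j$ generate makes $\eta$ surjective.

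Next I would argue by contradiction. Assume $\eta$ is not injective, and set $R=g(\mathbb R^n)=\mathbb R_{\geq 0}\cdot g(S^{n-1})$, a closed cone in $\mathbb R^n$ (since $g(S^{n-1})$ is compact and $g$ is positively homogeneous). Writing $\mathcal A_n\restrict R$ for the $\ell$-group of restrictions $f\restrict R$, $f\in \mathcal A_n$, the map
$$
\eta'\colon \mathcal A_n\restrict R\longrightarrow \mathcal A_n,\qquad h\mapsto h\circ g,
$$
is surjective (the $g_j$ generate $\mathcal A_n$) and injective ($h\circ g\equiv 0$ forces $h$ to vanish on $g(\mathbb R^n)=R$), hence an $\ell$-isomorphism. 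If $R$ coincided with $\mathbb R^n$, then $\eta'=\eta$ would be injective, a contradiction; so $R\subsetneq \mathbb R^n$, and therefore $R\cap S^{n-1}\subsetneq S^{n-1}$.

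Finally I would exploit the spectral-topological invariant. The maximal spectral space of $\mathcal A_n$ is $S^{n-1}$, as recorded at the opening of this section. The parallel analysis identifies the maximal spectral space of $\mathcal A_n\restrict R$ with $R\cap S^{n-1}$: its maximal ideals have the form $\mathfrak m_{[x]}=\{h : h(x)=0\}$, well-defined on rays by positive homogeneity and parametrised by $R\cap S^{n-1}$. The $\ell$-isomorphism $\eta'$ then induces a homeomorphism $S^{n-1}\cong R\cap S^{n-1}$, contradicting the topological fact that $S^{n-1}$ is not homeomorphic to any of its proper subsets. This forces $\eta$ to be an automorphism, so that $\{g_1,\dots,g_n\}$ is free generating. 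The main obstacle I anticipate is bookkeeping: verifying carefully that $\mathcal A_n\restrict R$ really has maximal spectrum $R\cap S^{n-1}$ in the natural topology, together with the small check that $g(p)\ne 0$ for every $p\ne 0$---otherwise $\eta(\mathcal A_n)$ would lie in the proper ideal $\mathfrak m_{[p]}$ of $\mathcal A_n$, contradicting the surjectivity of $\eta$.
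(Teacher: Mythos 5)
Your proposal is correct and follows essentially the same route as the paper: both arguments build the isomorphism $h\mapsto h\circ g$ between $\mathcal A_n$ and $\mathcal A_n\restrict R$ for $R$ the range of $g=(g_1,\dots,g_n)$, deduce $R\subsetneq\mathbb R^n$ from the assumed failure of free generation, pass to maximal spectral spaces, and conclude from the fact that $S^{n-1}$ is not homeomorphic to any of its proper subsets. The only cosmetic difference is that the paper cites Baker--Beynon duality to identify the maximal spectrum of $\mathcal A_n\restrict R$ with a subset of $S^{n-1}$ missing a point, whereas you carry out that identification by hand via the ideals $\mathfrak m_{[x]}$ (and your closing checks --- that $g$ does not vanish on $S^{n-1}$ and that $R$ is a closed cone --- are exactly the right bookkeeping).
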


\begin{proof}  
%In the  proof of Lemma
%\ref{lemma:main}
%replace the free MV-algebra
%$\McNn$ by the free $\ell$-group   $\mathcal A_n$.
Let   $R'$ be the range  of the map
$g\colon x\in \Rn \mapsto  (g_1(x),\ldots,g_n(x))\in \Rn.$   
An adaptation of the proof of the claim
in  Lemma \ref{lemma:main},
  yields an isomorphism  $h\mapsto h\circ g$ 
between  $\mathcal A_n$  and
 $\mathcal A_n\restrict R'=\{f\restrict R'\mid
f\in \mathcal A_n\}$.
Assuming by way of contradiction
that  $\{g_1,\ldots,g_n\}$  is not free generating, we have a point
$z' \in S^{n-1}\setminus  R'$.
We now compare  the
maximal 
spectral space  $S^{n-1}$ 
of  $\mathcal A_n$  with the maximal spectral
space  $S'$ of  $\mathcal A_n\restrict R'$. The Baker-Beynon
duality  \cite{bak, bey-uno, bey-due}  shows 
that $S'$ is homeomorphic to a subset of
$S^{n-1}\setminus \{z'\}$.  
A contradiction is obtained since
 the $(n-1)$-sphere $S^{n-1}$   is not
homeomorphic to any of its 
proper subsets, \cite[p. 180]{ams}.
\end{proof}

\begin{corollary}
\label{corollary:hopfian-freel}  
(i)  For all $n=1,2,\ldots$, \,\, 
the free $n$-generator 
$\ell$-group  
$\mathcal A_n$  is hopfian.

\smallskip
(ii) Let the ideal $\mathfrak q$  of
$\mathcal A_2$ be given by all functions of
$\mathcal A_2$ identically vanishing over the first quadrant $Q$
of $\mathbb R^2$.
Then the quotient  $\ell$-group 
$\mathcal A_2/\mathfrak q$ is not hopfian.
\end{corollary}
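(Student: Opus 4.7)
The plan is to deduce (i) directly from Corollary~\ref{corollary:main-group} and the universal property of freeness, and to establish (ii) by recognizing $\mathcal A_2/\mathfrak q$ as the $\ell$-group $H$ that already appeared in the proof of Theorem~\ref{theorem:non-hopf}(ii) and recycling the endomorphism constructed there.

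For (i), let $\sigma\colon \mathcal A_n\to \mathcal A_n$ be a surjective $\ell$-group homomorphism and set $g_i=\sigma(\pi_i)$, where $\pi_1,\ldots,\pi_n$ are the canonical coordinate-function free generators of $\mathcal A_n$. Since the $\pi_i$ generate $\mathcal A_n$ and $\sigma$ is onto, $\{g_1,\ldots,g_n\}$ is also an $n$-element generating set of $\mathcal A_n$; by Corollary~\ref{corollary:main-group} it is in fact a \emph{free} generating set. The universal property then yields a homomorphism $\tau\colon \mathcal A_n\to \mathcal A_n$ with $\tau(g_i)=\pi_i$ for every $i$, and the composite $\tau\circ\sigma$ fixes each $\pi_i$, hence equals the identity on $\mathcal A_n$. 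Therefore $\sigma$ is injective, and (i) follows.

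For (ii), restriction to the first quadrant induces an $\ell$-group surjection $\mathcal A_2\to H:=\mathcal A_2\restrict Q$ whose kernel is precisely $\mathfrak q$, so $\mathcal A_2/\mathfrak q\cong H$. Exactly as in the proof of Theorem~\ref{theorem:non-hopf}(ii), define the integer-linear self-map
\[
q\colon (x,y)\in Q\longmapsto (x,\,x+y)\in Q
\]
and set $\sigma\colon h\in H\mapsto h\circ q\in H$. Composition with a homogeneous linear map having integer coefficients preserves the defining property of $H$, so $\sigma$ is a well-defined $\ell$-group endomorphism. To verify surjectivity, note that the two coordinate functions $x,y$ generate $H$ and both lie in the range: trivially $x=x\circ q$, while $(y-x)\vee 0\in H$ satisfies $((y-x)\vee 0)\circ q=((x+y)-x)\vee 0=y\vee 0=y$ on $Q$. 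To verify non-injectivity, the element $k=(x-y)\vee 0\in H$ is nonzero (it takes the value $1$ at $(1,0)$), yet $k\circ q(x,y)=(x-(x+y))\vee 0=(-y)\vee 0\equiv 0$ on $Q$.

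The only genuine obstacle in (i) is having a supply of free generating sets of the right cardinality on hand, which is precisely what Corollary~\ref{corollary:main-group} delivers; once this is granted, the argument reduces to a two-line manipulation with the universal property. In (ii) the only subtlety is that $q$ must actually send $Q$ into $Q$ (which it does, as both $x$ and $x+y$ are nonnegative on $Q$), so that the composite $h\circ q$ is a bona fide element of $H$ and $\sigma$ is legitimately an endomorphism of $\mathcal A_2/\mathfrak q$.
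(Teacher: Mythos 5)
Your proof is correct and rests on the same key ingredients as the paper's --- Corollary~\ref{corollary:main-group} for part (i), and the shear $q(x,y)=(x,x+y)$ on the first quadrant for part (ii) --- but you finish each part by a slightly different route. In (i), after invoking Corollary~\ref{corollary:main-group} the paper argues geometrically: freeness of $E=\{\omega(\pi_1),\ldots,\omega(\pi_n)\}$ forces the associated map $e\colon\mathbb R^n\to\mathbb R^n$ to satisfy $\range(e)=\mathbb R^n$, so no nonzero $f$ can have $f\circ e=0$. You instead use the universal property of the free generating set $\{g_1,\ldots,g_n\}$ to manufacture a left inverse $\tau$ of $\sigma$ and conclude injectivity from $\tau\circ\sigma=\mathrm{id}$; this is a legitimate alternative that sidesteps the (true but not entirely immediate) claim that the map associated with a free generating set has full range. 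In (ii), the paper first proves that $\mathfrak q$ is the principal ideal generated by $0\vee -x\vee -y$ before identifying $\mathcal A_2/\mathfrak q$ with $\mathcal A_2\restrict Q$, and then deduces non-hopficity from the non-hopficity of the unital $\ell$-group $(\mathbb Z +_{lex} H,1)$ established in Theorem~\ref{theorem:non-hopf}(ii); you obtain the identification directly from the first isomorphism theorem applied to the restriction homomorphism (whose kernel is $\mathfrak q$ by definition) and then verify surjectivity and non-injectivity of $h\mapsto h\circ q$ on $H=\mathcal A_2\restrict Q$ itself. This is self-contained and avoids the detour through the lexicographic unital $\ell$-group; the computations ($x=x\circ q$, $y=((y-x)\vee 0)\circ q$ on $Q$, and $((x-y)\vee 0)\circ q\equiv 0$ on $Q$ while $(x-y)\vee 0\neq 0$) are exactly those of the paper. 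Both routes are sound; yours is somewhat more economical.
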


\begin{proof}
(i)  
We let  $\pi_i\colon \Rn\to \Rn$ denote the
$i$the coordinate function. Then $\mathcal A_n$ is
freely generated by the set 
$\{\pi_1,\dots, \pi_n\}$.
%(There is no possibility
%of confusion with other uses of the symbols $\pi_i$
%in this paper.)  
Let $\omega\colon \mathcal A_n\to\mathcal A_n$ be a
surjective homomorphism, with the intent of proving that
$\omega$ is one-one.
The set $E=\{\omega(\pi_1),\dots,
\omega(\pi_n)\}=\{e_1,\ldots,e_n\}$ 
generates $\mathcal A_n$.
%, in symbols,  $\gen(E)=\mathcal A_n$.
By Corollary \ref{corollary:main-group},
$E$ is a free generating set of $\mathcal A_n.$
Let the map $e\colon \Rn \to \Rn$ be defined by
$$
e(x)=(e_1(x),\dots,e_n(x))\mbox{ for all } x=(x_1,\dots,x_n)\in \Rn.
$$
It follows that $\range(e)=\Rn$
and $\omega(g)=g\circ e$
for each  $g\in \mathcal{A}_n$.
If (absurdum hypothesis) 
 there is  a nonzero 
 $ f\in \mathcal A_n$ such that $0= \omega(f)=f\circ e$, let 
$x\in \Rn$ be  such that  $f(x)\not=0$.    
 Since
 $f$ 
identically  vanishes over $\range(e)$,
then  $x$ does not belong to
 $ \range(e)$,  a contradiction showing that $\omega$ is one-one
 and $\mathcal A_n$ is hopfian.

% because  .
%On the one hand, the maximal ideal space 
%$\maxspec(\mathcal A_n)$  
%is homeomorphic to the $(n-1)$-sphere $S^{n-1}$.
%On the other hand,
%the maximal ideal space of $\gen(E)$ is homeomorphic to
%$\range(e)$ which,  in turn,  is homeomorphic to
%$S^{n-1}\setminus X$ for some set
%$X\subseteq \Rn$ containing  $x$.
%This follows from the Baker-Beynon duality.
%Since  (as  already  noted)
%$S^{n-1}$ is not homeomorphic to  $S^{n-1}\setminus X $,
%the  $\ell$-group
% $\gen(E)$ is not isomorphic to $\mathcal A_n$,
% a contradiction. Thus $\omega$ is one-one, and 
% $\mathcal A_n$ is hopfian.
 
\medskip
 
 (ii)  
As in the proof of 
   Theorem \ref{theorem:non-hopf}(ii), let the
   $\ell$-group  $\mathcal A_2\restrict Q$ be defined by
 $\mathcal A_2\restrict Q =
  \{l\restrict Q\mid l \in \mathcal A_2 \}$.
   
\smallskip
  \noindent
  {\it Claim. } 
   Let the function  $q\in\mathcal A_2$ be defined by
  $q(x,y)=0\vee-x\vee -y$.  
Then
 $\mathfrak q$  is the ideal generated by $q$
 in $\mathcal A_2$.  In other words, for all
  $0\leq l\in \mathcal A_2$,  
\begin{equation}
\label{equation:iff}
 \mbox{$l\in\mathfrak q$  iff $mq\geq l$ for 
  some positive integer multiple  $mq$
  of $q$.}
  \end{equation} 
 
 As a matter of fact, 
 if  $mq\geq l$ then trivially $l$ vanishes identically over
 $Q$, because so does $q$. Conversely, if $l=0$ over $Q$
 then let  $a=\partial l(1,0)/\partial y^-$ and
 $b=\partial l(0,1)/\partial x^-$.  For all large integers
 $n$,   $a<\partial nq(1,0)/\partial y^-$ and
 $b<\partial nq(0,1)/\partial x^-$, whence for some
 suitably large  $m\in\mathbb Z$,   $mq$ will be $\geq l$ over
 $\mathbb R^2.$ We have thus proved \eqref{equation:iff}
 and settled our claim.

As a consequence, for any  two  
 functions $f,g\in \mathcal A_2$ we have $f/\mathfrak q=g/\mathfrak q$   
 iff    $|f-g|$ vanishes identically over $Q$ iff
 $f\restrict Q=g\restrict Q$. 
In conclusion, we have an isomorphism
 $
  \mathcal A_2\restrict Q\cong \mathcal A_2/\mathfrak q.
 $ 
In Theorem \ref{theorem:non-hopf}(ii)
it is proved  that $ \mathcal A_2\restrict Q$ is a non-hopfian
{\it unital}  $\ell$-group. A fortiori, 
 $ \mathcal A_2/\mathfrak q$ is a non-hopfian
 $\ell$-group. 
 \end{proof}

%%%%%%%%%%%%%%%%%%%%%%%%%%%%%%%%
%%%%%%%%%%%%%%%%%%%%%%%%%%%%%%%%
\section{Hopficity of the Farey-Stern-Brocot AF C$^*$-algebra, 
\cite{mun-adv}, \cite{boc}, \cite{mun-mjm}}
\label{section:applications2}

Using the results of the earlier sections, in
 this section we will prove that the
 (Farey-Stern-Brocot)  AF C$^*$-algebra  $\mathfrak M_1$
 has the hopfian property and is residually finite dimensional. 
Further, the hopfian property of   $\mathfrak M_1$ extends to all
 its primitive quotients. Owing to its remarkable 
 properties (see Remark \ref{remark:properties}),
\,\,$\mathfrak M_1$\,  has drawn increasing attention
in recent years, \cite{boc, eck, mun-lincei, mun-mjm, nik},
 after a latent
period of over twenty years since its introduction in
\cite{mun-adv}.

We refer to  \cite{dix} for background on C$^*$-algebras,
and to \cite{eff} for 
AF C$^*$-algebras, Elliott classification and $K_0$.
Our pace will be faster than in the previous sections. 

A  {\it unital  AF C$^*$-algebra}  $\mathfrak U$ is the norm closure
of an ascending sequence of finite dimensional C$^*$-algebras,
all with the same unit.
Elliott classification and further $K_0$-theoretic developments  
 \cite{eff} yield a functor
 $$
 \mathfrak U\mapsto K_{0}({\mathfrak U})
 %\quad{\mbox{$\mathfrak U$ an arbitrary   AF C$^*$-algebra}}  
 $$
 from  unital AF $C^{*}$-algebras to
  countable unital dimension groups.
 $K_0$  is an
 order-theoretic refinement of  Grothendieck's
functor.
The Murray-von Neumann order of projections
in $\mathfrak U$ is a lattice iff $K_{0}({\mathfrak U})$
is a unital  $\ell$-group.  $K_0$  sends any
(always closed and two sided) ideal $\mathfrak I$ of an
AF $C^{*}$-algebra $\mathfrak U$ to an ideal
$K_0(\mathfrak I)$ of $K_0(\mathfrak U)$.

As the most elementary  specimen of the free unital $\ell$-groups
  introduced at the end of
Section \ref{section:examples},
 $(M_1,1)$
 denotes   
the unital $\ell$-group of all
continuous piecewise (affine)  linear functions
$f\colon [0,1]\to \mathbb R$, where each
piece of $f$  is a linear polynomial with integer coefficients.
The constant function $1$ on $\interval$  is the distinguished 
 unit of
$(M_1,1)$. 
 
The so called  Farey-Stern-Brocot unital
 AF $C^{*}$-algebra  $\mathfrak M_{1}$
    was originally defined
in \cite[\S 3]{mun-adv} by
\begin{equation}
\label{equation:ko} 
K_0({\mathfrak M_1})=(M_1,1).
\end{equation}
$\mathfrak M_{1}$ was subsequently  rediscovered in 
 \cite{boc}, and denoted $\mathfrak A$. 
Using
 \cite[Theorem 3.3]{mun-adv}, in \cite[Theorem 1.1]{mun-mjm}  
the  Bratteli diagram of $\mathfrak M_{1}$ 
(which is immediately obtainable from 
the matricial presentation \cite[p. 35]{mun-adv}), 
was shown to coincide with the diagram introduced in  
  \cite{boc}, namely:

	\begin{center}
	\unitlength=2.5mm
	\begin{picture}(46,25)(0,0)

		\put(17.6,21){1}
			\put(21.6,21){1}
		\put(18,20){\circle*{0.4}}
		\put(22,20){\circle*{0.4}}
		%%%%%%%%
		
			\put(16,15){\circle*{0.4}}
\put(16,15){\line(2, 5){2}}

			\put(20,15){\circle*{0.4}}
			\put(20,15){\line(-2, 5){2}}
				\put(20,15){\line(2, 5){2}}
				
			\put(24,15){\circle*{0.4}}
				\put(24,15){\line(-2, 5){2}}
					%%%%%%%%
			
			\put(13,10){\circle*{0.4}}
			\put(13,10){\line(3, 5){3}}
			
			\put(17,10){\circle*{0.4}}
\put(17,10){\line(-1, 5){1}}
\put(17,10){\line(3, 5){3}}

					\put(20,10){\circle*{0.4}}
						\put(20,10){\line(0, 5){5}}
						
						\put(23,10){\circle*{0.4}}
\put(23,10){\line(1, 5){1}}
\put(23,10){\line(-3, 5){3}}
						
							\put(27,10){\circle*{0.4}}
							\put(27,10){\line(-3, 5){3}}
								%%%%%%%%
							
\put(10,5){\circle*{0.4}}
	\put(10,5){\line(3, 5){3}}
	\put(10,5){\line(-1, -2){1}}
\put(10,5){\line(1, -2){1}}
	
\put(13,5){\circle*{0.4}}
\put(13,5){\line(0, 5){5}}
\put(13,5){\line(4, 5){4}}
	\put(13,5){\line(-1, -1){2}}
\put(13,5){\line(0, -2){2}}
\put(13,5){\line(1, -1){2}}

	\put(16,5){\circle*{0.4}}
	\put(16,5){\line(1, 5){1}}
	\put(16,5){\line(-1, -2){1.0}}
\put(16,5){\line(0, -2){2}}
\put(16,5){\line(1, -2){1}}
	
	\put(18,5){\circle*{0.4}}
		\put(18,5){\line(-1, 5){1}}
			\put(18,5){\line(2, 5){2}}
	\put(18,5){\line(-1, -2){1}}
\put(18,5){\line(0, -2){2}}
\put(18,5){\line(1, -2){1}}
	
\put(20,5){\circle*{0.4}}
\put(20,5){\line(0, 5){5}}
	\put(20,5){\line(-1, -2){1}}
\put(20,5){\line(0, -2){2}}
\put(20,5){\line(1, -2){1}}

\put(22,5){\circle*{0.4}}
		\put(22,5){\line(-2, 5){2}}
			\put(22,5){\line(1, 5){1}}
	\put(22,5){\line(-1, -2){1}}
\put(22,5){\line(0, -2){2}}
\put(22,5){\line(1, -2){1}}

	\put(24,5){\circle*{0.4}}
		\put(24,5){\line(-1, 5){1}}
	\put(24,5){\line(-1, -2){1}}
\put(24,5){\line(0, -2){2}}
\put(24,5){\line(1, -2){1}}
	
	\put(27,5){\circle*{0.4}}
	\put(27,5){\line(0, 5){5}}
\put(27,5){\line(-4, 5){4}}
	\put(27,5){\line(-1, -1){2}}
\put(27,5){\line(0, -2){2}}
\put(27,5){\line(1, -1){2}}
	
\put(30,5){\circle*{0.4}}
\put(30,5){\line(-3, 5){3}}
\put(30,5){\line(-1,-2){1}}
\put(30,5){\line(1,-2){1}}
\put(8,2){\ldots\,\,\ldots\,\,
\ldots\,\, \ldots\,\, \ldots\,\,
\ldots\,\, \ldots\,\, \ldots\,\, \ldots}

\end{picture}

	\end{center}
	
\noindent {\small The two top  (depth 0) vertices 
%$a$ and $b$
are labeled 1.
%,  $\Lambda(a)=\Lambda(b)=1$.  
The label  
%$\Lambda(v)$ 
of any vertex
$v$ at depth $d=1,2,\ldots, $  is the sum of the labels
of the vertices at depth  $d-1$ connected to $v$ by an edge.
}%%footnotesize

\medskip

From  AF C$^*$-algebraic $K_0$-theory we have:

\begin{corollary} 
\label{corollary:finale}
The map 
$
\mathfrak I\mapsto K_0(\mathfrak I) 
$
is an order isomorphism
  (with respect to  inclusion) 
 of  the set of  ideals    of
$\mathfrak M_1$  onto
the set of  ideals   of $(M_1,1)$. 
%
%$K_0$ thus determines 
% a homeomorphism between the set of primitive ideals of
%$\mathfrak M_1$  and  the set of
%prime  ideals of  $(M_1,1)$  (both sets equipped
%with their respective hull kernel topologies).
%
%
Further, in the category of  unital dimension groups we 
have the isomorphism
 \begin{equation}
\label{equation:sblocco}
K_0\left(\frac{\mathfrak M_1}{\mathfrak I}
\right)\cong\frac{K_0({\mathfrak M_1})}{K_0(\mathfrak I)},
\end{equation}
which is automatically an isomorphism 
in the category of  unital $\ell$-groups.

\end{corollary}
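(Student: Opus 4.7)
The plan is to invoke two standard theorems from AF C$^*$-algebraic $K_0$-theory, both available in \cite{eff}, and then verify that the resulting isomorphism of unital dimension groups automatically respects the $\ell$-group structure.

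First, for the ideal correspondence, I would quote the Bratteli--Elliott--Effros theorem asserting that for any unital AF C$^*$-algebra $\mathfrak U$, the assignment $\mathfrak I \mapsto K_0(\mathfrak I)$ is an inclusion-preserving bijection from the lattice of closed two-sided ideals of $\mathfrak U$ onto the lattice of order ideals of the dimension group $K_0(\mathfrak U)$, with inverse obtained by taking the norm closure of the union of the matrix algebras whose projections lie in the given order ideal. Specialising to $\mathfrak U = \mathfrak M_1$ and using \eqref{equation:ko}, the order ideals of $(M_1,1)$ are precisely the $\ell$-ideals of $(M_1,1)$ introduced in Section \ref{section:examples}, which gives the first claim.

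For the isomorphism \eqref{equation:sblocco}, I would use the fact that $K_0$ is exact on AF C$^*$-algebras: every short exact sequence
\begin{equation*}
0 \longrightarrow \mathfrak I \longrightarrow \mathfrak M_1 \longrightarrow \mathfrak M_1/\mathfrak I \longrightarrow 0
\end{equation*}
of unital AF C$^*$-algebras induces a short exact sequence
\begin{equation*}
0 \longrightarrow K_0(\mathfrak I) \longrightarrow K_0(\mathfrak M_1) \longrightarrow K_0(\mathfrak M_1/\mathfrak I) \longrightarrow 0
\end{equation*}
of unital dimension groups, which yields the stated isomorphism. (Alternatively, $\mathfrak M_1/\mathfrak I$ is itself AF with a natural Bratteli diagram obtained by erasing from the diagram of $\mathfrak M_1$ the vertices whose projections are absorbed by $\mathfrak I$, and the resulting $K_0$ is visibly $K_0(\mathfrak M_1)/K_0(\mathfrak I)$.)

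The only remaining point is the final clause: that this isomorphism of unital dimension groups is automatically an isomorphism of unital $\ell$-groups. The key observation is that $K_0(\mathfrak M_1)=(M_1,1)$ is a unital $\ell$-group and $K_0(\mathfrak I)$ is an $\ell$-ideal, so the quotient $(M_1,1)/K_0(\mathfrak I)$ carries a canonical unital $\ell$-group structure whose underlying partial order is exactly the dimension-group order. By the Murray--von Neumann characterisation, the $\ell$-group property of $K_0$ of an AF C$^*$-algebra is equivalent to the lattice property of its projection ordering, and this property is inherited by quotients; in particular $K_0(\mathfrak M_1/\mathfrak I)$ is itself a unital $\ell$-group. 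Since the lattice operations on a dimension group with a lattice order are uniquely determined by the order, any dimension-group isomorphism between two such groups is automatically an $\ell$-group isomorphism, completing the proof. The main (mild) obstacle is this last compatibility step, since one must confirm that the $\ell$-ideal quotient in the category of unital $\ell$-groups coincides with the dimension-group quotient — a routine check that ultimately rests on uniqueness of lattice structure refining a given order.
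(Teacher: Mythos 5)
Your proposal is correct and follows essentially the same route as the paper, which likewise derives the ideal correspondence from the standard Elliott/Effros--Handelman $K_0$-theory of AF algebras and obtains \eqref{equation:sblocco} from the preservation of short exact sequences by $K_0$. Your extra remarks on why the dimension-group isomorphism is automatically an $\ell$-group isomorphism (the lattice operations being determined by the order) simply make explicit a point the paper leaves implicit.
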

\begin{proof}
The first statement  follows from  Elliott classification
of AF C$^*$-algebras and subsequent   $K_0$-theoretic developments,
 \cite[1.2-1-4]{han}.  The isomorphism
\eqref{equation:sblocco} follows because $K_0$ preserves
short exact sequences, \cite[p. 34]{ell}, \cite[\S 9]{eff}. 
\end{proof}

\begin{remark}
\label{remark:properties}
Other interesting properties of $\mathfrak M_1$ include:
All primitive ideals of $\mathfrak M_1$ are essential,
\cite[Theorem 4.2]{mun-mjm}
(also see \cite[Theorem 8.4]{mun-jfa} for a precursor of this result).
$\mathfrak M_1$ 
has a   faithful invariant tracial state, 
\cite[Theorem 3.1]{mun-lincei}.
Up to isomorphism, the Effros Shen C$^*$-algebras
$\mathfrak F_\theta$ are precisely the infinite dimensional simple
quotients of $\mathfrak M_1$, \cite[Theorem 3.1(i)]{mun-adv}.
The center of 
$\mathfrak M_1$  is
the C$^*$ algebra $\mathsf{C}[0, 1]$ of continuous complex
valued functions on $\interval$,  \cite[p. 976]{boc}. 
The state space of $\mathsf{C}[0, 1]$  is
affinely  weak$^*$ homeomorphic to  
 the space of tracial states on 
$\mathfrak M_1$, \cite[Theorem 4.5]{mun-mjm}.
Every state on the center $\mathsf{C}[0, 1]$ 
of $\mathfrak M_1$  
has a unique
tracial extension to $\mathfrak M_1$,
 \cite[Theorem 2.5]{eck}.
 In \cite[Theorem 21]{cabmun-etds}
 one finds a detailed analysis of the isomorphism classes of
 germinal quotients of $\mathfrak M_1$.   
The automorphism group
of $\mathcal M_1$  has exactly two connected components,
\cite[Theorem 4.3]{mun-mjm}.
In \cite{eck}  the
Gauss map, which is a
Bernoulli shift for continued fractions,
is generalized 
in  the noncommutative setting provided by $\mathfrak M_1$.
%O. Bratteli,
%The center of approximately finite-dimensional C-algebras. J. 
%Funct. Anal. 21.2 (1976) 195Ð202.
\end{remark}

% \item[(iii)] 
%$ (M_1,1)$ is the unital $\ell$-group of all
%continuous piecewise linear functions
%$f\colon [0,1]\to \mathbb R$, where each
%piece of $f$  is a linear polynomial with integer coefficients.
%The constant function 1 is the distinguished order unit of
%(M_1,1)$. 
 
% \smallskip
% 
%(ii) Applying Theorem \ref{theorem:semisimple} to the
%MV-algebra $\McN(\interval)=\Gamma(M_1,1)$ we
%obtain homeomorphisms
%$$
%\interval \cong \maxspec(\McN(\interval))\cong\maxspec(M_1,1)\cong
%\mbox{the   maximal ideal 
%space of  $\mathfrak M_1$}.
%$$
%% then get
%%a homeomorphism  
%%$y \mapsto \mathfrak N_y$
%%between 
%%the unit real interval $[0,1]$ and the   maximal ideal 
%%space of  $\mathfrak M_1$ equipped with the hull-kernel topology.

By  Theorem \ref{theorem:gamma},
the unital $\ell$-group
 $(M_1,1)$  inherits via $\Gamma$ the semisimplicity of 
 the free MV-algebra $\McN(\interval)$.
Finite rank  maximal ideals of    $(M_1,1)$ 
correspond to 
 rational points in $\interval$ via  the
 homeomorphism of Theorem \ref{theorem:semisimple}.
 By Theorem \ref{theorem:residually-finite},
 semisimplicity and a dense set of finite rank
 maximal ideals are the counterpart for 
 $(M_1,1)$ of the
 residual finiteness of    $\McN(\interval).$

For the AF C$^*$-algebraic
  counterpart  of 
the residual finiteness of  the MV-algebra
 $\McN(\interval)=\Gamma(M_1,1)=\Gamma(K_0(\mathfrak M_1))$,
 recall  that a C$^*$-algebra $A$ is
 {\it residually finite-dimensional} 
 if it has a separating family of finite dimensional representations.
 % \cite{goomen}.
In other words, 
 for any nonzero element $a\in A$  there is a
finite dimensional representation  $\pi$  of $A$ 
with $\pi(a)\not=0.$

\begin{theorem}
\label{theorem:res-fin-dim}
$\mathfrak M_1$
 is residually finite-dimensional
 \end{theorem}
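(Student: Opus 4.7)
The plan is to reduce the claim to the residual finiteness of the free one-generator MV-algebra $\McN(\interval)$. Observe first that $\McN(\interval)$ is a finitely generated (by the identity function) semisimple MV-algebra whose finite rank maximal ideals correspond, via Theorem \ref{theorem:semisimple}, to the rational points of $[0,1]$, a dense subset of $\maxspec(\McN(\interval))\cong [0,1]$; hence Theorem \ref{theorem:residually-finite} gives that $\McN(\interval)$ is residually finite. Concretely, the intersection of all ideals $\mathfrak{k}$ of $\McN(\interval)$ such that $\McN(\interval)/\mathfrak{k}$ is finite is the zero ideal, because any nonzero McNaughton function fails to vanish at some rational $r\in[0,1]$, and the evaluation $f\mapsto f(r)$ factors through a finite MV-chain.

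Given a nonzero $a\in\mathfrak M_1$, the goal is to produce a (closed two-sided) ideal $\mathfrak I$ of $\mathfrak M_1$ with $a\notin\mathfrak I$ and $\mathfrak M_1/\mathfrak I$ finite-dimensional; composing the canonical surjection with a faithful matrix representation of that finite-dimensional quotient then yields a finite-dimensional representation of $\mathfrak M_1$ separating $a$ from $0$. Equivalently, it suffices to show that the intersection of all such ideals $\mathfrak I$ is $\{0\}$. The crucial dictionary is this: by \eqref{equation:sblocco}, $\mathfrak M_1/\mathfrak I$ is finite-dimensional iff the unital dimension group $(M_1,1)/K_0(\mathfrak I)$ is a finite direct sum of copies of $\mathbb Z$ with positive integer unit components, iff (applying $\Gamma$) the corresponding quotient of $\Gamma(M_1,1)=\McN(\interval)$ is a finite product of finite MV-chains, i.e., a finite MV-algebra.

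Now Corollary \ref{corollary:finale} furnishes an order isomorphism between the ideals of $\mathfrak M_1$ and those of $(M_1,1)$, while $\Gamma$ is a categorical equivalence carrying ideals of $(M_1,1)$ to ideals of $\McN(\interval)$; both preserve arbitrary intersections and send the zero ideal to the zero ideal. Translating the first paragraph back along $\Gamma\circ K_0$, we conclude that the intersection of all $\mathfrak I$ with $\mathfrak M_1/\mathfrak I$ finite-dimensional is indeed $\{0\}$, and the proof is complete. The principal delicate point is verifying the ``finite-dimensional iff finite after $\Gamma\circ K_0$'' dictionary: one identifies $K_0\bigl(\bigoplus_{i=1}^{k} M_{n_i}(\mathbb C)\bigr)$ with $(\mathbb Z^{k},(n_1,\ldots,n_k))$, whose $\Gamma$-image is the finite MV-algebra $\prod_{i=1}^{k}\{0,1/n_i,\ldots,1\}$, and invokes Elliott classification to guarantee that a unital AF algebra has $K_0$ of this form exactly when it is finite-dimensional.
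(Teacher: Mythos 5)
Your proof is correct, but it takes a genuinely different route from the paper's. You transfer the residual finiteness of $\McN(\interval)$ (Theorem \ref{theorem:residually-finite}) wholesale through the two ideal-lattice isomorphisms --- $\Gamma$ on one side and the $K_0$ correspondence of Corollary \ref{corollary:finale} on the other --- using the fact that order isomorphisms of complete ideal lattices preserve arbitrary infima (= intersections), and you identify the finite-dimensional quotients of $\mathfrak M_1$ abstractly via Elliott classification: a unital AF algebra is finite-dimensional exactly when its $K_0$ is $(\mathbb Z^k,(n_1,\ldots,n_k))$ with the simplicial order, whose $\Gamma$-image is a finite product of finite MV-chains. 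The paper instead works pointwise on the maximal spectral space: it uses the homeomorphism $\interval\cong\Maxspec(\mathfrak M_1)$ of \eqref{equation:homeo}, the concrete computation $\mathfrak M_1/\mathfrak N_{p/q}\cong \mathsf M_q$ from \cite{boc} and \cite{mun-mjm}, the semisimplicity of $(M_1,1)$ to get $\bigcap\Maxspec(\mathfrak M_1)=\{0\}$, and an openness-plus-density argument to find a rational $s/t$ with $a\notin\mathfrak N_{s/t}$. Both arguments ultimately rest on the same fact --- rational points of $\interval$ are dense and yield finite quotients --- but yours hides it inside Theorem \ref{theorem:residually-finite} and gains a clean functorial statement (residual finiteness of $\Gamma(K_0(\mathfrak M_1))$ forces residual finite-dimensionality of $\mathfrak M_1$), at the price of invoking Elliott classification for the finite-dimensionality dictionary; the paper's version is more explicit about which finite-dimensional representations do the separating, namely the matrix-algebra quotients at Farey fractions. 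The only step you should make explicit is why the infimum in the lattice of closed two-sided ideals of $\mathfrak M_1$ is the set-theoretic intersection (it is, since an intersection of closed two-sided ideals is again one), which is what licenses carrying the intersection across the order isomorphisms.
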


\begin{proof}
Following \cite{mun-jfa}, 
let   $\Maxspec(\mathfrak M_1)$ denote the set of
maximal ideals of $\mathfrak M_1$ with the hull kernel  
topology inherited from ${\rm prim}(\mathfrak M_1)$
by restriction.  ($\Maxspec$ is denoted ${\rm MaxPrim}$ in \cite{mun-mjm}.)  
Since $(M_1,1)$ is semisimple, 
the order isomorphism
\begin{equation}
\label{equation:order}
 \mathfrak N\in \Maxspec(\mathfrak M_1)
\mapsto K_0(\mathfrak N)\in \maxspec(M_1,1)
\end{equation}
 of  Corollary \ref{corollary:finale} 
entails
\begin{equation}
\label{equation:semisimple*}
\bigcap \Maxspec(\mathfrak M_1)=\{0\}.
\end{equation} 
By definition of the respective
 topologies of the maximal spectral spaces of
$\mathfrak M_1$ and of $(M_1,1)$, the order isomorphism
\eqref{equation:order}
is also  a homeomorphism
  $$
  K_0\colon  \Maxspec(\mathfrak M_1)\cong
  \maxspec(M_1,1). 
  $$
  By Theorem \ref{theorem:gamma}, the map
  $$
  \mathfrak n\in \maxspec(M_1,1)\mapsto \mathfrak n\cap 
  \interval
  \in \maxspec(\McN(\interval))
  $$
  is a homeomorphism of $\maxspec(M_1,1)$ onto 
  $ \maxspec(\McN(\interval))$.
By Theorem \ref{theorem:semisimple}(vi),  the map
$$
\xi\in \interval \mapsto \mathfrak m_\xi\in \maxspec(\McN(\interval))
$$ 
is a homeomorphism of the unit real interval
$\interval$ onto the maximal spectral space
$\maxspec(\McN(\interval))$,
(see \cite[Lemma 11]{boc} or \cite[(6) and Corollary 3.4]{mun-mjm} for details).
From  these homeomorphisms we obtain
a homeomorphism
\begin{equation}
\label{equation:homeo}
\rho\in \interval \mapsto \mathfrak N_\rho\in 
\Maxspec(\mathfrak M_1)
\end{equation}
having the following property:
For any rational $p/q\in \interval$, the quotient
$
\mathfrak M_1/\mathfrak N_{p/q} 
$  
is  the C$^*$-algebra  $\mathsf M_q$ of $q\times q$ complex matrices.  
See \cite[Proposition 4]{boc},
(also see \cite[Corollary 3.3(ii)]{mun-mjm}).
By \cite[Theorem 3.1(i)]{mun-adv},
 for every irrational $\theta\in \interval$   the quotient C$^*$-algebra
$
\mathfrak M_1/\mathfrak N_{\theta} 
$
coincides
 with the Effros-Shen algebra  $\mathfrak F_\theta$, \cite{eff}.  

Let $0\not=a\in \mathfrak M_1$. 
By  \eqref{equation:semisimple*} there is $\mathfrak N\in 
\Maxspec(\mathfrak M_1)$ such that $a/\mathfrak N\not=0,$
i.e., $a\notin \mathfrak N.$
By definition of hull-kernel topology,
 the set of maximal ideals $\mathfrak N$ of $\mathfrak M_1$ with
$a/\mathfrak N\not=0$ is open in
$\Maxspec(\mathfrak M_1)$. Thus by  
 \eqref{equation:homeo} 
 we have  a rational $s/t\in \interval$
such that  $a/\mathfrak N_{s/t}\not=0.$
The quotient map  $\pi\colon \mathfrak M_1
\to \mathfrak M_1/\mathfrak N_{s/t}\cong \mathsf M_t$ 
yields  the desired finite dimensional
representation with $\pi(a)\not=0.$   
\end{proof}

We next show that
the unital AF C$^*$-algebra
$\mathfrak M_1$ inherits the hopfian
property from its associated unital
$\ell$-group $(M_1,1)$:

\begin{theorem} 
\label{theorem:af-hopf}
 Suppose we are given  a short
 exact sequence
 $$
 0 \xrightarrow{} \mathfrak I \xrightarrow{\iota} \mathfrak M_1
\xrightarrow{\sigma}\mathfrak M_1/ \mathfrak I \xrightarrow{} 0
 $$
 where $\mathfrak I$ is 
 an  ideal of  $\mathfrak M_1$,
 \,\,\,$\iota$ is the identity map on $\mathfrak I\subseteq
 \mathfrak M_1$, and
 $\sigma\colon \mathfrak M_1
 \to \mathfrak M_1/\mathfrak I$ is the quotient map, with
 $K_0(\sigma)$
a   homomorphism   of $K_0(\mathfrak M_1)$
into $K_0(\mathfrak M_1/\mathfrak I)$ in the category
of unital 
 $\ell$-groups. 
 Suppose we have an isomorphism
  $\theta\colon\mathfrak M_1/\mathfrak I\cong 
 \mathfrak M_1$. Then
 $\theta\circ\sigma$ is an automorphism of
 $\mathfrak M_1$.

%($K_0(\sigma)\colon K_0(\mathfrak M_1)\mapsto 
%K_0(\mathfrak M_1/\mathfrak I)$ is a surjection,
%in the category of unital dimension groups and)

 \end{theorem}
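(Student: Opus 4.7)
The plan is to lift the hopficity of $(M_1,1)$ up to $\mathfrak M_1$ via the $K_0$ functor and Elliott's classification. Concretely, I would apply $K_0$ to the composite $\theta\circ\sigma \colon \mathfrak M_1 \to \mathfrak M_1$ and show first that $K_0(\theta\circ\sigma)$ is an automorphism of $(M_1,1) = K_0(\mathfrak M_1)$, then invoke Elliott to conclude the same for $\theta\circ\sigma$ itself.

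First I would assemble the following two ingredients. (1) The unital $\ell$-group $(M_1,1)$ is hopfian: by \eqref{equation:freeg} one has $\Gamma(M_1,1)=\McN(\interval)$, which is the free $1$-generator MV-algebra, so by Corollary \ref{corollary:hopfians} it is hopfian, and then by the categorical equivalence $\Gamma$ (Theorem \ref{theorem:gamma}) so is $(M_1,1)$; equivalently, this is Corollary \ref{corollary:free-group} for $n=1$. (2) The homomorphism $K_0(\sigma)$ is surjective onto $K_0(\mathfrak M_1/\mathfrak I)$, because $K_0$ preserves short exact sequences in the AF setting (cf.\ Corollary \ref{corollary:finale} and the proof of \eqref{equation:sblocco}). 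Combined with the hypothesis that $K_0(\theta)$ is an isomorphism of unital $\ell$-groups, this yields that
\[
K_0(\theta\circ\sigma)=K_0(\theta)\circ K_0(\sigma)\colon (M_1,1)\longrightarrow (M_1,1)
\]
is a surjective homomorphism in the category of unital $\ell$-groups.

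Now I would invoke hopficity of $(M_1,1)$: any surjective unital $\ell$-group endomorphism of $(M_1,1)$ is an automorphism, so $K_0(\theta\circ\sigma)$ is an isomorphism of unital dimension groups. At this point Elliott's classification of unital AF C$^*$-algebras takes over: a unital $*$-homomorphism between unital AF C$^*$-algebras is an isomorphism precisely when the induced map on $K_0$ is an isomorphism of unital dimension groups (see \cite{eff}). Applying this to the unital $*$-homomorphism $\theta\circ\sigma \colon \mathfrak M_1\to \mathfrak M_1$ yields that it is an automorphism of $\mathfrak M_1$.

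The only delicate point, and the one I would be most careful about, is ensuring that $K_0(\sigma)$ really is surjective as a map of \emph{unital} $\ell$-groups (so that the composition with $K_0(\theta)$ is a legitimate endomorphism of $(M_1,1)$ in the category where hopficity was proved). This is exactly what the hypothesis on $K_0(\sigma)$ in the statement, together with the quotient identification \eqref{equation:sblocco}, guarantees: the unit of $K_0(\mathfrak M_1/\mathfrak I)$ is the image of the unit of $K_0(\mathfrak M_1)$, so $K_0(\sigma)$ is a unit-preserving surjection and the composition $K_0(\theta)\circ K_0(\sigma)$ is indeed an endomorphism of $(M_1,1)$ to which Corollary \ref{corollary:free-group} applies.
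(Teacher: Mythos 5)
Your strategy coincides with the paper's: apply $K_0$, use exactness of $K_0$ on AF algebras to get surjectivity of $K_0(\sigma)$, compose with $K_0(\theta)$ to obtain a surjective unital $\ell$-group endomorphism of $(M_1,1)$, and invoke hopficity of $(M_1,1)$ (Corollary \ref{corollary:free-group}) to conclude that $K_0(\theta\circ\sigma)$ is an automorphism. Up to that point your argument matches the paper's step for step. The one place where your justification does not hold up as stated is the final descent from $K_0$ back to $\mathfrak M_1$. The principle you invoke --- that a unital $*$-homomorphism between unital AF C$^*$-algebras is an isomorphism \emph{precisely when} the induced map on $K_0$ is an isomorphism of unital dimension groups --- is false in the ``if'' direction. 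Elliott's theorem produces \emph{some} isomorphism of algebras lifting a given isomorphism of unital dimension groups; it does not assert that every $*$-homomorphism inducing such an isomorphism is itself one. A concrete counterexample is the unital embedding $a\mapsto a\otimes 1$ of the CAR algebra $M_{2^\infty}$ into $M_{2^\infty}\otimes M_{2^\infty}\cong M_{2^\infty}$: it induces the identity on $K_0=\mathbb Z[1/2]$ (check traces of projections) yet is very far from surjective. So surjectivity of $\theta\circ\sigma$ cannot be read off from $K_0$ data.

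The gap is easily repaired, and the repair is exactly what the paper does. Surjectivity of $\theta\circ\sigma$ is automatic here, since $\sigma$ is a quotient map and $\theta$ an isomorphism; the only thing that must be extracted from $K_0(\theta\circ\sigma)$ being an automorphism is \emph{injectivity}, i.e.\ $\mathfrak I=\{0\}$. That direction is sound: $K_0(\mathfrak I)\subseteq\ker K_0(\sigma)=\{0\}$, and the ideal correspondence of Corollary \ref{corollary:finale} is a bijection between ideals of $\mathfrak M_1$ and ideals of $(M_1,1)$, so $K_0(\mathfrak I)=\{0\}$ forces $\mathfrak I=\{0\}$, whence $\sigma$ and $\theta\circ\sigma$ are injective. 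With the last step rephrased in this way (injectivity from the ideal correspondence, surjectivity for free), your proof is complete and is the paper's proof.
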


\begin{proof} 
It is well known that
  $\mathfrak I$ is an AF C$^*$-algebra
and $\mathfrak M_1/\mathfrak I$ is a unital 
AF C$^*$-algebra, \cite{eff}, \cite[p. 34]{ell}.
 By a special case of the Bott Periodicity Theorem,
 \cite[Corollary 9.2]{eff}, 
the natural map 
\begin{equation}
\label{equation:natural}
K_0(\sigma)\colon K_0(\mathfrak M_1)\to
K_0(\mathfrak M_1/\mathfrak I)
\end{equation}
 is a {\it  surjective} homomorphism   
in the category of unital $\ell$-groups.  By 
 Corollary \ref{corollary:finale},
letting the ideal $\mathfrak i$ of
 $(M_1,1)$ be defined by 
 $\mathfrak i=K_0(\mathfrak I),$
 we may rewrite \eqref{equation:natural}
 as follows:   
$$
K_0(\sigma)\colon
(M_1,1)  \to \frac{(M_1,1)}{\mathfrak i}.
$$
 From the assumed isomorphism
 $$
 \theta\colon\mathfrak M_1/\mathfrak I\cong 
 \mathfrak M_1 
 $$
 
 \smallskip
 \noindent
the functorial properties of $K_0$ yield an isomorphism  
of unital $\ell$-groups

\begin{equation}
\label{equation:nox}
 K_0(\theta)\colon  K_0\left(\frac{\mathfrak M_1}{\mathfrak I}\right) \cong 
 K_0( \mathfrak M_1).
\end{equation}
 Since by \eqref{equation:sblocco},
  $$
 K_0\left(\frac{\mathfrak M_1}{\mathfrak I}\right) \cong
\frac{K_0(\mathfrak M_1)}{K_0(\mathfrak I)}=
\frac{(M_1,1)}{\mathfrak i},
 $$
 
 \smallskip
 \noindent
then 
 \eqref{equation:ko} and \eqref{equation:nox}
 yield an isomorphism
$$
K_0(\theta)\colon
\frac{(M_1,1)}{\mathfrak i}\cong 
(M_1,1)
$$    of unital dimension groups, which is automatically
 an isomorphism of  unital $\ell$-groups.
By our assumption on $K_0(\sigma)$,
 the composite map
$K_0(\theta\circ\sigma)=K_0(\theta)\circ K_0(\sigma)$ is a  
 homomorphism 
of 
$(M_1,1)$ onto   $(M_1,1)$ in the category of unital
$\ell$-groups.
By  Corollary
\ref{corollary:free-group},   $(M_1,1)$
is hopfian, and hence  
$K_0(\theta\circ\sigma)$ is an automorphism of
$(M_1,1)=K_0(\mathfrak M_1)$.
So both $K_0(\theta)$ and  $K_0(\sigma)$ are injective,
whence, by definition of $K_0$, so are  $\sigma$,
and $\theta\circ\sigma.$ It follows that
$\theta\circ\sigma$ is an automorphism of   $\mathfrak M_1.$
\end{proof}

\medskip
\begin{corollary}
\label{corollary:quotients}
For every primitive ideal  $\mathfrak P$ of $\mathfrak M_1$,
 the quotient
 C$^*$-algebra $\mathfrak M_1/\mathfrak P$ inherits the hopfian
property of $\mathfrak M_1$  of Theorem
\ref{theorem:af-hopf}.
This holds in particular for the Effros-Shen
C$^*$-algebras $\mathfrak F_\theta$ of \cite[\S 10]{eff} and 
for the Behnke-Leptin  C$^*$-algebras  $\mathcal A_{k,q}$  
of  \cite[pp. 330-331]{behlep}.
\end{corollary}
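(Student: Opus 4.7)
My plan is to run the argument of Theorem \ref{theorem:af-hopf} verbatim with $\mathfrak{Q}:=\mathfrak{M}_1/\mathfrak{P}$ in place of $\mathfrak{M}_1$. Given a short exact sequence $0\to\mathfrak{I}'\to\mathfrak{Q}\to\mathfrak{Q}/\mathfrak{I}'\to 0$ together with an isomorphism $\theta'\colon \mathfrak{Q}/\mathfrak{I}'\cong\mathfrak{Q}$ and the unital $\ell$-group hypothesis on $K_0(\sigma')$, Corollary \ref{corollary:finale} lets me lift $\mathfrak{I}'$ to a unique ideal $\mathfrak{I}\supseteq\mathfrak{P}$ of $\mathfrak{M}_1$, with $\mathfrak{Q}/\mathfrak{I}'\cong\mathfrak{M}_1/\mathfrak{I}$. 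Writing $\mathfrak{p}:=K_0(\mathfrak{P})$ and $\mathfrak{i}:=K_0(\mathfrak{I})$, two applications of \eqref{equation:sblocco} identify $K_0(\mathfrak{Q})\cong (M_1,1)/\mathfrak{p}$ and $K_0(\mathfrak{Q}/\mathfrak{I}')\cong (M_1,1)/\mathfrak{i}$. Bott periodicity, exactly as invoked in the proof of Theorem \ref{theorem:af-hopf}, promotes $K_0(\sigma')$ to a surjective unital $\ell$-group homomorphism, and composition with $K_0(\theta')$ produces a surjective unital $\ell$-group endomorphism $\tau$ of $(M_1,1)/\mathfrak{p}$. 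By the ideal correspondence of Corollary \ref{corollary:finale} applied to $\mathfrak{Q}$, injectivity of $\tau$ forces injectivity of $\theta'\circ\sigma'$ itself, and hence the desired automorphism.

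Everything therefore reduces to hopficity of $(M_1,1)/\mathfrak{p}$. Via the $\Gamma$-equivalence (Theorem \ref{theorem:gamma}) this is equivalent to hopficity of the finitely generated MV-algebra $A:=\McN(\interval)/\mathfrak{p}_\Gamma$, where $\mathfrak{p}_\Gamma$ corresponds to $\mathfrak{p}$. Because $\mathfrak{P}$ is primitive, $\mathfrak{p}_\Gamma$ is a prime MV-ideal of $\McN(\interval)$; and because $\McN(\interval)$ is the free \emph{one}-generator MV-algebra living over the one-dimensional space $\interval$, only finitely many primes of $\McN(\interval)$ can sit above any given $\mathfrak{m}_\xi$---namely $\mathfrak{m}_\xi$ itself together with its germinal primes at $\xi$. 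Consequently $\spec(A)$ is finite, and Theorem \ref{theorem:germ}(ii) delivers hopficity of $A$, with Theorem \ref{theorem:germ}(i) covering the sub-case $\mathfrak{p}_\Gamma$ maximal (where $A$ is simple). The named examples then drop out immediately: the Effros-Shen $\mathfrak{F}_\theta$ corresponds to a maximal primitive ideal (the simple case), while the Behnke-Leptin $\mathcal{A}_{k,q}$ corresponds to a non-maximal primitive ideal supported at the rational point $k/q$ (so $A$ has a two-step prime chain).

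The main obstacle is the identification, for primitive $\mathfrak{P}$, of $\mathfrak{p}_\Gamma$ as a prime MV-ideal of $\McN(\interval)$. This is a standard three-step chase through the AF C$^*$-algebra / dimension group / $\Gamma$ machinery---primitive C$^*$-ideals pair under $K_0$ with prime $\ell$-ideals of the dimension group, and prime $\ell$-ideals correspond under $\Gamma$ to prime MV-ideals---but the AF C$^*$-algebraic half of it is not developed in the earlier sections, so some care is needed. Once this identification is in hand, the rest of the proof is a routine application of the results already established.
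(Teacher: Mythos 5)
Your proposal is correct and follows essentially the same route as the paper: the paper likewise reduces to the observation that $K_0(\mathfrak M_1/\mathfrak P)$ is a finitely generated, totally ordered unital $\ell$-group with only finitely many (one or two) prime ideals, applies Corollary \ref{corollary:germ-group}(ii) (the $\Gamma$-translate of Theorem \ref{theorem:germ}(i)--(ii) that you invoke), and then transfers hopficity back through $K_0$ exactly as in the proof of Theorem \ref{theorem:af-hopf}. The only difference is presentational: the paper sidesteps your ``main obstacle'' (primitive ideals pairing with prime $\ell$-ideals under $K_0$) by directly citing the known classification of the primitive quotients of $\mathfrak M_1$.
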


\begin{proof}
Both $\mathfrak F_\theta$  and
$\mathcal A_{k,q}$  arise as primitive
quotients of $\mathfrak M_1$ and have only
a  finite    number  of primitive
quotients. The remaining primitive quotients
of $\mathfrak M_1$ are finite dimensional.
(See \cite[Theorem 3.1(i)]{mun-adv} and \cite[Corollary 3.3]{mun-mjm} for details.)
By Corollary \ref{corollary:finale},
the  unital dimension group  
$(G,u)=K_0(\mathfrak M_1/\mathfrak P)$
 is a (totally ordered) finitely generated unital $\ell$-group
with a finite number ($\in \{1,2\}$) of prime ideals. By 
Corollary  \ref{corollary:germ-group}(ii), $(G,u)$ is
hopfian.  Going backwards via $K_0$ as in the
proof of Theorem \ref{theorem:af-hopf}, we conclude
that  both $\mathfrak F_\theta$ and $\mathcal A_{k,q}$ 
have the desired hopfian property. 
\end{proof}

%%%%%%%%%%%%%%%%%%%%%%%%%%%%
%%%%%%%%%%%%%%%%%%%%%%%%%%%%
%%%%%%%%%%%%%%%%%%%%%%%%%%%% 
%%%%%%%%%%%%%%%%%%%%%%%%%%%%  
\section{Appendix: Background on MV-algebras}
\label{section:appendix}
%%%%%%%%%%%%%%%%%%%%%%%%%%%% 
%%%%%%%%%%%%%%%%%%%%%%%%%%%% 
%%%%%%%%%%%%%%%%%%%%%%%%%%%% 

Here we collect a number of
basic  MV-algebraic results which have been repeatedly used
in the previous sections.
Each  result  comes with a   reference   where
the interested reader can find  a proof.

\subsection{\it Representation}
\label{section:representation}

\begin{lemma}
\cite[1.2.8]{cigdotmun}
\label{1.2.8}
Let $A$ and $B$ be MV-algebras. If $\sigma$
is a homomorphism of $A$ onto $B$  
then there is an isomorphism $\tau$ 
of $A/\ker(\sigma)$ onto $B$
such that  $\tau(x/\ker(\sigma)) = \sigma(x)$ 
for all  $x\in A$.
\end{lemma}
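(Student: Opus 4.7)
The plan is to construct $\tau$ by the obvious rule $\tau(x/\ker(\sigma))=\sigma(x)$ and verify in turn that it is well-defined, a homomorphism, injective, and surjective. The heart of the argument is the first step, since everything else is then essentially formal.

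First I would recall that in any MV-algebra there is a natural bijection between ideals and MV-congruences: given an ideal $\mathfrak j$ of $A$, the relation $x\equiv_{\mathfrak j} y$ iff $d(x,y)\in\mathfrak j$ (where $d(x,y)=(x\ominus y)\oplus(y\ominus x)$ is the MV-algebraic distance term) is a congruence, and conversely every congruence arises from the ideal of elements equivalent to $0$. The quotient $A/\ker(\sigma)$ is by definition the quotient MV-algebra modulo this congruence. With this setup in hand, well-definedness reduces to the observation that if $x\equiv_{\ker(\sigma)}y$ then $d(x,y)\in\ker(\sigma)$, so $\sigma(d(x,y))=d(\sigma(x),\sigma(y))=0$ in $B$, and by the basic fact that $d(a,b)=0$ iff $a=b$ in any MV-algebra we conclude $\sigma(x)=\sigma(y)$. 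Hence $\tau$ is a well-defined map from $A/\ker(\sigma)$ to $B$.

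Next I would check that $\tau$ is a homomorphism: since the operations on the quotient are defined representative-wise, for any $x,y\in A$ we have
\begin{align*}
\tau((x/\ker(\sigma))\oplus(y/\ker(\sigma)))&=\tau((x\oplus y)/\ker(\sigma))=\sigma(x\oplus y)=\sigma(x)\oplus\sigma(y),\\
\tau(\neg(x/\ker(\sigma)))&=\tau((\neg x)/\ker(\sigma))=\sigma(\neg x)=\neg\sigma(x),
\end{align*}
and $\tau(0/\ker(\sigma))=\sigma(0)=0$, so $\tau$ preserves the MV-algebraic signature.

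For injectivity, suppose $\tau(x/\ker(\sigma))=\tau(y/\ker(\sigma))$, i.e.\ $\sigma(x)=\sigma(y)$. Then $\sigma(d(x,y))=d(\sigma(x),\sigma(y))=0$, so $d(x,y)\in\ker(\sigma)$, whence $x/\ker(\sigma)=y/\ker(\sigma)$. Surjectivity is immediate from the hypothesis that $\sigma$ is onto $B$: every $b\in B$ equals $\sigma(x)=\tau(x/\ker(\sigma))$ for some $x\in A$. The identity $\tau(x/\ker(\sigma))=\sigma(x)$ holds by construction.

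The only genuine obstacle is well-definedness, and even this is routine once the ideal-to-congruence correspondence is in place; for this reason the proof in \cite[1.2.8]{cigdotmun} suffices and no new ideas are required here.
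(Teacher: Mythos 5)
Your proof is correct and is the standard first isomorphism theorem argument via the ideal--congruence correspondence and the distance term $d(x,y)$; the paper itself gives no proof but simply cites \cite[1.2.8]{cigdotmun}, whose argument is exactly this one. Nothing further is needed.
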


\begin{theorem}
\label{theorem:representation}
(i)
\cite[Theorem 3.5.1]{cigdotmun} 
\label{theorem:simple}
An  MV-algebra  $A$    is simple iff it 
 is isomorphic 
to a subalgebra of the standard MV-algebra
$[0,1]$.

\medskip

(ii) 
   \cite[Theorem 9.1.5]{cigdotmun} 
 \label{theorem:mcngenfunction}
For each cardinal $\kappa$,
the  free MV-algebra $\it Free_\kappa$ on $\kappa$ 
free generators
is given by the {\em McNaughton functions
over   ${\rm [0,1]}^\kappa$,} 
with pointwise operations, i.e.,
those functions 
$g : [0,1]^{\kappa} \rightarrow [0,1]$ 
such that there are ordinals
$\alpha(0) < \ldots  < \alpha(m-1)  < \kappa$  
and   a McNaughton function  $f$
over  $[0,1]^m$  having the following property:
for each   ${ x} \in  [0,1]^{\kappa}$,
$g({ x}) = 
f(x_{\alpha(0)},\ldots, x_{\alpha(m-1)})$.

\medskip

(iii)  
\cite[Theorem 3.6.7]{cigdotmun}  
An MV-algebra  $A$ with
$\kappa$  generators  is 
semisimple iff 
for some nonempty closed subset
$X \subseteq [0,1]^\kappa$,
$A$  is isomorphic to the
MV-algebra  $\McN(X)$  of restrictions
to $X$ of all functions in
$Free_\kappa$.

\end{theorem}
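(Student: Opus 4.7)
\smallskip

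The plan is to prove the three parts in the order stated, with each part building on the previous one. All three are variations on the theme of representing MV-algebras by continuous $[0,1]$-valued functions, so the common thread will be Chang's subdirect representation theorem together with McNaughton's theorem.

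For part (i), I would first argue that a simple MV-algebra must be totally ordered. Since $A$ is simple, its only ideals are $\{0\}$ and $A$, so $\{0\}$ is its unique maximal ideal; by the general fact that $A/\mathfrak m$ is a chain for any maximal ideal $\mathfrak m$, $A$ itself is an MV-chain. Next I would show that a simple MV-chain is archimedean, i.e., has no nonzero infinitesimals: if $0 < \epsilon < x$ with $n\epsilon \leq x$ for every $n$, then the set of elements dominated by some $n\epsilon$ is a proper nonzero ideal, contradicting simplicity. The embedding into $[0,1]$ then follows from H\"older's theorem for archimedean totally ordered abelian groups, transferred through $\Gamma$: the unital $\ell$-group $(G,u) = \Gamma^{-1}(A)$ is a simple archimedean totally ordered abelian group with strong unit, hence unit-preservingly embeds into $(\mathbb R, 1)$, and restricting to unit intervals yields the desired embedding $A \hookrightarrow [0,1]$.

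For part (ii), this is McNaughton's theorem proper. In the finite case $\kappa = m < \omega$, the easy direction is that every MV-algebra term $t(x_1,\ldots,x_m)$ interpreted on $[0,1]^m$ is a continuous piecewise linear function with integer-coefficient linear pieces: one inducts on the structure of $t$, using that the operations $\neg x = 1-x$ and $x \oplus y = \min(1, x+y)$ preserve this class. The hard direction---every McNaughton function is a term---is the main obstacle. My plan would be to use a triangulation argument: given a McNaughton function $f$ on $[0,1]^m$, choose a unimodular triangulation refining its linearity domains, and for each vertex $v$ of the triangulation use the Schauder hat at $v$ (itself constructible as a term) to build $f$ up piece by piece as a finite combination of such hats via $\oplus$, truncated subtraction, and scalar multiples. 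This is where all the combinatorial work lies. The extension to infinite $\kappa$ is then easy: every term involves only finitely many free generators, so $Free_\kappa$ is the direct limit of its finitely generated free subalgebras, and each element is described by a McNaughton function depending on finitely many coordinates $\alpha(0) < \cdots < \alpha(m-1)$.

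For part (iii), I would combine (i), (ii), and Chang's subdirect representation theorem. Semisimplicity of $A$ means $\bigcap \maxspec(A) = \{0\}$, so the canonical map $A \hookrightarrow \prod_{\mathfrak m} A/\mathfrak m$ is injective; by (i), each $A/\mathfrak m$ embeds into $[0,1]$. Letting $\{g_\alpha : \alpha < \kappa\}$ be a generating set of $A$, the evaluation map $\mathfrak m \mapsto (g_\alpha/\mathfrak m)_{\alpha < \kappa}$ sends $\maxspec(A)$ into $[0,1]^\kappa$. Let $X \subseteq [0,1]^\kappa$ be the image; standard spectral arguments (continuity of evaluation and compactness of $\maxspec(A)$) show $X$ is closed and the evaluation is a homeomorphism onto $X$. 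Restricting McNaughton functions on $[0,1]^\kappa$ to $X$ produces $\McN(X)$, and by (ii) every element of $A$ is a term in the $g_\alpha$, hence corresponds to a McNaughton function on $[0,1]^\kappa$ restricted to $X$; this gives a surjection $Free_\kappa \twoheadrightarrow A$ that factors through $\McN(X)$. The resulting homomorphism $\McN(X) \to A$ is injective because $A$ is semisimple and $X$ separates the maximal ideals by construction, yielding the claimed isomorphism. The converse---$\McN(X)$ is semisimple and $\kappa$-generated---is immediate from the point evaluations and the coordinate functions.
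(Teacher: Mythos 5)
The paper does not actually prove this theorem: it is quoted in the Appendix as background, with each part cited to \cite{cigdotmun} (Theorems 3.5.1, 9.1.5 and 3.6.7), and your outline follows essentially the same route as the proofs given there---H\"older's theorem transferred through $\Gamma$ for (i), the unimodular-triangulation/Schauder-hat proof of McNaughton's theorem plus a direct-limit argument for (ii), and the subdirect representation over $\maxspec(A)$ with the evaluation map at a generating set for (iii). The only points left open in your proposal are the (easy) converse of (i)---a subalgebra of $[0,1]$ is simple because every nonzero $a$ satisfies $a\oplus\cdots\oplus a=1$ for enough summands, so the only proper ideal is $\{0\}$---and the combinatorial core of the hard direction of (ii), which you correctly locate but explicitly do not carry out.
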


\medskip 
\subsection{\it Yosida duality}
\label{section:Yosida}

%%%%%%%4.15
%\begin{proposition} 
%{\rm  (\cite[Proposition 4.15]{mun11})} 
%  \label{proposition:maxspec}
%For any MV-algebra
%$A$, the maximal spectral space
%$\maxspec(A)$ is a non\-em\-p\-ty compact Hausdorff space.  
%\end{proposition}

%%%%%%4.16
For any nonempty compact
Hausdorff space $X\not=\emptyset$
we let $C(X)$
denote the MV-algebra of all continuous $[0,1]$-valued
functions on $X$, with the pointwise
operations of the  MV-algebra $[0,1]$.

An MV-subalgebra $A$\ of $C(X)$\ is said to be  
{\em separating\/}
if  for any two distinct points 
$x,y\in X$, there is 
$f \in A$\ such that 
$f(x) = 0$\ and $f(y) > 0$.
Following \cite{mun11}, for
each ideal $\mathfrak i$ of $A$ we let
$$
{\mathcal Z_\mathfrak i}=\bigcap\{f^{-1}(0)\mid f\in \mathfrak i\}.
$$
($\mathcal Z_\mathfrak i$ is denoted $V_\mathfrak i$
in \cite{cigdotmun}.)

\begin{proposition}
\cite[Proposition 3.4.5]{cigdotmun}
\label{proposition:3.4.5}
 Let $X$\ be 
a compact Hausdorff 
space and $A$\ be a separating 
subalgebra of $Cont (X)$. 
Then the map 
$f/\mathfrak i \mapsto f\restrict{\mathcal Z_\mathfrak i}$\ is 
an isomorphism of $A/\mathfrak i$\ onto 
$A\restrict {\mathcal Z_\mathfrak i}$\ 
if and only if  \,$\mathfrak i$\ is 
an intersection of maximal ideals of $A$.
\end{proposition}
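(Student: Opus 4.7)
The plan is to reduce the stated equivalence to a comparison of $\mathfrak i$ with the ideal $\mathfrak j := \{f \in A \mid f\restrict{\mathcal Z_{\mathfrak i}} = 0\}$. By definition of $\mathcal Z_{\mathfrak i}$ one has $\mathfrak i \subseteq \mathfrak j$, and the restriction map $r\colon A \to A\restrict{\mathcal Z_{\mathfrak i}}$, $f \mapsto f\restrict{\mathcal Z_{\mathfrak i}}$, is a surjective homomorphism with $\ker(r) = \mathfrak j$. Applying Lemma \ref{1.2.8} to $r$, the induced map $f/\mathfrak i \mapsto f\restrict{\mathcal Z_{\mathfrak i}}$ is an isomorphism onto $A\restrict{\mathcal Z_{\mathfrak i}}$ if and only if $\mathfrak i = \mathfrak j$. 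The key preliminary observation is that $\mathfrak j$ has a manifest presentation as an intersection of especially simple ideals: $\mathfrak j = \bigcap_{x \in \mathcal Z_{\mathfrak i}} \mathfrak m_x$, where $\mathfrak m_x := \{f \in A \mid f(x) = 0\}$ is the kernel of the evaluation $A \to \interval$, $f \mapsto f(x)$. Since the image of this evaluation is a subalgebra of the standard MV-algebra $\interval$, Theorem \ref{theorem:representation}(i) tells us $A/\mathfrak m_x$ is simple, so each $\mathfrak m_x$ is a maximal ideal of $A$.

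With this setup in place the forward direction is immediate: if the induced map is an isomorphism then $\mathfrak i = \mathfrak j = \bigcap_{x \in \mathcal Z_{\mathfrak i}} \mathfrak m_x$, which displays $\mathfrak i$ as an intersection of maximal ideals.

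For the converse, assume $\mathfrak i = \bigcap_\alpha \mathfrak m_\alpha$ with each $\mathfrak m_\alpha$ maximal; the task is to show $\mathfrak j \subseteq \mathfrak i$. The decisive step will be to realize every $\mathfrak m_\alpha$ as a point-evaluation $\mathfrak m_{x_\alpha}$: any point $x_\alpha \in \mathcal Z_{\mathfrak m_\alpha}$ satisfies $\mathfrak m_\alpha \subseteq \mathfrak m_{x_\alpha}$, and because $\mathfrak m_{x_\alpha}$ is proper (the constant $1 \in A$ has $1(x_\alpha) = 1 \ne 0$), maximality of $\mathfrak m_\alpha$ forces $\mathfrak m_\alpha = \mathfrak m_{x_\alpha}$. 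The hard part is therefore showing $\mathcal Z_{\mathfrak m_\alpha} \ne \emptyset$, and for this I would combine compactness of $X$ with an MV-theoretic argument: were the compact zero sets $f^{-1}(0)$ ($f \in \mathfrak m_\alpha$) to have empty intersection, by compactness finitely many of them would already, whence $h := f_1 \vee \cdots \vee f_k$ lies in $\mathfrak m_\alpha$ and is bounded below by some $\epsilon > 0$ on $X$; then the iterated MV-sum $h \oplus \cdots \oplus h$ (taken more than $1/\epsilon$ times) equals the constant $1$, placing $1 \in \mathfrak m_\alpha$ and contradicting properness.

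Once $\mathfrak m_\alpha = \mathfrak m_{x_\alpha}$ is secured, the argument closes cleanly: from $\mathfrak i \subseteq \mathfrak m_{x_\alpha}$ one gets $\mathcal Z_{\mathfrak m_{x_\alpha}} \subseteq \mathcal Z_{\mathfrak i}$, so $x_\alpha \in \mathcal Z_{\mathfrak i}$, and any $f \in \mathfrak j$ then satisfies $f(x_\alpha) = 0$ for every $\alpha$, yielding $f \in \bigcap_\alpha \mathfrak m_{x_\alpha} = \mathfrak i$. Thus the whole proof rests on the Yosida-type identification of maximal ideals of a separating subalgebra of continuous functions with point-evaluation ideals; everything else reduces to routine bookkeeping with the equalities $\mathfrak j = \bigcap_{x \in \mathcal Z_{\mathfrak i}} \mathfrak m_x$ and $\mathcal Z_{\mathfrak m_x} \supseteq \{x\}$.
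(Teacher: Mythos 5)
Your proof is correct, and since the paper itself gives no proof of this proposition (it is quoted verbatim from \cite[Proposition 3.4.5]{cigdotmun}), the right comparison is with the standard argument there, which is essentially what you wrote: reduce everything to the equality $\mathfrak i=\mathfrak j$ via the first isomorphism theorem (Lemma~\ref{1.2.8}), and identify each maximal ideal with a point-evaluation kernel $\mathfrak m_x$ by the compactness-plus-$\oplus$ argument. The only point worth making explicit is that properness of $\mathfrak i$ (the paper's standing convention) is what forces $\mathcal Z_{\mathfrak i}\neq\emptyset$ in the forward direction, so that $\mathfrak i=\bigcap_{x\in\mathcal Z_{\mathfrak i}}\mathfrak m_x$ is genuinely a nonempty intersection of maximal ideals; note also that your argument never uses the separating hypothesis, which is needed elsewhere in the Yosida machinery but not for this equivalence.
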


For every MV-algebra $A$, we let
$\hom(A)$ denote the set of homomorphisms
of $A$ into the standard MV-algebra $[0,1]$.

%%%%3.4.3 
%\begin{theorem}
%\cite[Theorem 3.4.3]{cigdotmun} 
%\label{genmax=points}
%Let $X$\ be a compact Hausdorff space 
%and $A$\ be a separating 
%subalgebra of the MV-algebra $C(X)$
%of all $[0,1]$-valued continuous functions on $X$.  
% Then the map $x \mapsto \mathfrak m_x
%= \{f\in A\mid f(x)=0\}$\ is a one-one
%correspondence between  $X$\ and  
%${\maxspec} (A)$.
%\end{theorem}

\begin{theorem}
 {\rm (Yosida duality, \cite[Theorem 4.16]{mun11})}
\label{theorem:semisimple}  Let $A$ be an MV-algebra.

\medskip
$(i)$ For any maximal ideal $\mathfrak m $ of $A$ there is a unique
pair $(\overline{\mathfrak m}, I_{\mathfrak m})$ with
$I_{\mathfrak m}$ an MV-subalgebra of $\interval$ and 
 $\overline{\mathfrak m}$  an isomorphism of   $A/\mathfrak m$
onto $I_{\mathfrak m}$.

\medskip
(ii)  The map $\ker\colon \eta \mapsto \ker \eta$
 is a one-one correspondence between $\hom(A)$ and
$\maxspec(A)$.  The inverse map sends each 
$\mathfrak m\in \maxspec(A)$ to the  homomorphism
$\eta_{\mathfrak m}\colon A\to \interval$ given by
$a \mapsto \overline{\mathfrak m}(a/\mathfrak m)$.
For each  $\theta\in \hom(A)$ and $a\in A$,
%\begin{equation}
%\label{equation:abete}
$\theta(a)=\overline{\ker\theta}\left({a}/{\ker\theta}\right).$
%\end{equation}

\medskip
  $(iii)$ 
 The map $^*\colon  a \in A
\mapsto a^* \in [0,1]^{{\maxspec}(A)}$ defined by 
$a^*(\mathfrak m) =
\overline{\mathfrak m}(a/\mathfrak m),$ 
is a homomorphism of $A$ onto
a separating MV-subalgebra $A^{*}$ of $C({\maxspec}(A))$.  
%The basic closed sets of
%${\maxspec}(A)$ have the form 
%$ 
%\mathsf{F}_b = \{{\mathfrak m} \in {\maxspec}(A) \mid
%b^*({\mathfrak m}) = 0\}, 
%$
%where $b$ ranges over all
%elements of $A$.
The map $a\mapsto a^*$ is an isomorphism of $A$
onto $A^*$
iff $A$ is semisimple.

\medskip
(iv) \,Suppose $X\not=\emptyset$ 
is a compact Hausdorff space and $B$ is a
separating subalgebra of $C(X)$.  Then the map 
$\iota\colon x\in
X\mapsto \mathfrak m_x =
\{f\in B \mid f(x)=0\}$
is a homeomorphism of $X$ onto
${\maxspec}(B)$.  The inverse map $\iota^{-1}$  sends each
$\mathfrak m\in \maxspec(B)$ to the only element
  of the set
${\mathcal Z}\mathfrak m$.

\medskip
(v)  From the hypotheses of (iv) it follows that
$
f^*\circ\iota= f
$
 for each $f\in B.$
Thus the map  $f^*\in B^*\mapsto f^*\circ\iota\in C(X)$
is the inverse of the isomorphism  $^{*}\colon B\cong B^*$
defined in (iii). In particular,  $f(x)=f^*(\mathfrak m_{x})$
for each  $x\in X$.

\medskip
(vi)     \cite[Corollary 4.18]{mun11} 
   % \label{corollary:maxspec} 
    For every nonempty closed
subset $Y$ of $\interval^\kappa$, the map 
$\iota \colon x \in Y \mapsto \mathfrak m_{x}
=\{f \in \McN(Y) \mid f(x)=0\}$ of  (iv) 
is a homeomorphism of $Y $ onto
${\maxspec}(\McN(Y))$.  The inverse map $\mathfrak m\mapsto
x_{\mathfrak m}$ sends every maximal ideal 
$\mathfrak m$ of $\McN(Y)$
to the only element   of ${\mathcal
Z}\mathfrak m$. 
\end{theorem}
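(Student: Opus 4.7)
The plan is to prove the six parts in sequence, using each to bootstrap the next.

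For part (i), since $\mathfrak m$ is maximal the quotient $A/\mathfrak m$ is simple, so by Theorem \ref{theorem:representation}(i) it admits an embedding into the standard MV-algebra $[0,1]$; I take $I_\mathfrak m$ to be the image and $\overline{\mathfrak m}$ the induced isomorphism. Uniqueness follows because any MV-homomorphism of an MV-subalgebra of $[0,1]$ into $[0,1]$ is the inclusion: it must fix $0$ and $1$, hence by the Archimedean property and preservation of the MV-operations (which determine the order), it fixes every element. Part (ii) is then a formal consequence: given $\eta\in\hom(A)$, $A/\ker\eta$ embeds into $[0,1]$, hence is simple, so $\ker\eta\in\maxspec(A)$; conversely, for $\mathfrak m\in\maxspec(A)$ the composite $\overline{\mathfrak m}\circ q_\mathfrak m$ of the quotient with the isomorphism from (i) gives the required $\eta_\mathfrak m$. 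The two constructions are mutually inverse by the uniqueness clause in (i), and the last displayed identity is just unravelling of definitions.

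For part (iii), I define $a^*$ pointwise and note that because $\overline{\mathfrak m}$ is a homomorphism for each $\mathfrak m$, the assignment $a\mapsto a^*$ is a homomorphism into the product $[0,1]^{\maxspec(A)}$. Continuity of $a^*$: the preimage of $\{0\}$ under $a^*$ is precisely $\mathsf B_a$, a subbasic closed set, and by a standard manipulation (using that $a^{*}(\mathfrak m)\le r$ iff $((r+\varepsilon)\ominus a)^{*}(\mathfrak m)>0$ type conditions can be rewritten via MV-terms) one shows $a^*\in C(\maxspec(A))$. Separation of $A^*$: if $\mathfrak m\ne\mathfrak n$ pick $a\in\mathfrak m\setminus\mathfrak n$; then $a^*(\mathfrak m)=0$ while $a^*(\mathfrak n)\ne 0$. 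The kernel of $a\mapsto a^*$ equals $\bigcap\maxspec(A)$ (the radical), so injectivity is equivalent to semisimplicity.

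Part (iv) is the substantive step. The map $\iota(x)=\mathfrak m_x$ lands in $\maxspec(B)$ because $\mathfrak m_x=\ker(\operatorname{ev}_x)$ with $\operatorname{ev}_x\colon B\to [0,1]$, so (ii) applies. Injectivity follows from $B$ being separating. For surjectivity, fix a maximal ideal $\mathfrak m$ of $B$ and consider $\mathcal Z_\mathfrak m=\bigcap_{f\in\mathfrak m}f^{-1}(0)$; if this were empty, the open cover $\{X\setminus f^{-1}(0)\}_{f\in\mathfrak m}$ of the compact $X$ would admit a finite subcover, producing $f_1,\dots,f_n\in\mathfrak m$ with $f_1\vee\cdots\vee f_n\ge\varepsilon>0$ on $X$; then some finite $\oplus$-sum of this element equals $1$, forcing $1\in\mathfrak m$ and contradicting properness. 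So $\mathcal Z_\mathfrak m\ne\emptyset$; any $x\in\mathcal Z_\mathfrak m$ gives $\mathfrak m\subseteq\mathfrak m_x$, and maximality forces equality, which also shows $\mathcal Z_\mathfrak m$ is a singleton. Bicontinuity of $\iota$ follows because $\iota^{-1}(\mathsf B_f)=f^{-1}(0)$ and conversely, matching the two subbases of closed sets. Part (v) is a direct calculation: $f^*(\iota(x))=\overline{\mathfrak m_x}(f/\mathfrak m_x)=\operatorname{ev}_x(f)=f(x)$, using that $\operatorname{ev}_x$ factors uniquely as $\overline{\mathfrak m_x}\circ q_{\mathfrak m_x}$ by (i). Finally, part (vi) is an application of (iv) with $X=Y$ and $B=\McN(Y)$, once one observes (via Theorem \ref{theorem:representation}(ii) and McNaughton-function constructions) that $\McN(Y)$ is a separating subalgebra of $C(Y)$.

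The main obstacle is the surjectivity step in (iv): translating the abstract maximality of $\mathfrak m$ into nonemptiness of $\mathcal Z_\mathfrak m$ requires combining compactness of $X$ with the MV-algebraic fact that a strictly positive continuous function on $X$ generates the improper ideal in finitely many $\oplus$-steps. Everything else is either a formal unravelling or a direct bijection/continuity check once (i)--(iv) are in place.
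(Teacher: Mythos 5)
The paper itself offers no proof of this statement: it is quoted as background (Yosida duality, \cite[Theorem 4.16]{mun11}) in the appendix, whose results are explicitly referred to the literature. So the only thing to assess is your reconstruction, which follows the standard textbook route: simplicity of $A/\mathfrak m$ plus the rigidity of MV-subalgebras of $[0,1]$ for (i)--(ii), the radical for the kernel computation in (iii), and the compactness argument showing $\mathcal Z_{\mathfrak m}\neq\emptyset$ (finite subcover, then an $n$-fold $\oplus$-sum of $f_1\vee\dots\vee f_n$ equal to $1$) for the surjectivity of $\iota$ in (iv). All of that is sound, and (v) and (vi) are indeed the formal consequences you describe.

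One step is genuinely too quick. In (iv) you deduce bicontinuity of $\iota$ from ``$\iota^{-1}(\mathsf{B}_f)=f^{-1}(0)$ and conversely, matching the two subbases of closed sets.'' The identity $\iota^{-1}(\mathsf{B}_f)=f^{-1}(0)$ does give continuity of $\iota$, but continuity of $\iota^{-1}$ would require that the zero sets $f^{-1}(0)$, $f\in B$, form a basis of closed sets of $X$, and for an arbitrary separating MV-subalgebra $B$ of $C(X)$ this is not immediate: separation only provides, for each pair $x\neq y$, one function vanishing at $x$ and positive at $y$, not a function vanishing on a prescribed closed set and positive at a prescribed point outside it. The standard repair bypasses this entirely: $\maxspec(B)$ is Hausdorff (a routine consequence of the primality of maximal ideals, as also recorded in the paper for unital $\ell$-groups via \cite[10.2.2]{bkw}), and a continuous bijection from the compact space $X$ onto a Hausdorff space is automatically a homeomorphism. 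With that substitution, and granting the routine verification you only sketch for the continuity of $a^*$ in (iii) (rewriting $\{\mathfrak m: a^*(\mathfrak m)\leq r\}$ and $\{\mathfrak m: a^*(\mathfrak m)\geq r\}$ for rational $r$ as sets of the form $\mathsf{B}_b$ for suitable MV-terms $b$ in $a$), your argument is a complete and faithful proof of the cited theorem.
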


 %%%%%%%%%%%%%4.17, 4.18
%\begin{remark}
 
 \noindent
 {\it Notational Stipulations.\,\,}
 In the light of 
 Theorem \ref{theorem:semisimple}(i)-(iii), for 
  every MV-algebra  $A$, 
$a\in A$ and $\mathfrak m\in \maxspec(A)$
we will tacitly identify  
$a/\mathfrak m$  with the  real
number  $\overline{\mathfrak m}(a/\mathfrak m)$,
and write 
%
%\begin{equation} 
%    \label{equation:unique-isomorphism} 
$    a/\mathfrak m  =
\overline{\mathfrak m}(a/\mathfrak m)=
a^*(\mathfrak m). 
$
%\end{equation}
%For $\theta\in \hom(A)$ and $a\in A$ we will also write
%%\begin{equation}
%%    \label{equation:serve-per-tensor}
%  $  \theta(a)= {a}/{\ker \theta}.$
    %\end{equation}
%% 
 % Further, when $A$ is semisimple 
 % we will freely
 % use the identification
 % \begin{equation} 
 %     \label{equation:unique-isomorphism}
 % a/\mathfrak m = a^*(\mathfrak m).
 % \end{equation} 
 %%
Further, if 
$B$ is a separating MV-subalgebra
of $C(X)$ as in 
 {{}Theorem 
\ref{theorem:semisimple}(iv)-(v)},
identifying 
    $B$ with $B^*$  and
    $X$ with $\maxspec(B)$, we will  
    write without fear of ambiguity, 
%\begin{equation}
%    \label{equation:hion}
  $  f(x)=f(\mathfrak m_x) = f/\mathfrak m_x$
%\end{equation}
    for each  $x\in X$  and $f\in B$.

\bigskip
   \subsection{\it The $\Gamma$ functor}
    \label{section:gamma}
    
    \begin{theorem}
    \label{theorem:gamma}
  (i)     \cite[Theorem 3.9]{mun-jfa}.
  For each
    unital $\ell$-group  $(G,u)$ let $\Gamma(G,u)$  
    be  the MV-algebra
    $([0,1],0,\neg,\oplus)$ where  $\neg x=u-x$
    and $x\oplus y=\min(u,x+y).$  Further, for every
     homomorphism
    $\xi\colon (G,u)\to(H,v)$ let  
    $\Gamma(\xi)$  be the restriction of $\xi$
    to $[0,u]$.  Then  $\Gamma$ is a categorical
    equivalence between unital $\ell$-groups and
    MV-algebras.

\medskip
(ii) \cite[Theorem 7.2.2]{cigdotmun}. 
Let   $A =\Gamma({G},{u})$. Then the  
correspondence     
$
\phi\colon \mathfrak i \mapsto\phi (\mathfrak i)
= \{x\in G\, \mid \, |x| \wedge u \in \mathfrak i \}
$
\noindent is an order-isomorphism 
from the set 
%${\mathcal I}(A)$ 
of ideals of $A$ onto the set of ideals of $G$,
both sets being ordered by inclusion.
 The inverse 
isomorphism $\psi$ is given by     
$\psi\colon \mathfrak j  \mapsto \psi (\mathfrak j) 
= \mathfrak j \cap [0,u]$.  

\medskip
(iii)   \cite[Theorem 7.2.4]{cigdotmun}.
 For every ideal
$\mathfrak j$\  of $G,$ \,\,\,
$
\Gamma({G/\mathfrak j},{u/\mathfrak j})
$ is isomorphic to the quotient MV-algebra
$
 \Gamma({G},{u})/(\mathfrak j \cap [0,u]).
$
 \end{theorem}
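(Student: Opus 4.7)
The plan is to build an essential inverse of $\Gamma$ via Mundici's \emph{good sequences}, and then deduce (ii) and (iii) from the resulting equivalence. For (i), given an MV-algebra $A$, I would define a good sequence to be a finite tuple $\mathbf{a} = (a_1, \dots, a_n)$ in $A$ satisfying $a_i \oplus a_{i+1} = a_i$ for all $i$, padded by $0$ thereafter. Good sequences form an abelian cancellative monoid under a staggered componentwise sum built from $\oplus$; passing to its Grothendieck group $G_A$, transporting the natural partial order, and declaring $u$ to be the class of $(1, 0, 0, \dots)$, one obtains a unital $\ell$-group. One then verifies that $[0, u]$ is in bijection with $A$ via $a \mapsto (a, 0, 0, \dots)$, and that under this bijection $u - (\cdot)$ corresponds to $\neg$ and truncated addition corresponds to $\oplus$, so that $\Gamma(G_A, u) \cong A$. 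Conversely, every positive element of a unital $\ell$-group decomposes as $b_1 + \dots + b_n$ with $b_i \in [0,u]$, and this decomposition encodes a good sequence in $\Gamma(G, u)$, giving the other half of the equivalence. Functoriality is immediate once one checks that an MV-homomorphism lifts uniquely to a unital $\ell$-homomorphism on good sequences.

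For (ii), assuming the equivalence is in hand, I would verify directly that $\phi(\mathfrak i) = \{x \in G : |x| \wedge u \in \mathfrak i\}$ is an $\ell$-ideal: closure under addition reduces to the inequality $(|x|+|y|) \wedge u \le (|x| \wedge u) \oplus (|y| \wedge u)$ inside $[0,u]$, and closure under the lattice operations is automatic from the definition of $\mathfrak{i}$. Conversely, $\mathfrak j \cap [0,u]$ is an MV-ideal because $\mathfrak j$ is a convex $\ell$-subgroup containing $0$. The two maps are mutually inverse: for an $\ell$-ideal $\mathfrak j$, every $x \in \mathfrak j$ satisfies $|x| \in \mathfrak j$, and the good-sequence decomposition places every component of $|x|$ inside $\mathfrak j \cap [0,u]$, whence $x \in \phi(\psi(\mathfrak j))$; the reverse containment and order preservation are routine.

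For (iii), functoriality of $\Gamma$ applied to the quotient projection $G \to G/\mathfrak j$ yields an MV-homomorphism $\Gamma(G,u) \to \Gamma(G/\mathfrak j, u/\mathfrak j)$ whose kernel is $\mathfrak j \cap [0,u]$. Surjectivity holds because every class in $[0, u/\mathfrak j]$ admits a representative in $[0,u]$ (subtract a suitable element of $\mathfrak j$ and clip with $\vee\, 0$ and $\wedge\, u$), and the first MV-isomorphism theorem (Lemma \ref{1.2.8}) then delivers the asserted isomorphism $\Gamma(G/\mathfrak{j}, u/\mathfrak{j}) \cong \Gamma(G,u)/(\mathfrak j \cap [0,u])$.

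The main technical obstacle is proving in (i) that the staggered componentwise operation on good sequences is well-defined, commutative, and cancellative; this requires systematic use of Chang's MV-axioms (in particular the identity $\neg(\neg x \oplus y) \oplus y = \neg(\neg y \oplus x) \oplus x$ and its consequences), together with verifying that the resulting positive cone carries a translation-invariant lattice order with respect to which the distinguished element $u$ is archimedean. Once this combinatorial-algebraic construction is carried out and shown to be functorial, parts (ii) and (iii) reduce to formal checks on ideals and quotients.
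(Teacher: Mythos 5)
The paper offers no proof of this theorem: it appears in the appendix purely as a citation of \cite[Theorem 3.9]{mun-jfa} and \cite[Theorems 7.2.2 and 7.2.4]{cigdotmun}, and the argument in those references is precisely the good-sequence construction you outline, so your proposal is correct and follows essentially the same (standard) route. Two points worth tightening if you write it out in full: the staggered sum of good sequences needs the dual operation $x\odot y=\neg(\neg x\oplus\neg y)$ as well as $\oplus$ (the $i$th entry of $\mathbf a+\mathbf b$ is $a_i\oplus(a_{i-1}\odot b_1)\oplus\cdots\oplus(a_1\odot b_{i-1})\oplus b_i$), and in (ii) the genuinely non-routine direction is $\phi(\psi(\mathfrak j))\subseteq\mathfrak j$, which rests on the inequality $a\le n(a\wedge u)$ for $0\le a\le nu$ --- equivalently, on the fact that every entry of the canonical good sequence of $a$ is dominated by $a\wedge u$ --- rather than on the direction where you invoke the decomposition.
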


 \bibliographystyle{plain}
%%%%%%%%%%%%%

 \end{document}